\newtheorem{theorem}{Theorem}
\theoremstyle{plain}
\newtheorem{corollary}{Corollary}
\newtheorem{definition}{Definition}
\newtheorem{lemma}{Lemma}
\newtheorem{proposition}{Proposition}
\newtheorem{remark}{Remark}
\DeclareMathOperator{\Div}{div}
 \numberwithin{equation}{section}
 \numberwithin{theorem}{section}
 \numberwithin{proposition}{section}
 \numberwithin{remark}{section}
 \numberwithin{definition}{section}
 \numberwithin{lemma}{section}
 \numberwithin{corollary}{section}
 \numberwithin{example}{section}
 \numberwithin{claim}{section}
\begin{document}
\title[Multiscale nonlocal flow ]{Multiscale nonlocal flow in a fractured
porous medium}
\author{Jean Louis Woukeng}
\address{Department of Mathematics and Computer Science, University of
Dschang, P.O. Box 67, Dschang, Cameroon}
\email{jwoukeng@yahoo.fr}
\date{}
\subjclass[2000]{35B27, 76Bxx, 76D05}
\keywords{Multiscale convergence, Oldroyd equations, double-porosity medium,
convolution}

\begin{abstract}
We study the flow generated by an incompressible viscoelastic fluid in a
fractured porous medium. The model consists of a fluid flow governed by
Stokes-Volterra equations evolving in a periodic double-porosity medium.
Using the multiscale convergence method associated to some recent tools
about the convergence of convolution sequences, we show that the equivalent
macroscopic model is of the same type as the microscopic one, but in a fixed
domain.
\end{abstract}

\maketitle

\section{Introduction}

The Stokes equations have been for a long time widely used to describe the
flow at moderate velocity of incompressible viscous fluids. However, models
of viscoelastic fluids have been proposed in the twentieth century. Some of
these models take into account the history of the flow and are not subject
to the Newtonian effects. Among these models is the one proposed by Oldroyd 
\cite{Oldroyd} in 1956, and which is commonly known as \emph{Oldroyd model}.
For details about the physical background and its mathematical modelling, we
refer e.g., to Oldroyd \cite{Oldroyd}, Joseph \cite{Joseph}, Oskolkov \cite%
{Oskolkov, Oskolkov1}, Agranovich et al. \cite{Agra} and Sobolevskii \cite%
{Sobo}.

In this work we focus on flows of an incompressible viscoelastic fluid of
Oldroyd type in a multiscale porous medium. The equations of motion arising
from that model give rise to a system of integro-differential equations of
Volterra-Stokes type (\ref{2.1}) and (\ref{2.3}) below; see e.g., \cite%
{Oskolkov, Oskolkov1}. The main motivation for our study is twofold: First,
the specific structure of the medium; second, the flow equations. Let us
clarify this below.

(1) \emph{The specific structure of the porous medium}. In 1990, Arbogast,
Douglas and Hornung \cite{ADH} developed a double porosity geometry which
has been studied since then by many researchers. Very recently in 2010 and
in 2012, Meirmanov \cite{Meir10, Meir12} showed that the model in \cite{ADH}%
\ was not \emph{"physically correct"} and then proposed the physically
correct double porosity geometry which distinguishes the pores size from the
cracks size and hence leads to \emph{reiterated} homogenization; see also 
\cite{Panfilov13} for a very recent work about the periodic multiscale
fractured porous media. Our approach of modelling the geometry of the domain
will be a blend of the methods used in \cite{Meir10, Meir12} and \cite%
{Panfilov13, SSW, SW1}. More precisely, the medium represents a system of
porous permeable blocks surrounded by an interconnected system of cracks
(fractures), the blocks being less permeable than the fractures. Moreover
the pores contained in the blocks are intertwined, and the system consisting
of both pores and fractures is interconnected. This is therefore a typical
fractured porous medium as defined earlier in \cite{Meir10, Meir12,
Panfilov13, SSW, SW1}. In such media, the blocks behave as sources of fluid
alimenting the fissures (fractures) which are characterized by substantially
higher flow rates and lower relative volume. For that reason, the average
flow in the block is always delayed with respect to the flow in the
fractures. This phenomenon is analytically characterized by the appearance
of memory in the resulting model equation, which is the justification of the
main physical expected feature of such media. To be more precise, the domain
is described as follows.

Let $N\geq 3$ be an integer. Let $Y=[0,1)^{N}=\overline{Y}_{1}\cup Y_{2}$
where $Y_{1}$ and $Y_{2}$ are two disjoint open sets representing the local
structure of the porous matrix for $Y_{1}$, and the local structure of
cracks for $Y_{2}$. We assume that $Y_{2}$ is connected and that the
boundary of $Y_{1}$ is Lipschitz continuous. We set $G_{1}=\cup _{k\in 
\mathbb{Z}^{N}}(k+Y_{1})$ and $G_{2}=\mathbb{R}^{N}\backslash \overline{G}%
_{1}$. Next, let $Z_{1}$ and $Z_{2}$ be two disjoint open subsets of $Y_{1}$
such that $Y_{1}=\overline{Z}_{1}\cup Z_{2}$, $Z_{2}$ being connected. $%
Z_{1} $ is the local structure of the skeleton while $Z_{2}$ is the one of
the pores. Set $H_{1}=\cup _{k\in \mathbb{Z}^{N}}(k+Z_{1})$ and $%
H_{2}=G_{1}\backslash \overline{H}_{1}$. $H_{2}$ is open and connected,
representing the effective pore space. The crack space $G_{2}$ is also
connected. Finally we assume that $Y_{2}$ and $Z_{2}$ have positive Lebesgue
measure.

With this in mind, let $\Omega $ be an open bounded connected subset of $%
\mathbb{R}^{N}$ with Lipschitz boundary $\partial \Omega $, and let $%
\varepsilon >0$ be a small parameter. We define the fluid domain to be the
union of both pores and cracks domains as follows. First set $\Omega
_{p}^{\varepsilon }=\Omega \cap \varepsilon ^{2}H_{2}$ (the pores domain), $%
\Omega _{c}^{\varepsilon }=\Omega \cap \varepsilon G_{2}$ (the cracks
domain), and define the fluid domain $\Omega _{2}^{\varepsilon }=\Omega
_{p}^{\varepsilon }\cup \Omega _{c}^{\varepsilon }$, and the skeleton $%
\Omega _{1}^{\varepsilon }=\Omega \backslash \overline{\Omega }%
_{2}^{\varepsilon }$. It holds that $\Omega =\Omega _{1}^{\varepsilon }\cup
\Gamma _{12}^{\varepsilon }\cup \Omega _{2}^{\varepsilon }$ (disjoint union)
where $\Gamma _{12}^{\varepsilon }=\partial \Omega _{1}^{\varepsilon }\cap
\partial \Omega _{2}^{\varepsilon }$ is the interface of $\Omega
_{1}^{\varepsilon }$ and $\Omega _{2}^{\varepsilon }$. Let $\nu _{j}$ ($%
j=1,2 $) denote the unit outward normal on $\partial \Omega
_{j}^{\varepsilon }$.

As an illustration of the analytic construction made above, see figure 1
below.

Fig.1: Fractured porous medium%

There are several ways to model fractured porous media. The classical and
most studied model named double porosity model was introduced by Barenblatt
et al. \cite{BKZ60}, and has been further developed in \cite{WR63, CS64,
ADH, HP83, Kaz69, Ode65, PS07, PSY09, SV, YPS10, JMS2}. In contrast, very
few works are devoted to multiscale media with the geometry similar to the
one considered in this paper. We may cite \cite{Meir10, Meir12, Panfilov13,
Leme, SSW, SW1}.

(2) \emph{The flow equations}. Motivated by the above-mentioned
phenomenological feature of our medium (the memory appearance), we found
necessary to study in such media, the flow of fluid having memory. That is
why we consider a generalized class of Oldroyd incompressible viscoelastic
fluids including as a special case, the classical Newtonian flow of Stokes'
type. They are modeled by a system of integro-differential equations in
which all the coefficients and memory kernels depend on both fast and slow
space-- and time-- variables. From the physical point of view, it means that
the memory effects arising by meeting an obstacle decay in the surrounding
of the next obstacle. For that simple reason, we can not expect using the
Laplace transform to perform the homogenization process since the memory
kernels depend on the fast time variable. Therefore, to achieve our goal,
instead of using the Laplace transform, we use a direct method involving
some results about the reiterated convergence of sequences defined by
convolution (see Theorem \ref{t3.5} which is new in the context). Let us
emphasize that although nonlocal in time terms can appear in some
homogenization problems (see e.g. \cite{Panfilov13, Yeh}), our approach can
handle more complicated homogenization problems with nonlocal terms in both
time and space variables; see e.g. \cite{M2AS}. As far as we know, this is
the first time that such a problem is considered in the literature.
Therefore, taking into account both the structure of the media and the model
equations, we can see that our main result is new.

To be more precise, we consider a non-stationary flow of an incompressible
viscoelastic non-Newtonian fluid governed by the Stokes system. The
viscoelastic constitutive law associated to the momentum balance, and the
continuity equations of the normal stress and velocity at the interface are
given by (for a.e. $0<t<T$, $T$ being given) 
\begin{equation}
\rho _{1}^{\varepsilon }\frac{\partial \boldsymbol{u}_{\varepsilon }}{%
\partial t}-\Div\sigma _{1}^{\varepsilon }=\rho _{1}^{\varepsilon }f_{1}%
\text{ in }\Omega _{1}^{\varepsilon }  \label{2.1}
\end{equation}%
\begin{equation}
\Div\boldsymbol{u}_{\varepsilon }=0\text{ in }\Omega _{1}^{\varepsilon }
\label{2.2}
\end{equation}%
\begin{equation}
\rho _{2}^{\varepsilon }\frac{\partial \boldsymbol{v}_{\varepsilon }}{%
\partial t}-\Div\sigma _{2}^{\varepsilon }=\rho _{2}^{\varepsilon }f_{2}%
\text{ in }\Omega _{2}^{\varepsilon }  \label{2.3}
\end{equation}%
\begin{equation}
\Div\boldsymbol{v}_{\varepsilon }=0\text{ in }\Omega _{2}^{\varepsilon }
\label{2.4}
\end{equation}%
\begin{equation}
\boldsymbol{u}_{\varepsilon }=\boldsymbol{v}_{\varepsilon }\text{ on }\Gamma
_{12}^{\varepsilon }  \label{2.5}
\end{equation}%
\begin{equation}
\sigma _{1}^{\varepsilon }\cdot \nu _{1}=\sigma _{2}^{\varepsilon }\cdot \nu
_{1}\text{ on }\Gamma _{12}^{\varepsilon }  \label{2.6}
\end{equation}%
\begin{equation}
\boldsymbol{u}_{\varepsilon }=0\text{ on }(\partial \Omega _{1}^{\varepsilon
}\cap \partial \Omega ),\ \boldsymbol{v}_{\varepsilon }=0\text{ on }%
(\partial \Omega _{2}^{\varepsilon }\cap \partial \Omega )  \label{2.7}
\end{equation}%
\begin{equation}
\boldsymbol{u}_{\varepsilon }(x,0)=\boldsymbol{u}^{0}(x)\text{ in }\Omega
_{1}^{\varepsilon }\text{ and }\boldsymbol{v}_{\varepsilon }(x,0)=%
\boldsymbol{v}^{0}(x)\text{ in }\Omega _{2}^{\varepsilon }.  \label{2.8}
\end{equation}%
In the above equations, denoting by $I$ is the identity tensor, 
\begin{equation*}
\sigma _{1}^{\varepsilon }=-p_{\varepsilon }I+A_{0}^{\varepsilon }\nabla 
\boldsymbol{u}_{\varepsilon }+\int_{0}^{t}A_{1}^{\varepsilon }(x,t-\tau
)\nabla \boldsymbol{u}_{\varepsilon }(x,\tau )d\tau
\end{equation*}%
is the stress tensor of the fluid in $\Omega _{1}^{\varepsilon }$ with
density $\rho _{1}^{\varepsilon }$, velocity $\boldsymbol{u}_{\varepsilon }$
and pressure $p_{\varepsilon }$ while 
\begin{equation*}
\sigma _{2}^{\varepsilon }=-q_{\varepsilon }I+B_{0}^{\varepsilon }\nabla 
\boldsymbol{v}_{\varepsilon }+\int_{0}^{t}B_{1}^{\varepsilon }(x,t-\tau
)\nabla \boldsymbol{v}_{\varepsilon }(x,\tau )d\tau
\end{equation*}%
is the stress tensor of the fluid in $\Omega _{2}^{\varepsilon }$ with
density $\rho _{2}^{\varepsilon }$, velocity $\boldsymbol{v}_{\varepsilon }$%
and pressure $q_{\varepsilon }$; $\rho _{1}^{\varepsilon }f_{1}$ and $\rho
_{2}^{\varepsilon }f_{2}$ are the external body forces per volume. The
functions $\rho _{j}^{\varepsilon }$ ($j=1,2$), $A_{i}^{\varepsilon }$ and $%
B_{i}^{\varepsilon }$ ($i=0,1$) defined by $\rho _{j}^{\varepsilon }(x)=\rho
_{j}(x,\frac{x}{\varepsilon })$, $A_{i}^{\varepsilon }(x,t)=A_{i}(x,t,\frac{x%
}{\varepsilon },\frac{t}{\varepsilon })$ and $B_{i}^{\varepsilon
}(x,t)=B_{i}(x,t,\frac{x}{\varepsilon },\frac{t}{\varepsilon })$ for $%
(x,t)\in Q=\Omega \times (0,T)$ ($T$ a given positive real number) are
constrained as follows:

\begin{itemize}
\item[(\textbf{A1})] $A_{i},B_{i}\in \mathcal{C}(\overline{Q};L^{\infty }(%
\mathbb{R}_{y}^{N}\times \mathbb{R}_{\tau }^{+})^{N\times N})$ are $N\times N
$ symmetric matrices satisfying the following assumption: 
\begin{equation*}
A_{0}\xi \cdot \xi \geq \alpha \left\vert \xi \right\vert ^{2},\ B_{0}\xi
\cdot \xi \geq \alpha \left\vert \xi \right\vert ^{2}\text{ for all }\xi \in 
\mathbb{R}^{N}\text{ and a.e. in }\overline{Q}\times \mathbb{R}_{y,\tau
}^{N+1}
\end{equation*}%
where $\alpha >0$ is a given constant not depending on $x,t,y,\tau $ and $%
\xi $, $\mathbb{R}_{\zeta }^{m}$ (integer $m\geq 1$) being denoting the
numerical space $\mathbb{R}^{m}$ with variables $\zeta \in \mathbb{R}^{m}$
and $\mathbb{R}_{\tau }^{+}=\mathbb{R}_{\tau }\cap \lbrack 0,\infty )$;

\item[(\textbf{A2})] $\rho _{j}\in \mathcal{C}(\overline{\Omega };L^{\infty
}(\mathbb{R}_{y}^{N}))$ and there exists $\Lambda >0$ such that $\Lambda
^{-1}\leq \rho _{j}\leq \Lambda $ a.e. in $\overline{\Omega }\times \mathbb{R%
}_{y}^{N}$.

\item[(\textbf{A3})] \textbf{Periodicity}. The characteristic functions $%
\chi _{Y_{2}}$ and $\chi _{Z_{2}}$ of the sets $Y_{2}$ and $Z_{2}$ are $%
[0,1)^{N}$-periodic, and the functions $A_{i}(x,t,\cdot ,\cdot )$, $%
B_{i}(x,t,\cdot ,\cdot )$ and $\rho _{j}(x,\cdot )$ are periodic in the
following sense: 
\begin{equation}
A_{i}(x,t,\cdot ,\cdot ),\ B_{i}(x,t,\cdot ,\cdot )\in L_{\text{per}%
}^{\infty }(Y\times \mathcal{T})^{N\times N}\text{ for all }(x,t)\in 
\overline{Q}  \label{2.19}
\end{equation}%
\begin{equation}
\rho _{j}(x,\cdot )\in \mathcal{C}_{\text{per}}(Y)\text{ for all }x\in 
\overline{\Omega }  \label{2.20}
\end{equation}%
where $\mathcal{T}=[0,1)$ and the spaces $L_{\text{per}}^{p}$ ($1\leq p\leq
\infty $) and $\mathcal{C}_{\text{per}}$ are defined below.
\end{itemize}

\begin{remark}
\label{r2.2}\emph{If we denote by }$\chi _{i}^{\varepsilon }$\emph{\ (}$%
i=1,2 $\emph{) the characteristic function of the set }$\Omega
_{i}^{\varepsilon }$\emph{, then it is important to express }$\chi
_{1}^{\varepsilon }$\emph{\ in terms of the characteristic functions of the
sets }$Y_{2}$\emph{\ and }$Z_{2}$\emph{. Denoting by }$\chi
_{c}^{\varepsilon }$\emph{\ and }$\chi _{p}^{\varepsilon }$\emph{\ the
characteristic functions of the cracks and pores spaces in }$\mathbb{R}^{N}$%
\emph{\ respectively, we have }%
\begin{eqnarray*}
\chi _{c}^{\varepsilon }(x) &=&\chi _{G_{2}}\left( \frac{x}{\varepsilon }%
\right) \equiv \chi _{Y_{2}}\left( \frac{x}{\varepsilon }\right) \text{ 
\emph{(obtained by }}Y\text{\emph{-periodicity)}} \\
\chi _{p}^{\varepsilon }(x) &=&\left( 1-\chi _{G_{2}}\left( \frac{x}{%
\varepsilon }\right) \right) \chi _{H_{2}}\left( \frac{x}{\varepsilon ^{2}}%
\right) \\
&\equiv &\left( 1-\chi _{Y_{2}}\left( \frac{x}{\varepsilon }\right) \right)
\chi _{Z_{2}}\left( \frac{x}{\varepsilon ^{2}}\right) \text{\emph{\
(obtained by }}Z\text{\emph{-periodicity),}}
\end{eqnarray*}%
\emph{hence }%
\begin{equation*}
\chi _{2}^{\varepsilon }(x)=\chi _{c}^{\varepsilon }(x)+\chi
_{p}^{\varepsilon }(x)=\chi _{Y_{2}}\left( \frac{x}{\varepsilon }\right)
+\left( 1-\chi _{Y_{2}}\left( \frac{x}{\varepsilon }\right) \right) \chi
_{Z_{2}}\left( \frac{x}{\varepsilon ^{2}}\right)
\end{equation*}%
\emph{and }%
\begin{equation*}
\chi _{1}^{\varepsilon }(x)=1-\chi _{2}^{\varepsilon }(x)\text{\emph{, for }}%
x\in \Omega .\ \ \ \ \ \ \ \ \ \ \ \ \ \ \ \ \ \ \ \ \ \ \ \ \ \ 
\end{equation*}
\end{remark}

Finally we assume that the functions $f_{1},f_{2}\in L^{2}(Q)^{N}$ and $%
\boldsymbol{u}^{0},\boldsymbol{v}^{0}\in L^{2}(\Omega )^{N}$ with $\Div%
\boldsymbol{u}^{0}=\Div\boldsymbol{v}^{0}=0$ in the sense of the
distributions in $\Omega $.

Our main objective in this work is to find the limiting behavior when $%
\varepsilon \rightarrow 0$, of the sequence of solutions to the system (\ref%
{2.1})-(\ref{2.8}). In this respect, we prove the following result.

\begin{itemize}
\item Assuming that (\textbf{A}1)-(\textbf{A}3) hold true, let (for any $%
\varepsilon >0$) let $\boldsymbol{u}_{\varepsilon }$ (resp. $\boldsymbol{v}%
_{\varepsilon }$) be the velocity field of the fluid in $\Omega
_{1}^{\varepsilon }$ (resp. in $\Omega _{2}^{\varepsilon }$). Let $\pi
_{\varepsilon }=\chi _{1}^{\varepsilon }p_{\varepsilon }+\chi
_{2}^{\varepsilon }q_{\varepsilon }$ (where $\chi _{j}^{\varepsilon }$ is
the characteristic function of the set $\Omega _{j}^{\varepsilon }$) be the
global pressure and set 
\begin{equation*}
\mathbf{f}(x,t)=\iint_{Y\times Z}(\chi _{1}(y,z)\rho
_{1}(x,y)f_{1}(x,t)+\chi _{2}(y,z)\rho _{2}(x,y)f_{2}(x,t))dydz,\ \text{a.e. 
}(x,t)\in Q.
\end{equation*}%
There exist $\boldsymbol{u}\in L^{\infty }(0,T;L^{2}(\Omega )^{N})$ -- the
velocity of the fluid in the skeleton, $\boldsymbol{v}\in L^{\infty
}(0,T;L^{2}(\Omega )^{N})$ -- the velocity of the fluid in the pores and
cracks system, and $p\in L^{2}(0,T;L^{2}(\Omega )/\mathbb{R})$ such that, as 
$\varepsilon \rightarrow 0$, $\chi _{1}^{\varepsilon }\boldsymbol{u}%
_{\varepsilon }\rightarrow \boldsymbol{u}$ in $L^{2}(Q)^{N}$-weak, $\chi
_{2}^{\varepsilon }\boldsymbol{v}_{\varepsilon }\rightarrow \boldsymbol{v}$
in $L^{2}(Q)^{N}$-weak and $\pi _{\varepsilon }\rightarrow p$ in $L^{2}(Q)$%
-weak. Moreover $\boldsymbol{u}=(1-m_{c})(1-m_{p})\boldsymbol{u}_{0}$ and $%
\boldsymbol{v}=\boldsymbol{v}_{c}+\boldsymbol{v}_{p}$ where $\boldsymbol{v}%
_{c}=m_{c}\boldsymbol{u}_{0}$ is the velocity of the fluid in the crack
space and $\boldsymbol{v}_{p}=(1-m_{c})m_{p}\boldsymbol{u}_{0}$ is the
velocity of the fluid in the pore space, and $m_{p}$ (resp. $m_{c}$) is the
porosity of the pore (resp. crack) space and $(\boldsymbol{u}_{0},p)$ is the
unique solution to problem 
\begin{equation*}
\left\{ 
\begin{array}{l}
\rho \frac{\partial \boldsymbol{u}_{0}}{\partial t}-\Div\left( \mathcal{A}%
_{0}\nabla \boldsymbol{u}_{0}+\int_{0}^{t}\mathcal{A}_{1}(x,t-\tau )\nabla 
\boldsymbol{u}_{0}(x,\tau )d\tau \right) +\nabla p=\mathbf{f}\text{ in }Q \\ 
\ \ \ \ \ \ \ \ \ \ \ \ \ \ \ \ \ \ \ \ \ \ \ \ \ \ \ \ \ \ \ \ \ \ \ \ \ \
\ \ \ \ \ \Div\boldsymbol{u}_{0}=0\text{ in }Q \\ 
\ \ \ \ \ \ \ \ \ \ \ \ \ \ \ \ \ \ \ \ \ \ \ \ \ \ \ \ \ \ \ \ \ \ \ 
\boldsymbol{u}_{0}=0\text{ on }\partial \Omega \times (0,T) \\ 
\boldsymbol{u}_{0}(x,0)=(1-m_{c})(1-m_{p})\boldsymbol{u}%
^{0}(x)+(m_{c}+m_{p}(1-m_{c}))\boldsymbol{v}^{0}(x),\ x\in \Omega .%
\end{array}%
\right.
\end{equation*}
\end{itemize}

\bigskip The rest of the paper is organized as follows. In Section 2, we
prove an existence result and a compactness result. In Section 3, we give
necessary material about multiscale convergence together with its connection
to convolution. Finally, Section 4 deals with the derivation of the limiting
model in which we prove the main result of the paper.

We end this section with some notations. All functions are assumed real
values and all function spaces are considered over $\mathbb{R}$. Let $%
Y=[0,1)^{N}$ and let $F(\mathbb{R}^{N})$ be a given function space. In the
case when $F$\ is either $L^{p}$\ or $W^{1,p}$\ ($1\leq p\leq \infty $), we
denote by $F_{\text{per}}(Y)$ the space of functions in $F_{\text{loc}}(%
\mathbb{R}^{N})$ that are $Y$-periodic.\textbf{\ }For $F=\mathcal{C}$, we
denote by\textbf{\ }$\mathcal{C}_{\text{per}}(Y)$\ the space of continuous
functions over $\mathbb{R}^{N}$\textbf{\ }which are $Y$-periodic. We denote
by $F_{\#}(Y)$ the subspace of $F_{\text{per}}(Y)$ consisting of functions $%
u $ having mean value zero: $\int_{Y}u(y)dy=0$. To wit, $W_{\#}^{1,p}(Y)$
stands for the space of those functions $u\in W_{\text{loc}}^{1,p}(\mathbb{R}%
^{N})$ which are $Y$-periodic and satisfy $\int_{Y}u(y)dy=0$. As special
case, $\mathcal{C}_{\text{per}}^{\infty }(Y)=\mathcal{C}_{\text{per}}(Y)\cap 
\mathcal{C}^{\infty }(\mathbb{R}^{N})$. Accordingly, we set $Z=[0,1)^{N}$, $%
\mathcal{T}=[0,1)$, and we define the corresponding spaces. Let $A$ be a $%
m\times m$ matrix whose entries are functions of unknowns $w$, and let $%
\mathbf{u}$ be either a $m\times 1$ vector function or a $m\times m$ matrix.
We will denote by $A(w)$ the value of $A$ at $w$, while either $A\mathbf{u}$
or $A[\mathbf{u}]$ will stand for the product of the $m\times m$ matrix $A$
by $\mathbf{u}$. If $\xi =(\xi _{ij})_{1\leq i,j\leq N}$ and $\eta =(\eta
_{ij})_{1\leq i,j\leq N}\in \mathbb{R}^{N^{2}}$, we define the product $\xi
\cdot \eta $ by $\xi \cdot \eta =\sum_{i,j=1}^{N}\xi _{ij}\eta _{ij}$.

\section{Existence result and uniform estimates}

Our first aim is to give an existence result. The main classical spaces
involved in the mathematical study of incompressible fluid flows are spaces
connected to kinetic energy, the boundary conditions and the conservation of
mass. These spaces are here defined as follows:

\begin{equation*}
V_{\varepsilon }=\{(v_{1},v_{2})\in V_{\varepsilon }^{1}\times
V_{\varepsilon }^{2}:\gamma _{1}^{\varepsilon }v_{1}=\gamma
_{2}^{\varepsilon }v_{2}\text{ on }\Gamma _{12}^{\varepsilon }\}
\end{equation*}%
and 
\begin{equation*}
H_{\varepsilon }=\{(v_{1},v_{2})\in L^{2}(\Omega _{1}^{\varepsilon
})^{N}\times L^{2}(\Omega _{2}^{\varepsilon })^{N}:\Div v_{j}=0\text{ in }%
\Omega _{j}^{\varepsilon }\text{ and }v_{j}\cdot \nu _{j}=0\text{ on }%
\partial \Omega _{j}^{\varepsilon }\}
\end{equation*}%
where 
\begin{equation*}
V_{\varepsilon }^{j}=\{v\in H^{1}(\Omega _{j}^{\varepsilon })^{N}:\Div %
v_{j}=0\text{ in }\Omega _{j}^{\varepsilon }\text{ and }\gamma
_{j}^{\varepsilon }v_{j}=0\text{ on }\partial \Omega _{j}^{\varepsilon }\cap
\partial \Omega \}\ (j=1,2),
\end{equation*}%
$\gamma _{j}^{\varepsilon }$ being denoting the zero order trace on the
boundary $\partial \Omega _{j}^{\varepsilon }$ of $\Omega _{j}^{\varepsilon
} $. The space $H_{\varepsilon }$ is a Hilbert space with Hilbertian norm 
\begin{equation*}
\left\Vert (v_{1},v_{2})\right\Vert _{H_{\varepsilon }}=\left(
\sum_{j=1}^{2}\left\Vert v_{j}\right\Vert _{L^{2}(\Omega _{j}^{\varepsilon
})^{N}}^{2}\right) ^{\frac{1}{2}}.
\end{equation*}%
For $(v_{1},v_{2})\in V_{\varepsilon }$ we set 
\begin{equation*}
\left\Vert (v_{1},v_{2})\right\Vert _{V_{\varepsilon }}=\left(
\sum_{j=1}^{2}\left\Vert \nabla v_{j}\right\Vert _{L^{2}(\Omega
_{j}^{\varepsilon })^{N^{2}}}^{2}\right) ^{\frac{1}{2}}.
\end{equation*}%
The following holds true.

\begin{lemma}
\label{l0}Equipped with $\left\Vert \cdot \right\Vert _{V_{\varepsilon }}$, $%
V_{\varepsilon }$ is a Hilbert space.
\end{lemma}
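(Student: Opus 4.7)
The plan is to verify three things in sequence: that the expression $\|\cdot\|_{V_\varepsilon}$ is induced by an inner product, that it is actually a norm (positive-definiteness being the delicate point), and that the resulting inner product space is complete. The natural candidate for the inner product is
\begin{equation*}
\langle (v_1,v_2),(w_1,w_2)\rangle_{V_\varepsilon}=\sum_{j=1}^{2}\int_{\Omega_j^\varepsilon}\nabla v_j\cdot\nabla w_j\,dx,
\end{equation*}
which is manifestly bilinear, symmetric, and induces $\|\cdot\|_{V_\varepsilon}$; so the only nontrivial axiom is positive-definiteness.

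For positive-definiteness I would exploit the interface-matching condition via a gluing argument. Given $(v_1,v_2)\in V_\varepsilon$, define $v:\Omega\to\mathbb{R}^N$ by $v=v_j$ on $\Omega_j^\varepsilon$. Because the traces $\gamma_j^\varepsilon v_j$ coincide on $\Gamma_{12}^\varepsilon$ and vanish on the parts of $\partial\Omega_j^\varepsilon$ that meet $\partial\Omega$, and since $\partial Y_1$ (hence $\Gamma_{12}^\varepsilon$) is Lipschitz, a standard extension-by-gluing result gives $v\in H^1_0(\Omega)^N$. The Poincar\'e inequality on $\Omega$ then yields
\begin{equation*}
\sum_{j=1}^{2}\|v_j\|_{L^2(\Omega_j^\varepsilon)^N}^2 \;=\; \|v\|_{L^2(\Omega)^N}^2 \;\leq\; C_\Omega^2\,\|\nabla v\|_{L^2(\Omega)^{N^2}}^2 \;=\; C_\Omega^2\,\|(v_1,v_2)\|_{V_\varepsilon}^2,
\end{equation*}
so that $\|\cdot\|_{V_\varepsilon}=0$ forces $(v_1,v_2)=(0,0)$. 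As a byproduct, the above estimate shows that $\|\cdot\|_{V_\varepsilon}$ is equivalent on $V_\varepsilon$ to the restriction of the full $H^1$-product norm.

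For completeness, I would take a Cauchy sequence $\{(v_1^n,v_2^n)\}$ in $(V_\varepsilon,\|\cdot\|_{V_\varepsilon})$. The equivalence of norms just established makes it Cauchy in the product Hilbert space $H^1(\Omega_1^\varepsilon)^N\times H^1(\Omega_2^\varepsilon)^N$, so it converges to some $(v_1,v_2)$ there. It then remains to check that the defining constraints pass to the limit: the condition $\Div v_j^n=0$ survives because $\Div$ is continuous from $H^1$ into $L^2$; the boundary condition $\gamma_j^\varepsilon v_j^n=0$ on $\partial\Omega_j^\varepsilon\cap\partial\Omega$ and the matching condition $\gamma_1^\varepsilon v_1^n=\gamma_2^\varepsilon v_2^n$ on $\Gamma_{12}^\varepsilon$ both survive by continuity of the trace operator from $H^1$ into $H^{1/2}$. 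Hence $(v_1,v_2)\in V_\varepsilon$ and the limit takes place in $V_\varepsilon$.

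I expect the gluing step --- confirming that the matched pair really defines an $H^1_0(\Omega)$ field --- to be the main (though mild) obstacle; it is exactly where the Lipschitz regularity of $\partial Y_1$ is used. Everything else is bookkeeping, and the proof could reasonably be presented in a few lines once that fact is invoked.
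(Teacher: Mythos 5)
Your argument is correct and rests on the same core idea as the paper: glue $(v_1,v_2)$ into a single field $u=\chi_1^\varepsilon v_1+\chi_2^\varepsilon v_2$, observe that the matching traces on $\Gamma_{12}^\varepsilon$ and the vanishing traces on $\partial\Omega_j^\varepsilon\cap\partial\Omega$ force $u\in H^1_0(\Omega)^N$ with $\nabla u=\chi_1^\varepsilon\nabla v_1+\chi_2^\varepsilon\nabla v_2$, and then deduce positive-definiteness of $\|\cdot\|_{V_\varepsilon}$ from a Poincar\'e inequality. The one substantive difference is the Poincar\'e inequality invoked: you apply the ordinary $H^1_0(\Omega)$-Poincar\'e inequality directly to $u$, whereas the paper cites the more elaborate Poincar\'e inequality for punctured domains of Lieb, Seiringer and Yngvason to control $u-\frac{1}{|\Omega|}\int_\Omega u\,dx$ by $\|\nabla u\|_{L^2(\Omega_1^\varepsilon)}+\|\nabla u\|_{L^2(\Omega_2^\varepsilon)}$, and then uses $u\in H^1_0(\Omega)^N$ to kill the mean. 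Once the gluing has placed $u$ in $H^1_0(\Omega)^N$ the two are interchangeable; your version is the more economical one, and it yields directly the same $\varepsilon$-independent equivalence of $\|\cdot\|_{V_\varepsilon}$ with the full $H^1$-product norm. You also spell out the completeness step (closedness of the divergence-free, boundary, and interface-matching constraints under $H^1$-limits via continuity of $\Div$ and of the trace operator), which the paper dismisses as one of the ``other properties [that] are easily verified'' --- a welcome bit of explicitness.
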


\begin{proof}
It is sufficient to verify that $\left\Vert \cdot \right\Vert
_{V_{\varepsilon }}$ is a norm on $V_{\varepsilon }$. To that end, let $%
(v_{1},v_{2})\in V_{\varepsilon }$, and set $u=\chi _{1}^{\varepsilon
}v_{1}+\chi _{2}^{\varepsilon }v_{2}$. Since $\gamma _{1}^{\varepsilon
}v_{1}=\gamma _{2}^{\varepsilon }v_{2}$ on $\Gamma _{12}^{\varepsilon }$ and 
$\gamma _{j}^{\varepsilon }v_{j}=0$ on $\partial \Omega _{j}^{\varepsilon
}\cap \partial \Omega $ ($j=1,2$), it holds that $\nabla u=\chi
_{1}^{\varepsilon }\nabla v_{1}+\chi _{2}^{\varepsilon }\nabla v_{2}$ and $%
u=0$ on $\partial \Omega $. Thus $u\in H_{0}^{1}(\Omega )^{N}$. Using the
general Poincar\'{e} inequality for punctured domains \cite[Theorem 4]{Lieb}%
, there exists a positive constant $C$ depending only on $\Omega $ such that 
\begin{eqnarray*}
\left\Vert u-\frac{1}{\left\vert \Omega \right\vert }\int_{\Omega
}udx\right\Vert _{L^{2}(\Omega )} &\leq &C\left[ \left\Vert \nabla
u\right\Vert _{L^{2}(\Omega _{1}^{\varepsilon })}+\left\Vert \nabla
u\right\Vert _{L^{2}(\Omega _{2}^{\varepsilon })}\right] \\
&=&C\left[ \left\Vert \nabla v_{1}\right\Vert _{L^{2}(\Omega
_{1}^{\varepsilon })}+\left\Vert \nabla v_{2}\right\Vert _{L^{2}(\Omega
_{2}^{\varepsilon })}\right] .
\end{eqnarray*}%
Now assume $\left\Vert (v_{1},v_{2})\right\Vert _{V_{\varepsilon }}=0$; then 
$\left\Vert \nabla v_{j}\right\Vert _{L^{2}(\Omega _{j}^{\varepsilon })}=0$ (%
$j=1,2$), hence 
\begin{equation*}
u-\frac{1}{\left\vert \Omega \right\vert }\int_{\Omega }udx=0\text{ a.e. in }%
\Omega .
\end{equation*}%
We infer that $u=0$ a.e. in $\Omega $ since $u\in H_{0}^{1}(\Omega )^{N}$.
This entails $\chi _{j}^{\varepsilon }u=0$, i.e. $v_{j}=0$ a.e. in $\Omega
_{j}^{\varepsilon }$. This is sufficient to conclude that $\left\Vert \cdot
\right\Vert _{V_{\varepsilon }}$ is a norm on $V_{\varepsilon }$ since the
other properties are easily verified.
\end{proof}

Now, we assume in the sequel that $H_{\varepsilon }$ is rather equipped with
the inner product 
\begin{equation*}
\left( \left( \boldsymbol{u},\boldsymbol{v}\right) \right) =\int_{\Omega
}(\chi _{1}^{\varepsilon }\rho _{1}^{\varepsilon }u_{1}\cdot v_{1}+\chi
_{2}^{\varepsilon }\rho _{2}^{\varepsilon }u_{2}\cdot v_{2})dx\text{ for }%
\boldsymbol{u}=(u_{1},u_{2}),\boldsymbol{v}=(v_{1},v_{2})\in H_{\varepsilon
},
\end{equation*}%
which makes it a Hilbert space. This stems from the inequality $\Lambda
^{-1}\leq \rho _{j}^{\varepsilon }\leq \Lambda $ a.e. in $\Omega $. Keeping
this in mind, the following continuous embeddings $V_{\varepsilon
}\hookrightarrow H_{\varepsilon }\hookrightarrow V_{\varepsilon }^{\prime }$
hold true.

Now, set $U_{\varepsilon }=(\boldsymbol{u}_{\varepsilon },\boldsymbol{v}%
_{\varepsilon })$, $\mathcal{F}=(f_{1},f_{2})$ and $U^{0}=(\boldsymbol{u}%
^{0},\boldsymbol{v}^{0})$. If we choose $\boldsymbol{v}=(v_{1},v_{2})\in
V_{\varepsilon }$ and multiply Eqns (\ref{2.1}) and (\ref{2.3}) by $v_{1}$
and $v_{2}$ respectively, and next sum up the resulting equations, we get 
\begin{eqnarray}
&&\frac{d}{dt}\left( \left( U_{\varepsilon }(t),\boldsymbol{v}\right)
\right) +\int_{\Omega }\left( \chi _{1}^{\varepsilon }A_{0}^{\varepsilon
}(t)\nabla \boldsymbol{u}_{\varepsilon }(t)\cdot \nabla v_{1}+\chi
_{2}^{\varepsilon }B_{0}^{\varepsilon }(t)\nabla \boldsymbol{v}_{\varepsilon
}(t)\cdot \nabla v_{2}\right) dx  \label{2.9} \\
&&+\int_{0}^{t}\left( \int_{\Omega }\left( \chi _{1}^{\varepsilon
}A_{1}^{\varepsilon }(t-\tau )\nabla \boldsymbol{u}_{\varepsilon }(\tau
)\cdot \nabla v_{1}+\chi _{2}^{\varepsilon }B_{1}^{\varepsilon }(t-\tau
)\nabla \boldsymbol{v}_{\varepsilon }(\tau )\cdot \nabla v_{2}\right)
dx\right) d\tau  \notag \\
&=&\left( \left( \mathcal{F},\boldsymbol{v}\right) \right) .  \notag
\end{eqnarray}%
The linear mappings 
\begin{equation*}
V_{\varepsilon }\ni \boldsymbol{v}=(v_{1},v_{2})\rightarrow \int_{\Omega
}\left( \chi _{1}^{\varepsilon }A_{0}^{\varepsilon }(t)\nabla \boldsymbol{u}%
_{\varepsilon }(t)\cdot \nabla v_{1}+\chi _{2}^{\varepsilon
}B_{0}^{\varepsilon }(t)\nabla \boldsymbol{v}_{\varepsilon }(t)\cdot \nabla
v_{2}\right) dx
\end{equation*}%
and 
\begin{equation*}
V_{\varepsilon }\ni \boldsymbol{v}\rightarrow -\int_{\Omega }\left( \chi
_{1}^{\varepsilon }A_{1}^{\varepsilon }(t-\tau )\nabla \boldsymbol{u}%
_{\varepsilon }(\tau )\cdot \nabla v_{1}+\chi _{2}^{\varepsilon
}B_{1}^{\varepsilon }(t-\tau )\nabla \boldsymbol{v}_{\varepsilon }(\tau
)\cdot \nabla v_{2}\right) dx
\end{equation*}%
belong to $V_{\varepsilon }^{\prime }$ and hence define two bounded linear
operators $\mathcal{A}_{\varepsilon }(t)$ and $\mathcal{B}_{\varepsilon
}(t,\tau )$ (for a.e. $0\leq t\leq T$ and $0\leq \tau \leq t\leq T$) from $%
V_{\varepsilon }$ into $V_{\varepsilon }^{\prime }$ as follows: 
\begin{eqnarray*}
\left\langle \mathcal{A}_{\varepsilon }(t)U_{\varepsilon }(t),\boldsymbol{v}%
\right\rangle &=&\int_{\Omega }\left( \chi _{1}^{\varepsilon
}A_{0}^{\varepsilon }(t)\nabla \boldsymbol{u}_{\varepsilon }(t)\cdot \nabla
v_{1}+\chi _{2}^{\varepsilon }B_{0}^{\varepsilon }(t)\nabla \boldsymbol{v}%
_{\varepsilon }(t)\cdot \nabla v_{2}\right) dx, \\
\left\langle \mathcal{B}_{\varepsilon }(t,\tau )U_{\varepsilon }(\tau ),%
\boldsymbol{v}\right\rangle &=&-\int_{\Omega }\left( \chi _{1}^{\varepsilon
}A_{1}^{\varepsilon }(t-\tau )\nabla \boldsymbol{u}_{\varepsilon }(\tau
)\cdot \nabla v_{1}+\chi _{2}^{\varepsilon }B_{1}^{\varepsilon }(t-\tau
)\nabla \boldsymbol{v}_{\varepsilon }(\tau )\cdot \nabla v_{2}\right) dx \\
\text{for }v &\in &V_{\varepsilon }.
\end{eqnarray*}%
Therefore (\ref{2.9}) tantamount to 
\begin{equation}
\frac{dU_{\varepsilon }}{dt}(t)+\mathcal{A}_{\varepsilon }(t)U_{\varepsilon
}(t)=\int_{0}^{t}\mathcal{B}_{\varepsilon }(t,\tau )U_{\varepsilon }(\tau
)d\tau +\mathcal{F}\text{ in }V_{\varepsilon }^{\prime }  \label{2.10}
\end{equation}%
\begin{equation}
U_{\varepsilon }(0)=U^{0}\text{ in }H_{\varepsilon }.\ \ \ \ \ \ \ \ \ \ \ \
\ \ \ \ \ \ \ \ \ \ \ \ \ \ \ \ \ \ \ \ \ \   \label{2.11}
\end{equation}%
The following result holds.

\begin{proposition}
\label{p2.1}It holds that 
\begin{equation}
\left\langle \mathcal{A}_{\varepsilon }(t)\boldsymbol{v},\boldsymbol{v}%
\right\rangle \geq \alpha \left\Vert \boldsymbol{v}\right\Vert
_{V_{\varepsilon }}^{2}\text{ for all }\boldsymbol{v}\in V_{\varepsilon }.
\label{2.12}
\end{equation}
\end{proposition}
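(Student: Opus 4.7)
The plan is to verify coercivity by a direct computation from the definition of $\mathcal{A}_\varepsilon(t)$ combined with the pointwise ellipticity contained in assumption \textbf{(A1)}. First, I would substitute the pair $\boldsymbol{v}=(v_1,v_2)$ for both arguments in the bilinear form defining $\mathcal{A}_\varepsilon(t)$, obtaining
\begin{equation*}
\langle \mathcal{A}_\varepsilon(t)\boldsymbol{v},\boldsymbol{v}\rangle = \int_\Omega \chi_1^\varepsilon A_0^\varepsilon(t)\nabla v_1\cdot\nabla v_1\,dx + \int_\Omega \chi_2^\varepsilon B_0^\varepsilon(t)\nabla v_2\cdot\nabla v_2\,dx.
\end{equation*}
The two characteristic functions reduce the integrals to $\Omega_1^\varepsilon$ and $\Omega_2^\varepsilon$ respectively, so the point is to bound each integrand pointwise from below by $\alpha|\nabla v_j|^2$.

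Since $v_j$ is $\mathbb{R}^N$-valued, $\nabla v_j$ is the $N\times N$ Jacobian whose $i$-th row is $\nabla (v_j)_i$, and the matrices $A_0^\varepsilon, B_0^\varepsilon$ act on $\nabla v_j$ row by row (consistent with the $\xi\cdot\eta$ convention declared at the end of the introduction). Accordingly I would decompose
\begin{equation*}
A_0^\varepsilon(t)\nabla v_1\cdot\nabla v_1 = \sum_{i=1}^N A_0^\varepsilon(t)\nabla (v_1)_i\cdot\nabla (v_1)_i \geq \alpha \sum_{i=1}^N |\nabla (v_1)_i|^2 = \alpha |\nabla v_1|^2,
\end{equation*}
where the inequality uses the pointwise ellipticity of $A_0$ on vectors $\xi\in\mathbb{R}^N$ provided by \textbf{(A1)} (applied at $(x,t,x/\varepsilon,t/\varepsilon)$ with $\xi=\nabla (v_1)_i$). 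An identical argument applies to $B_0^\varepsilon$ and $\nabla v_2$.

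Integrating these pointwise lower bounds over $\Omega_1^\varepsilon$ and $\Omega_2^\varepsilon$ and recalling the definition of $\|\cdot\|_{V_\varepsilon}$ gives
\begin{equation*}
\langle \mathcal{A}_\varepsilon(t)\boldsymbol{v},\boldsymbol{v}\rangle \geq \alpha\left(\|\nabla v_1\|_{L^2(\Omega_1^\varepsilon)^{N^2}}^2+\|\nabla v_2\|_{L^2(\Omega_2^\varepsilon)^{N^2}}^2\right) = \alpha\|\boldsymbol{v}\|_{V_\varepsilon}^2,
\end{equation*}
which is the desired inequality. The proof involves no genuine obstacle; the only point that deserves care is the componentwise interpretation of $A_0^\varepsilon\nabla v_j\cdot\nabla v_j$ and the verification that the scalar ellipticity postulated in \textbf{(A1)} transfers to the Jacobian matrix — both follow directly from the conventions fixed in the introduction.
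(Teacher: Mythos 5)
Your proof is correct and follows essentially the same route as the paper: substitute $\boldsymbol{v}$ into the bilinear form defining $\mathcal{A}_\varepsilon(t)$, apply the pointwise ellipticity from (\textbf{A1}), and integrate. You merely spell out the row-by-row interpretation of $A_0^\varepsilon\nabla v_j\cdot\nabla v_j$, which the paper leaves implicit.
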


\begin{proof}
For $\boldsymbol{v}=(v_{1},v_{2})\in V_{\varepsilon }$ we have 
\begin{eqnarray*}
\left\langle \mathcal{A}_{\varepsilon }(t)\boldsymbol{v},\boldsymbol{v}%
\right\rangle &=&\int_{\Omega }\left( \chi _{1}^{\varepsilon
}A_{0}^{\varepsilon }(t)\nabla v_{1}\cdot \nabla v_{1}+\chi
_{2}^{\varepsilon }B_{0}^{\varepsilon }(t)\nabla v_{2}\cdot \nabla
v_{2}\right) dx \\
&\geq &\alpha \int_{\Omega }\left( \chi _{1}^{\varepsilon }\left\vert \nabla
v_{1}\right\vert ^{2}+\chi _{2}^{\varepsilon }\left\vert \nabla
v_{2}\right\vert ^{2}\right) dx=\alpha \left\Vert \boldsymbol{v}\right\Vert
_{V_{\varepsilon }}^{2}.
\end{eqnarray*}
\end{proof}

We can now state and prove the existence result.

\begin{theorem}
\label{t2.1}For any $\varepsilon >0$, there exist two pairs $(\boldsymbol{u}%
_{\varepsilon },\boldsymbol{v}_{\varepsilon })\in L^{2}(0,T;V_{\varepsilon
})\cap L^{\infty }(0,T;H_{\varepsilon })$ and $(p_{\varepsilon
},q_{\varepsilon })\in L^{2}(0,T;L^{2}(\Omega _{1}^{\varepsilon })\times
L^{2}(\Omega _{2}^{\varepsilon }))$ that solve \emph{(\ref{2.1})-(\ref{2.8})}%
. Moreover the vector-function $(\boldsymbol{u}_{\varepsilon },\boldsymbol{v}%
_{\varepsilon })$ is unique and belongs to $\mathcal{C}([0,T];H_{\varepsilon
})$, and $(p_{\varepsilon },q_{\varepsilon })$ is unique up to a constant in
the following sense: 
\begin{equation}
\int_{\Omega _{1}^{\varepsilon }}p_{\varepsilon }dx=0\text{ and }%
\int_{\Omega _{2}^{\varepsilon }}q_{\varepsilon }dx=0.  \label{2.13}
\end{equation}
\end{theorem}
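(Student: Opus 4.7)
The plan is to combine a Faedo--Galerkin scheme on the abstract variational problem (2.10)--(2.11) with a de Rham-type reconstruction of the pressures in each subdomain. Since $V_\varepsilon$ is a separable Hilbert space by Lemma 1.1, I select a Hilbert basis $(\boldsymbol{w}_k)_{k\geq 1}$ of $V_\varepsilon$ and seek approximations $U_\varepsilon^n(t)=\sum_{k=1}^n g_k^n(t)\boldsymbol{w}_k$ solving the projection of (2.10) onto $V_\varepsilon^n=\mathrm{span}\{\boldsymbol{w}_1,\ldots,\boldsymbol{w}_n\}$ with initial datum $U_\varepsilon^n(0)$ the orthogonal projection of $U^0$ onto $V_\varepsilon^n$. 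This is a linear Volterra integro-ODE system with continuous coefficients, so it has a unique global solution. Testing against $U_\varepsilon^n$ itself and using the coercivity (2.12), the $L^\infty$ bound on $A_1,B_1$, Cauchy--Schwarz and Young's inequality yields an estimate of the form
\[
\tfrac{d}{dt}\|U_\varepsilon^n(t)\|_{H_\varepsilon}^2 + \alpha\|U_\varepsilon^n(t)\|_{V_\varepsilon}^2 \leq C\!\left(\|\mathcal{F}(t)\|_{V_\varepsilon'}^2 + \|U_\varepsilon^n(t)\|_{H_\varepsilon}^2 + \int_0^t\|U_\varepsilon^n(\tau)\|_{V_\varepsilon}^2\,d\tau\right),
\]
and a Gronwall--Bellman--Volterra argument gives a uniform bound in $L^\infty(0,T;H_\varepsilon)\cap L^2(0,T;V_\varepsilon)$.

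Extracting weakly-$\ast$ and weakly convergent subsequences, the linearity of (2.10) lets me pass to the limit to obtain $U_\varepsilon=(\boldsymbol{u}_\varepsilon,\boldsymbol{v}_\varepsilon)$ solving (2.10)--(2.11); the bound on $dU_\varepsilon/dt$ in $L^2(0,T;V_\varepsilon')$ extracted from the equation then yields $U_\varepsilon\in\mathcal{C}([0,T];H_\varepsilon)$, matching the initial datum in $H_\varepsilon$. To recover the pressures, I note that (2.10) only encodes the projection on divergence-free test functions. For fixed $t$, restricting test pairs to $(v_1,0)$ with $v_1\in\mathcal{D}(\Omega_1^\varepsilon)^N$, $\Div v_1=0$, de Rham's theorem delivers a $p_\varepsilon(\cdot,t)\in L^2(\Omega_1^\varepsilon)$, unique up to an additive constant, such that (2.1) holds distributionally on $\Omega_1^\varepsilon$; similarly in $\Omega_2^\varepsilon$ for $q_\varepsilon$ and (2.3). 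The normalization (2.13) pins down the constants, and the $L^2(0,T;L^2)$ regularity in time follows from the analogous regularity of the other terms in (2.1) and (2.3). Testing (2.10) now against an arbitrary $\boldsymbol{v}\in V_\varepsilon$ and integrating by parts in each subdomain using (2.1) and (2.3) already established, the bulk terms cancel and the surface contribution yields the transmission condition (2.6) in the distributional sense on $\Gamma_{12}^\varepsilon$; conditions (2.5) and (2.7) are encoded in membership in $V_\varepsilon$.

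Uniqueness of $(\boldsymbol{u}_\varepsilon,\boldsymbol{v}_\varepsilon)$ follows by linearity: testing the homogeneous problem for the difference against itself and applying coercivity together with the Volterra--Gronwall inequality forces the difference to vanish. The pressures are then unique modulo additive constants, which are fixed by (2.13).

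The main obstacle I expect is step (ii), namely the reconstruction of pressures consistent with the interface condition. A naive application of de Rham in the union of the subdomains is unavailable because $\nabla$ acts across $\Gamma_{12}^\varepsilon$, whereas test functions in $V_\varepsilon$ only satisfy a matching trace condition (not a matching normal derivative). The correct order, as above, is to first extract interior pressures on each side using compactly supported divergence-free test functions, and only afterwards to extract (2.6) by testing against the full space $V_\varepsilon$; one must verify that no singular interfacial distribution is left over, which requires care but ultimately rests on the symmetry of $A_0,B_0,A_1,B_1$ and the integration by parts performed piece by piece in $\Omega_1^\varepsilon$ and $\Omega_2^\varepsilon$.
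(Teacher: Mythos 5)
Your proposal is correct and follows essentially the same route as the paper: the paper delegates the existence and uniqueness of $(\boldsymbol{u}_{\varepsilon},\boldsymbol{v}_{\varepsilon})$ to Theorem 3.2 of Orlik (an abstract Galerkin/Gronwall result for hereditary Volterra evolution equations whose hypotheses are secured by the coercivity of Proposition \ref{p2.1}) and the pressure reconstruction to Propositions 1.1 and 1.2 of Temam (the de Rham argument), whereas you unpack both references into a self-contained Faedo--Galerkin plus de Rham argument. Your closing observation about the correct order of operations -- first extract interior pressures in each $\Omega_j^{\varepsilon}$ with compactly supported solenoidal test functions, then recover the transmission condition (\ref{2.6}) by testing against all of $V_{\varepsilon}$ -- is a point the paper leaves implicit in the citation of Temam but is exactly what makes the pressure pair well-defined on the fissured geometry.
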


\begin{proof}
We infer from Proposition \ref{p2.1} that the hypotheses of Theorem 3.2 in 
\cite{Orlik} are fulfilled. Therefore, appealing to the above cited result
we obtain the existence and uniqueness of $(\boldsymbol{u}_{\varepsilon },%
\boldsymbol{v}_{\varepsilon })\in L^{2}(0,T;V_{\varepsilon })\cap \mathcal{C}%
([0,T];H_{\varepsilon })$ satisfying (\ref{2.10})-(\ref{2.11}). The
existence of $p_{\varepsilon }$ and $q_{\varepsilon }$ satisfying (\ref{2.13}%
) follows by the use of Propositions 1.1 and 1.2 of \cite{Temam}.
\end{proof}

The next result provides us with uniform estimates.

\begin{lemma}
\label{l2.1}Under assumptions \emph{(\textbf{A1})-(\textbf{A2})} it holds
that 
\begin{equation}
\sup_{0\leq t\leq T}\left( \left\Vert \boldsymbol{u}_{\varepsilon
}(t)\right\Vert _{L^{2}(\Omega _{1}^{\varepsilon })}^{2}+\left\Vert 
\boldsymbol{v}_{\varepsilon }(t)\right\Vert _{L^{2}(\Omega _{2}^{\varepsilon
})}^{2}\right) \leq C,  \label{2.14}
\end{equation}%
\begin{equation}
\int_{0}^{T}\left( \left\Vert \nabla \boldsymbol{u}_{\varepsilon
}(t)\right\Vert _{L^{2}(\Omega _{1}^{\varepsilon })}^{2}+\left\Vert \nabla 
\boldsymbol{v}_{\varepsilon }(t)\right\Vert _{L^{2}(\Omega _{2}^{\varepsilon
})}^{2}\right) dt\leq C  \label{2.15}
\end{equation}%
\begin{equation}
\left\Vert p_{\varepsilon }\right\Vert _{L^{2}(\Omega _{1}^{\varepsilon
}\times (0,T))}\leq C\text{ and }\left\Vert q_{\varepsilon }\right\Vert
_{L^{2}(\Omega _{2}^{\varepsilon }\times (0,T))}\leq C  \label{2.16}
\end{equation}%
\begin{equation}
\left\Vert \rho _{1}^{\varepsilon }\frac{\partial \boldsymbol{u}%
_{\varepsilon }}{\partial t}\right\Vert _{L^{2}(0,T;(V_{\varepsilon
}^{1})^{\prime })}\leq C\text{ and }\left\Vert \rho _{2}^{\varepsilon }\frac{%
\partial \boldsymbol{v}_{\varepsilon }}{\partial t}\right\Vert
_{L^{2}(0,T;(V_{\varepsilon }^{2})^{\prime })}\leq C  \label{2.17}
\end{equation}%
where $C$ is a positive constant not depending on $\varepsilon $.
\end{lemma}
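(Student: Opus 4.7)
The plan is to derive all four estimates from the weak formulation \eqref{2.10}--\eqref{2.11} by a standard energy argument, treating the Volterra convolution as a subordinate perturbation absorbed via Gronwall's lemma. First I would test \eqref{2.10} with $U_{\varepsilon}(t)$ itself; since $U_{\varepsilon}\in L^{2}(0,T;V_{\varepsilon})\cap \mathcal{C}([0,T];H_{\varepsilon})$, the usual chain rule in time produces the identity
\[
\tfrac{1}{2}\tfrac{d}{dt}\|U_{\varepsilon}(t)\|_{H_{\varepsilon}}^{2}+\langle \mathcal{A}_{\varepsilon}(t)U_{\varepsilon}(t),U_{\varepsilon}(t)\rangle =\int_{0}^{t}\langle \mathcal{B}_{\varepsilon}(t,\tau)U_{\varepsilon}(\tau),U_{\varepsilon}(t)\rangle d\tau+((\mathcal{F}(t),U_{\varepsilon}(t))).
\]
Proposition \ref{p2.1} bounds the $\mathcal{A}$-term below by $\alpha\|U_{\varepsilon}(t)\|_{V_{\varepsilon}}^{2}$. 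Assumption (\textbf{A1}) gives $\|\mathcal{B}_{\varepsilon}(t,\tau)\|_{V_{\varepsilon}\to V_{\varepsilon}'}\leq M$ uniformly in $\varepsilon,t,\tau$, so by Cauchy--Schwarz and Young's inequality the convolution term is dominated by $\tfrac{\alpha}{2}\|U_{\varepsilon}(t)\|_{V_{\varepsilon}}^{2}+C\int_{0}^{t}\|U_{\varepsilon}(\tau)\|_{V_{\varepsilon}}^{2}d\tau$. The forcing term is controlled by $C\|\mathcal{F}(t)\|\,\|U_{\varepsilon}(t)\|_{V_{\varepsilon}}$, the required $\varepsilon$-uniform Poincar\'{e} inequality on $V_{\varepsilon}$ being precisely the one obtained in the proof of Lemma \ref{l0} by gluing $(v_{1},v_{2})$ into a single $H_{0}^{1}(\Omega)^{N}$-function.

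Integrating in time from $0$ to an arbitrary $s\in(0,T]$, absorbing the half-power of the $V_{\varepsilon}$-norm into the coercive term, and applying Gronwall's lemma to the resulting integral inequality yields \eqref{2.14} and \eqref{2.15} simultaneously, with constants depending only on $\alpha,T,\Lambda,\|f_{j}\|_{L^{2}(Q)}$ and $\|U^{0}\|_{H_{\varepsilon}}$. The time-derivative estimate \eqref{2.17} is then immediate: testing \eqref{2.1} (resp.\ \eqref{2.3}) against an arbitrary $v_{1}\in V_{\varepsilon}^{1}$ (resp.\ $v_{2}\in V_{\varepsilon}^{2}$), the divergence-free condition annihilates the pressure, and the remaining divergence-form and source contributions are controlled in $(V_{\varepsilon}^{j})'$ by \eqref{2.15} together with (\textbf{A1})--(\textbf{A2}).

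The pressure bounds \eqref{2.16} are the most delicate step. I would read off from \eqref{2.1}--\eqref{2.3} that $\nabla p_{\varepsilon}$ (resp.\ $\nabla q_{\varepsilon}$) defines an element of $L^{2}(0,T;H^{-1}(\Omega_{1}^{\varepsilon})^{N})$ (resp.\ $L^{2}(0,T;H^{-1}(\Omega_{2}^{\varepsilon})^{N})$) whose norm is already controlled by the quantities bounded in \eqref{2.14}--\eqref{2.15} and $\|f_{j}\|_{L^{2}(Q)}$, and then invoke a Ne\v{c}as-type inequality on $\Omega_{j}^{\varepsilon}$, combined with the zero-mean normalization \eqref{2.13}, to reconstruct the pressure from its gradient. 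The principal obstacle is to ensure that the constant in this Ne\v{c}as inequality---equivalently, the norm of a right-inverse to $\mathrm{div}$---is uniform in $\varepsilon$, a non-trivial matter because of the two-scale geometry, with cracks at scale $\varepsilon$ and pores at scale $\varepsilon^{2}$. I would address this by constructing a Bogovskii-type operator on each reference cell (using that $Y_{2}$ and $Z_{2}$ are open, connected, Lipschitz, and of positive measure) and assembling via a periodic partition of unity to produce a uniformly bounded right-inverse of the divergence on each $\Omega_{j}^{\varepsilon}$.
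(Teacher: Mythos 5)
Your energy argument for \eqref{2.14}, \eqref{2.15} and \eqref{2.17} is essentially the paper's: test with the solution itself, use the coercivity of $\mathcal{A}_{\varepsilon}$ from Proposition \ref{p2.1}, absorb the convolution term via Young into the coercive term plus a Gronwall tail, and integrate. The only cosmetic differences are that you phrase it in the abstract operator language of \eqref{2.10} rather than the concrete integrals, and you bound the forcing term via the $\varepsilon$-uniform Poincar\'e inequality from Lemma \ref{l0}, whereas the paper simply controls $((\mathcal{F},U_{\varepsilon}))$ by the $H_{\varepsilon}$-norm of $U_{\varepsilon}$ and lets Gronwall absorb it (which avoids invoking Poincar\'e altogether). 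For \eqref{2.16}, the paper simply refers to Section~2.2 of \cite{JMS} and gives no argument; your sketch via a Ne\v{c}as inequality and an $\varepsilon$-uniform Bogovskii operator is the standard route and, unlike the paper, you correctly flag that the substantive work is establishing the uniformity of the divergence right-inverse across the two-scale geometry. That point is genuinely nontrivial (note also that $\Omega_{1}^{\varepsilon}$ is disconnected, so the single normalization in \eqref{2.13} and the reconstruction-from-gradient argument require care component by component), but since the paper itself outsources the entire step, your proposal is no less complete than the paper's own proof and is more explicit about where the difficulty lies.
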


\begin{proof}
We multiply (\ref{2.1}) and (\ref{2.3}) respectively by $\boldsymbol{u}%
_{\varepsilon }$ and $\boldsymbol{v}_{\varepsilon }$. Then denoting by $\ast 
$ the convolution with respect to the time variable $t$, it holds that 
\begin{equation*}
\begin{array}{l}
\left( \rho _{1}^{\varepsilon }\frac{\partial \boldsymbol{u}_{\varepsilon }}{%
\partial t},\boldsymbol{u}_{\varepsilon }\right) +\left( \rho
_{2}^{\varepsilon }\frac{\partial \boldsymbol{v}_{\varepsilon }}{\partial t},%
\boldsymbol{v}_{\varepsilon }\right) +\left( A_{0}^{\varepsilon }\nabla 
\boldsymbol{u}_{\varepsilon }+A_{1}^{\varepsilon }\ast \nabla \boldsymbol{u}%
_{\varepsilon },\nabla \boldsymbol{u}_{\varepsilon }\right) \\ 
\\ 
\ \ \ \ \ +\left( B_{0}^{\varepsilon }\nabla \boldsymbol{v}_{\varepsilon
}+B_{1}^{\varepsilon }\ast \nabla \boldsymbol{v}_{\varepsilon },\nabla 
\boldsymbol{v}_{\varepsilon }\right) =(\rho _{1}^{\varepsilon }f_{1},%
\boldsymbol{u}_{\varepsilon })+(\rho _{2}^{\varepsilon }f_{2},\boldsymbol{v}%
_{\varepsilon }),%
\end{array}%
\end{equation*}%
or equivalently, 
\begin{equation*}
\begin{array}{l}
\frac{1}{2}\frac{d}{dt}\left\Vert (\rho _{1}^{\varepsilon })^{\frac{1}{2}}%
\boldsymbol{u}_{\varepsilon }(t)\right\Vert _{L^{2}(\Omega _{1}^{\varepsilon
})}^{2}+\frac{1}{2}\frac{d}{dt}\left\Vert (\rho _{2}^{\varepsilon })^{\frac{1%
}{2}}\boldsymbol{v}_{\varepsilon }(t)\right\Vert _{L^{2}(\Omega
_{2}^{\varepsilon })}^{2} \\ 
\\ 
\ \ +\left( A_{0}^{\varepsilon }\nabla \boldsymbol{u}_{\varepsilon
}+A_{1}^{\varepsilon }\ast \nabla \boldsymbol{u}_{\varepsilon },\nabla 
\boldsymbol{u}_{\varepsilon }\right) +\left( B_{0}^{\varepsilon }\nabla 
\boldsymbol{v}_{\varepsilon }+B_{1}^{\varepsilon }\ast \nabla \boldsymbol{v}%
_{\varepsilon },\nabla \boldsymbol{v}_{\varepsilon }\right) \\ 
\\ 
\ \ \ \ \ =(\rho _{1}^{\varepsilon }f_{1},\boldsymbol{u}_{\varepsilon
})+(\rho _{2}^{\varepsilon }f_{2},\boldsymbol{v}_{\varepsilon }).%
\end{array}%
\end{equation*}%
Integrating with respect to $t$, 
\begin{equation*}
\begin{array}{l}
\left\Vert (\rho _{1}^{\varepsilon })^{\frac{1}{2}}\boldsymbol{u}%
_{\varepsilon }(t)\right\Vert _{L^{2}(\Omega _{1}^{\varepsilon
})}^{2}+\left\Vert (\rho _{2}^{\varepsilon })^{\frac{1}{2}}\boldsymbol{v}%
_{\varepsilon }(t)\right\Vert _{L^{2}(\Omega _{2}^{\varepsilon })}^{2} \\ 
\\ 
\ \ +2\int_{0}^{t}\left( A_{0}^{\varepsilon }\nabla \boldsymbol{u}%
_{\varepsilon }+A_{1}^{\varepsilon }\ast \nabla \boldsymbol{u}_{\varepsilon
},\nabla \boldsymbol{u}_{\varepsilon }\right) d\tau +2\int_{0}^{t}\left(
B_{0}^{\varepsilon }\nabla \boldsymbol{v}_{\varepsilon }+B_{1}^{\varepsilon
}\ast \nabla \boldsymbol{v}_{\varepsilon },\nabla \boldsymbol{v}%
_{\varepsilon }\right) d\tau \\ 
\\ 
\ \ \ \ \ =2\int_{0}^{t}\left[ (\rho _{1}^{\varepsilon }f_{1},\boldsymbol{u}%
_{\varepsilon })+(\rho _{2}^{\varepsilon }f_{2},\boldsymbol{v}_{\varepsilon
})\right] d\tau +\left\Vert (\rho _{1}^{\varepsilon })^{\frac{1}{2}}%
\boldsymbol{u}^{0}\right\Vert _{L^{2}(\Omega _{1}^{\varepsilon
})}^{2}+\left\Vert (\rho _{2}^{\varepsilon })^{\frac{1}{2}}\boldsymbol{v}%
^{0}\right\Vert _{L^{2}(\Omega _{2}^{\varepsilon })}^{2}.%
\end{array}%
\end{equation*}%
But 
\begin{eqnarray*}
&&2\int_{0}^{t}\left[ (\rho _{1}^{\varepsilon }f_{1},\boldsymbol{u}%
_{\varepsilon })+(\rho _{2}^{\varepsilon }f_{2},\boldsymbol{v}_{\varepsilon
})\right] d\tau \\
&\leq &\int_{0}^{t}\left( \left\Vert (\rho _{1}^{\varepsilon })^{\frac{1}{2}%
}f_{1}(\tau )\right\Vert _{L^{2}(\Omega _{1}^{\varepsilon })}^{2}+\left\Vert
(\rho _{2}^{\varepsilon })^{\frac{1}{2}}f_{2}(\tau )\right\Vert
_{L^{2}(\Omega _{2}^{\varepsilon })}^{2}\right) d\tau \\
&&+\int_{0}^{t}\left( \left\Vert (\rho _{1}^{\varepsilon })^{\frac{1}{2}}%
\boldsymbol{u}_{\varepsilon }(\tau )\right\Vert _{L^{2}(\Omega
_{1}^{\varepsilon })}^{2}+\left\Vert (\rho _{2}^{\varepsilon })^{\frac{1}{2}}%
\boldsymbol{v}_{\varepsilon }(\tau )\right\Vert _{L^{2}(\Omega
_{2}^{\varepsilon })}^{2}\right) d\tau .
\end{eqnarray*}%
Making use of (\textbf{A1}) we get 
\begin{eqnarray*}
&&\left\Vert (\rho _{1}^{\varepsilon })^{\frac{1}{2}}\boldsymbol{u}%
_{\varepsilon }(t)\right\Vert _{L^{2}(\Omega _{1}^{\varepsilon
})}^{2}+\left\Vert (\rho _{2}^{\varepsilon })^{\frac{1}{2}}\boldsymbol{v}%
_{\varepsilon }(t)\right\Vert _{L^{2}(\Omega _{2}^{\varepsilon })}^{2} \\
&&+2\alpha \int_{0}^{t}\left( \left\Vert \nabla \boldsymbol{u}_{\varepsilon
}(\tau )\right\Vert _{L^{2}(\Omega _{1}^{\varepsilon })}^{2}+\left\Vert
\nabla \boldsymbol{v}_{\varepsilon }(\tau )\right\Vert _{L^{2}(\Omega
_{2}^{\varepsilon })}^{2}\right) d\tau \\
&\leq &-2\int_{0}^{t}\left[ \left( A_{1}^{\varepsilon }\ast \nabla 
\boldsymbol{u}_{\varepsilon },\nabla \boldsymbol{u}_{\varepsilon }\right)
+\left( B_{1}^{\varepsilon }\ast \nabla \boldsymbol{v}_{\varepsilon },\nabla 
\boldsymbol{v}_{\varepsilon }\right) \right] d\tau \\
&&\ \ +\int_{0}^{t}\left( \left\Vert (\rho _{1}^{\varepsilon })^{\frac{1}{2}}%
\boldsymbol{u}_{\varepsilon }(\tau )\right\Vert _{L^{2}(\Omega
_{1}^{\varepsilon })}^{2}+\left\Vert (\rho _{2}^{\varepsilon })^{\frac{1}{2}}%
\boldsymbol{v}_{\varepsilon }(\tau )\right\Vert _{L^{2}(\Omega
_{2}^{\varepsilon })}^{2}\right) d\tau \\
&&\ \ \ \ \ +\int_{0}^{T}\left( \left\Vert (\rho _{1}^{\varepsilon })^{\frac{%
1}{2}}f_{1}(t)\right\Vert _{L^{2}(\Omega _{1}^{\varepsilon
})}^{2}+\left\Vert (\rho _{2}^{\varepsilon })^{\frac{1}{2}%
}f_{2}(t)\right\Vert _{L^{2}(\Omega _{2}^{\varepsilon })}^{2}\right) dt.
\end{eqnarray*}%
Now, using Young's inequality, 
\begin{eqnarray*}
&&2\int_{0}^{t}\left( A_{1}^{\varepsilon }\ast \nabla \boldsymbol{u}%
_{\varepsilon },\nabla \boldsymbol{u}_{\varepsilon }\right) d\tau \\
&=&2\int_{0}^{t}\left( \int_{0}^{\tau }\left( \int_{\Omega _{1}^{\varepsilon
}}A_{1}^{\varepsilon }(\tau -s)\nabla \boldsymbol{u}_{\varepsilon }(s)\cdot
\nabla \boldsymbol{u}_{\varepsilon }(\tau )dx)\right) ds\right) d\tau \\
&\leq &2\int_{0}^{t}\left( C\int_{0}^{\tau }\left\Vert A_{1}^{\varepsilon
}(\tau -s)\nabla \boldsymbol{u}_{\varepsilon }(s)\right\Vert _{L^{2}(\Omega
_{1}^{\varepsilon })}^{2}ds+\int_{0}^{\tau }\frac{\alpha }{2\tau }\left\Vert
\nabla \boldsymbol{u}_{\varepsilon }(\tau )\right\Vert _{L^{2}(\Omega
_{1}^{\varepsilon })}^{2}ds\right) d\tau \\
&\leq &\alpha \int_{0}^{t}\left\Vert \nabla \boldsymbol{u}_{\varepsilon
}(\tau )\right\Vert _{L^{2}(\Omega _{1}^{\varepsilon })}^{2}d\tau
+C\int_{0}^{t}\left( \int_{0}^{\tau }\left\Vert \nabla \boldsymbol{u}%
_{\varepsilon }(s)\right\Vert _{L^{2}(\Omega _{1}^{\varepsilon
})}^{2}ds\right) d\tau ,
\end{eqnarray*}%
hence 
\begin{eqnarray*}
&&2\int_{0}^{t}\left[ \left( A_{1}^{\varepsilon }\ast \nabla \boldsymbol{u}%
_{\varepsilon },\nabla \boldsymbol{u}_{\varepsilon }\right) +\left(
B_{1}^{\varepsilon }\ast \nabla \boldsymbol{v}_{\varepsilon },\nabla 
\boldsymbol{v}_{\varepsilon }\right) \right] d\tau \\
&\leq &\alpha \int_{0}^{t}\left( \left\Vert \nabla \boldsymbol{u}%
_{\varepsilon }(\tau )\right\Vert _{L^{2}(\Omega _{1}^{\varepsilon
})}^{2}+\left\Vert \nabla \boldsymbol{v}_{\varepsilon }(\tau )\right\Vert
_{L^{2}(\Omega _{2}^{\varepsilon })}^{2}\right) d\tau \\
&&+C\int_{0}^{t}\left( \int_{0}^{\tau }\left( \left\Vert \nabla \boldsymbol{u%
}_{\varepsilon }(s)\right\Vert _{L^{2}(\Omega _{1}^{\varepsilon
})}^{2}+\left\Vert \nabla \boldsymbol{v}_{\varepsilon }(s)\right\Vert
_{L^{2}(\Omega _{2}^{\varepsilon })}^{2}\right) ds\right) d\tau .
\end{eqnarray*}%
Therefore 
\begin{eqnarray*}
&&\left\Vert (\rho _{1}^{\varepsilon })^{\frac{1}{2}}\boldsymbol{u}%
_{\varepsilon }(t)\right\Vert _{L^{2}(\Omega _{1}^{\varepsilon
})}^{2}+\left\Vert (\rho _{2}^{\varepsilon })^{\frac{1}{2}}\boldsymbol{v}%
_{\varepsilon }(t)\right\Vert _{L^{2}(\Omega _{2}^{\varepsilon })}^{2} \\
&&+\alpha \int_{0}^{t}\left( \left\Vert \nabla \boldsymbol{u}_{\varepsilon
}(\tau )\right\Vert _{L^{2}(\Omega _{1}^{\varepsilon })}^{2}+\left\Vert
\nabla \boldsymbol{v}_{\varepsilon }(\tau )\right\Vert _{L^{2}(\Omega
_{2}^{\varepsilon })}^{2}\right) d\tau \\
&\leq &C+\int_{0}^{t}\left( \left\Vert (\rho _{1}^{\varepsilon })^{\frac{1}{2%
}}\boldsymbol{u}_{\varepsilon }(\tau )\right\Vert _{L^{2}(\Omega
_{1}^{\varepsilon })}^{2}+\left\Vert (\rho _{2}^{\varepsilon })^{\frac{1}{2}}%
\boldsymbol{v}_{\varepsilon }(\tau )\right\Vert _{L^{2}(\Omega
_{2}^{\varepsilon })}^{2}\right) d\tau \\
&&+C\int_{0}^{t}\left( \int_{0}^{\tau }\left( \left\Vert \nabla \boldsymbol{u%
}_{\varepsilon }(s)\right\Vert _{L^{2}(\Omega _{1}^{\varepsilon
})}^{2}+\left\Vert \nabla \boldsymbol{v}_{\varepsilon }(s)\right\Vert
_{L^{2}(\Omega _{2}^{\varepsilon })}^{2}\right) ds\right) d\tau .
\end{eqnarray*}%
It follows from Gronwall's inequality that 
\begin{equation*}
\begin{array}{l}
\left\Vert (\rho _{1}^{\varepsilon })^{\frac{1}{2}}\boldsymbol{u}%
_{\varepsilon }(t)\right\Vert _{L^{2}(\Omega _{1}^{\varepsilon
})}^{2}+\left\Vert (\rho _{2}^{\varepsilon })^{\frac{1}{2}}\boldsymbol{v}%
_{\varepsilon }(t)\right\Vert _{L^{2}(\Omega _{2}^{\varepsilon })}^{2} \\ 
\\ 
\ \ \ +\alpha \int_{0}^{t}\left( \left\Vert \nabla \boldsymbol{u}%
_{\varepsilon }(\tau )\right\Vert _{L^{2}(\Omega _{1}^{\varepsilon
})}^{2}+\left\Vert \nabla \boldsymbol{v}_{\varepsilon }(\tau )\right\Vert
_{L^{2}(\Omega _{2}^{\varepsilon })}^{2}\right) d\tau \leq C%
\end{array}%
\end{equation*}%
for all $0\leq t\leq T$ and all $\varepsilon >0$, where $C$ is independent
of $\varepsilon $ and $t$. We therefore deduce (\ref{2.14}) and (\ref{2.15}%
). (\ref{2.17}) follows immediately, and (\ref{2.16}) is obtained by
repeating the same arguments used in \cite[Section 2.2]{JMS}
\end{proof}

The next result deals with the compactness of the global velocity defined
below by 
\begin{equation}
\boldsymbol{u}^{\varepsilon }=\chi _{1}^{\varepsilon }\boldsymbol{u}%
_{\varepsilon }+\chi _{2}^{\varepsilon }\boldsymbol{v}_{\varepsilon }.
\label{2.21}
\end{equation}%
$\ \ $ We also set $\rho ^{\varepsilon }=\chi _{1}^{\varepsilon }\rho
_{1}^{\varepsilon }+\chi _{2}^{\varepsilon }\rho _{2}^{\varepsilon }$,
recalling that $\chi _{j}^{\varepsilon }$ ($j=1,2$) denotes the
characteristic function of the open set $\Omega _{j}^{\varepsilon }$.

\begin{proposition}
\label{p2.2}Assume that the sequence $(\rho ^{\varepsilon })_{\varepsilon
>0} $ weakly $\ast $-converges in $L^{\infty }(\Omega )$ to some real
function $\rho $ as $\varepsilon \rightarrow 0$, with $\rho (x)\neq 0$ for
a.e. $x\in \Omega $. Then the sequence $(\boldsymbol{u}^{\varepsilon
})_{\varepsilon >0} $ is relatively compact in the space $L^{2}(Q)^{N}$.
\end{proposition}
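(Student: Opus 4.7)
The plan is to first establish that $\boldsymbol{u}^{\varepsilon}$ has global $H_0^1$-regularity on $\Omega$, then derive a uniform dual-space bound on $\partial_t(\rho^{\varepsilon}\boldsymbol{u}^{\varepsilon})$, and finally invoke an Aubin--Lions-type compactness argument adapted to the oscillating multiplier $\rho^{\varepsilon}$.

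For the first two steps: the matching condition $\gamma_1^{\varepsilon}\boldsymbol{u}_{\varepsilon}=\gamma_2^{\varepsilon}\boldsymbol{v}_{\varepsilon}$ on $\Gamma_{12}^{\varepsilon}$ together with the Dirichlet condition (\ref{2.7}) imply that $\boldsymbol{u}^{\varepsilon}\in H_0^1(\Omega)^N$ with $\nabla\boldsymbol{u}^{\varepsilon}=\chi_1^{\varepsilon}\nabla\boldsymbol{u}_{\varepsilon}+\chi_2^{\varepsilon}\nabla\boldsymbol{v}_{\varepsilon}$ and $\Div\boldsymbol{u}^{\varepsilon}=0$. Hence Lemma \ref{l2.1} gives a uniform bound on $\boldsymbol{u}^{\varepsilon}$ in $L^{\infty}(0,T;L^2(\Omega)^N)\cap L^2(0,T;V_0)$, with $V_0:=\{v\in H_0^1(\Omega)^N:\Div v=0\}$. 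Next, any $\varphi\in V_0$ restricts to an admissible test function $(\varphi|_{\Omega_1^{\varepsilon}},\varphi|_{\Omega_2^{\varepsilon}})\in V_{\varepsilon}$ (divergence-free in each subdomain, matching across $\Gamma_{12}^{\varepsilon}$ by the $H^1$-regularity of $\varphi$, vanishing on $\partial\Omega$). Substituting into (\ref{2.9}) and estimating via (\textbf{A}1), (\ref{2.15}) and Young's convolution inequality (exactly as in the proof of Lemma \ref{l2.1}), one obtains
\begin{equation*}
\left|\frac{d}{dt}\int_{\Omega}\rho^{\varepsilon}\boldsymbol{u}^{\varepsilon}(t)\cdot\varphi\,dx\right|\leq K_{\varepsilon}(t)\,\|\nabla\varphi\|_{L^2(\Omega)},
\end{equation*}
with $K_{\varepsilon}$ uniformly bounded in $L^2(0,T)$; hence $\partial_t(\rho^{\varepsilon}\boldsymbol{u}^{\varepsilon})$ is uniformly bounded in $L^2(0,T;V_0')$.

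The main step is the compactness itself. The bound on $\partial_t(\rho^{\varepsilon}\boldsymbol{u}^{\varepsilon})$ yields the equicontinuity estimate $\|\rho^{\varepsilon}\boldsymbol{u}^{\varepsilon}(t)-\rho^{\varepsilon}\boldsymbol{u}^{\varepsilon}(s)\|_{V_0'}\leq C|t-s|^{1/2}$; combining this with the uniform $L^{\infty}(0,T;L^2)$-bound on $\rho^{\varepsilon}\boldsymbol{u}^{\varepsilon}$ and the compact embedding $L^2(\Omega)^N\hookrightarrow V_0'$ (Rellich), Arzel\`a--Ascoli yields, along a subsequence, $\rho^{\varepsilon}\boldsymbol{u}^{\varepsilon}\to\mu$ in $\mathcal{C}([0,T];V_0')$. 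The main obstacle is now to pass from this strong convergence of the \emph{product} to strong convergence of $\boldsymbol{u}^{\varepsilon}$ itself, because the classical Aubin--Lions lemma does not accommodate the weakly-$\ast$ convergent multiplier $\rho^{\varepsilon}$ directly. I would close the argument by expanding
\begin{equation*}
\int_Q\rho^{\varepsilon}|\boldsymbol{u}^{\varepsilon}-\boldsymbol{u}|^2\,dxdt=\int_Q\rho^{\varepsilon}|\boldsymbol{u}^{\varepsilon}|^2\,dxdt-2\int_Q\rho^{\varepsilon}\boldsymbol{u}^{\varepsilon}\cdot\boldsymbol{u}\,dxdt+\int_Q\rho^{\varepsilon}|\boldsymbol{u}|^2\,dxdt
\end{equation*}
and passing to the limit term by term: the first two both converge to $\int_0^T\langle\mu(t),\boldsymbol{u}(t)\rangle_{V_0',V_0}\,dt$ by the strong--weak pairing of $\rho^{\varepsilon}\boldsymbol{u}^{\varepsilon}$ in $L^2(0,T;V_0')$ with $\boldsymbol{u}^{\varepsilon}\rightharpoonup\boldsymbol{u}$ in $L^2(0,T;V_0)$, while the third converges to $\int_Q\rho|\boldsymbol{u}|^2\,dxdt$ by weak-$\ast$ convergence of $\rho^{\varepsilon}$ against $|\boldsymbol{u}|^2\in L^1(Q)$. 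The delicate final ingredient is the identification $\mu=\rho\boldsymbol{u}$, which I expect to require a separate density-type argument exploiting the coercivity $\rho^{\varepsilon}\geq\Lambda^{-1}$ and smooth-in-time test functions, and which makes the three limits collapse. The pointwise lower bound $\rho^{\varepsilon}\geq\Lambda^{-1}$ then upgrades $\int_Q\rho^{\varepsilon}|\boldsymbol{u}^{\varepsilon}-\boldsymbol{u}|^2\to 0$ into the desired strong convergence $\boldsymbol{u}^{\varepsilon}\to\boldsymbol{u}$ in $L^2(Q)^N$.
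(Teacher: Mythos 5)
Your setup is sound: the matching and boundary conditions do give $\boldsymbol{u}^{\varepsilon}\in L^{2}(0,T;H_{0}^{1}(\Omega)^{N})$ with a uniform bound, restriction of $V_{0}$-test functions into $V_{\varepsilon}$ is admissible, and the resulting uniform bound on $\partial_{t}(\rho^{\varepsilon}\boldsymbol{u}^{\varepsilon})$ in $L^{2}(0,T;V_{0}')$ is correct. The Arzel\`a--Ascoli step yielding $\rho^{\varepsilon}\boldsymbol{u}^{\varepsilon}\to\mu$ in $\mathcal{C}([0,T];V_{0}')$ is also fine. But the argument then collapses at the ``delicate final ingredient'' you flag yourself: the identification $\mu=\rho\boldsymbol{u}$ is not a side lemma that a density argument will supply --- it is logically \emph{equivalent} to the compactness you are trying to prove. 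Indeed, if $\boldsymbol{u}^{\varepsilon}\to\boldsymbol{u}$ strongly in $L^{2}(Q)^{N}$ then $\rho^{\varepsilon}\boldsymbol{u}^{\varepsilon}\rightharpoonup\rho\boldsymbol{u}$ by the weak-$\ast$/strong pairing, so $\mu=\rho\boldsymbol{u}$; conversely, from your expansion, $\mu=\rho\boldsymbol{u}$ makes the three limits collapse and the lower bound $\rho^{\varepsilon}\geq\Lambda^{-1}$ gives strong convergence. Without the strong convergence you cannot pass to the limit in $\int_{Q}\rho^{\varepsilon}\boldsymbol{u}^{\varepsilon}\cdot\varphi\,dx\,dt=\int_{Q}\boldsymbol{u}^{\varepsilon}\cdot(\rho^{\varepsilon}\varphi)\,dx\,dt$ because it is then a product of two merely weakly convergent sequences; no coercivity of $\rho^{\varepsilon}$ breaks this circle. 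So the proof as written begs the question.

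The way to avoid the unidentifiable limit is not to pass to the limit at all but to estimate translates directly, which is what the reference the paper points to (Proposition 2.1 of \cite{JMS2}) essentially does. With $\tau_{h}$ the time shift, write
\begin{equation*}
\int_{0}^{T-h}\!\!\int_{\Omega}\rho^{\varepsilon}\bigl|\tau_{h}\boldsymbol{u}^{\varepsilon}-\boldsymbol{u}^{\varepsilon}\bigr|^{2}dx\,dt
=\int_{0}^{T-h}\bigl\langle\tau_{h}(\rho^{\varepsilon}\boldsymbol{u}^{\varepsilon})-\rho^{\varepsilon}\boldsymbol{u}^{\varepsilon},\ \tau_{h}\boldsymbol{u}^{\varepsilon}-\boldsymbol{u}^{\varepsilon}\bigr\rangle_{V_{0}',V_{0}}\,dt,
\end{equation*}
using that $\rho^{\varepsilon}$ is time-independent; the first factor is $O(h^{1/2})$ in $L^{2}(0,T-h;V_{0}')$ by the bound on $\partial_{t}(\rho^{\varepsilon}\boldsymbol{u}^{\varepsilon})$, the second is bounded in $L^{2}(0,T;V_{0})$, and the lower bound $\rho^{\varepsilon}\geq\Lambda^{-1}$ converts this into $\|\tau_{h}\boldsymbol{u}^{\varepsilon}-\boldsymbol{u}^{\varepsilon}\|_{L^{2}(Q_{h})^{N}}\leq Ch^{1/4}$ uniformly in $\varepsilon$. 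Combined with the small spatial translates coming from the uniform $L^{2}(0,T;H_{0}^{1}(\Omega)^{N})$ bound, the Fr\'echet--Kolmogorov--Riesz criterion gives relative compactness of $(\boldsymbol{u}^{\varepsilon})$ in $L^{2}(Q)^{N}$ directly, and your expansion, limit identification, and the circular step are all unnecessary. Your bound $\partial_{t}(\rho^{\varepsilon}\boldsymbol{u}^{\varepsilon})\in L^{2}(0,T;V_{0}')$ is exactly the right intermediate estimate --- it is only the final compactness mechanism that should be changed.
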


\begin{proof}
The proof is copied on that of \cite[Proposition 2.1]{JMS2}.
\end{proof}

\section{Multiscale convergence and related convolution results}

The letter $E$ denotes throughout any ordinary sequence $(\varepsilon
_{n})_{n\in \mathbb{N}}$ with $0<\varepsilon _{n}\leq 1$ such that $%
\varepsilon _{n}\rightarrow 0$ as $n\rightarrow \infty $. $\varepsilon $
will denote a generic element of $E$, and "$\varepsilon _{n}\rightarrow 0$
as $n\rightarrow \infty $" will henceforth be merely denoted by "$%
\varepsilon \rightarrow 0$". We assume throughout this section that $\Omega $
is an open subset of $\mathbb{R}^{N}$. We also set $Y=Z=[0,1)^{N}$.

\begin{definition}
\label{d3.1}\emph{Let }$1\leq p<\infty $\emph{. (1) A sequence }$%
(u_{\varepsilon })_{\varepsilon >0}\subset L^{p}(\Omega )$\emph{\ is said to}
weakly multiscale converge in $L^{p}(\Omega )$ to some $u_{0}\in
L^{p}(\Omega \times Y\times Z)$\emph{\ if as }$\varepsilon \rightarrow 0$, 
\begin{equation*}
\int_{\Omega }u_{\varepsilon }(x)\psi \left( x,\frac{x}{\varepsilon },\frac{x%
}{\varepsilon ^{2}}\right) dx\rightarrow \iiint_{\Omega \times Y\times
Z}u_{0}(x,y,z)\psi (x,y,z)dxdydz
\end{equation*}%
\emph{for every }$\psi \in L^{p^{\prime }}(\Omega ;\mathcal{C}_{\text{\emph{%
per}}}(Y\times Z))$\emph{\ (}$1/p^{\prime }=1-1/p$\emph{). We express this
by writing }$u_{\varepsilon }\rightarrow u_{0}$\emph{\ reit. in }$%
L^{p}(\Omega )$\emph{-weak.}

\noindent \emph{(2) The sequence }$(u_{\varepsilon })_{\varepsilon
>0}\subset L^{p}(\Omega )$\emph{\ is said to} strongly multiscale converge
in $L^{p}(\Omega )$ to some $u_{0}\in L^{p}(\Omega \times Y\times Z)$\emph{\
if it is weakly multiscale convergent towards }$u_{0}$\emph{\ and further
satisfies the following condition: }%
\begin{equation*}
\left\Vert u_{\varepsilon }\right\Vert _{L^{p}(\Omega )}\rightarrow
\left\Vert u_{0}\right\Vert _{L^{p}(\Omega \times Y\times Z)}\text{\emph{\
as }}\varepsilon \rightarrow \emph{0}\text{\emph{.}}
\end{equation*}%
\emph{We denote this by writing }$u_{\varepsilon }\rightarrow u_{0}$\emph{\
reit. in }$L^{p}(\Omega )$\emph{-strong.}
\end{definition}

The above definition has been introduced in \cite{AB} for the case $p=2$ and
later generalized to any $1<p<\infty $ in \cite{LLW} (see also \cite{LLW1,
LNW}).

The following two results are worth recalling; see e.g. \cite[Theorems 3.1
and 3.5]{CMP} (see also \cite{AB} for the case $p=2$).

\begin{theorem}
\label{t3.1}Any bounded sequence $(u_{\varepsilon })_{\varepsilon \in
E}\subset L^{p}(\Omega )$ \emph{(}$1<p<\infty $\emph{)} possesses a
subsequence which is weakly multiscale convergent in $L^{p}(\Omega )$.
\end{theorem}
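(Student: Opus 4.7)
The plan is to prove this by duality plus Banach--Alaoglu, following the classical two-scale compactness argument adapted to the reiterated setting. The functional-analytic setup is to regard each $u_\varepsilon$ as a continuous linear functional on the separable Banach space $\mathcal{F} = L^{p'}(\Omega;\mathcal{C}_{\text{per}}(Y\times Z))$ via
\begin{equation*}
L_\varepsilon(\psi) = \int_\Omega u_\varepsilon(x)\,\psi\!\left(x,\tfrac{x}{\varepsilon},\tfrac{x}{\varepsilon^2}\right)dx,
\end{equation*}
and to show these functionals are uniformly bounded, extract a weak-$*$ limit, and represent the limit as integration against a function $u_0\in L^p(\Omega\times Y\times Z)$.

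First I would record the periodic mean-value lemma: for any $\psi\in\mathcal{F}$, the sequence $\psi_\varepsilon(x)=\psi(x,x/\varepsilon,x/\varepsilon^2)$ is measurable, belongs to $L^{p'}(\Omega)$, and satisfies
\begin{equation*}
\int_\Omega |\psi_\varepsilon(x)|^{p'}dx \;\longrightarrow\; \iiint_{\Omega\times Y\times Z}|\psi(x,y,z)|^{p'}\,dx\,dy\,dz \quad \text{as }\varepsilon\to 0.
\end{equation*}
This is the standard reiterated version of the Riemann--Lebesgue lemma for admissible test functions; separability of the scales $\varepsilon$ and $\varepsilon^2$ (ratio $\to 0$) allows handling the two fast variables independently. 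In particular, for $\varepsilon$ sufficiently small, $\|\psi_\varepsilon\|_{L^{p'}(\Omega)} \leq 2\|\psi\|_{L^{p'}(\Omega\times Y\times Z)} \leq C\|\psi\|_{\mathcal{F}}$, whence by H\"older
\begin{equation*}
|L_\varepsilon(\psi)|\leq \|u_\varepsilon\|_{L^p(\Omega)}\|\psi_\varepsilon\|_{L^{p'}(\Omega)} \leq C\sup_{\varepsilon\in E}\|u_\varepsilon\|_{L^p(\Omega)}\cdot\|\psi\|_{\mathcal{F}}.
\end{equation*}

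Next, since $\mathcal{F}$ is separable (it is a Bochner space with separable scalar part and separable target, $\mathcal{C}_{\text{per}}(Y\times Z)$ being separable), Banach--Alaoglu guarantees a subsequence $(L_{\varepsilon_n})$ converging weak-$*$ in $\mathcal{F}'$ to some $L\in\mathcal{F}'$. Passing to the limit in the uniform bound above and using the mean-value convergence of $\|\psi_\varepsilon\|_{L^{p'}(\Omega)}$, the limit satisfies
\begin{equation*}
|L(\psi)| \leq C\,\|\psi\|_{L^{p'}(\Omega\times Y\times Z)} \qquad \text{for every }\psi\in\mathcal{F}.
\end{equation*}

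The final step is to identify $L$. Because $\mathcal{F}$ embeds continuously and densely in $L^{p'}(\Omega\times Y\times Z)$ (the continuous $Y\times Z$-periodic functions are dense in $L^{p'}(Y\times Z)$, combined with the density of simple tensors in Bochner spaces), the bound above lets $L$ extend uniquely to a continuous linear functional on $L^{p'}(\Omega\times Y\times Z)$; by the Riesz representation theorem there exists $u_0\in L^p(\Omega\times Y\times Z)$ with $L(\psi)=\iiint u_0\,\psi$. This is precisely the definition of weak reiterated multiscale convergence of $(u_{\varepsilon_n})$ to $u_0$.

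The main obstacle I expect is the mean-value step, since it is the place where the specific two-scale separation $\varepsilon,\varepsilon^2$ is used and where continuity (rather than just measurability) of $\psi$ in $(y,z)$ is essential; once this is in hand the remainder reduces to abstract functional analysis. The density statement $\overline{\mathcal{F}}=L^{p'}(\Omega\times Y\times Z)$ is routine but must be explicitly invoked so that $L$ extends without ambiguity, and I would cite the argument of \cite{CMP} or reproduce the simple tensor density argument as needed.
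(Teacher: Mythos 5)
Your argument is correct, and it is in fact the standard compactness-by-duality proof that the paper implicitly relies on: the paper itself gives no proof of Theorem~\ref{t3.1}, but refers the reader to \cite{CMP} and \cite{AB}, and the argument in those references is exactly the one you reconstruct (uniform boundedness of the functionals $L_\varepsilon$ on the separable space $L^{p'}(\Omega;\mathcal{C}_{\text{per}}(Y\times Z))$ via the reiterated Riemann--Lebesgue mean-value lemma, extraction of a weak-$*$ limit by Banach--Alaoglu, the bound $|L(\psi)|\leq C\|\psi\|_{L^{p'}(\Omega\times Y\times Z)}$ obtained by letting $\varepsilon\to 0$ in the H\"older estimate, and then extension by density of $\mathcal{C}_{\text{per}}(Y\times Z)$ in $L^{p'}_{\text{per}}(Y\times Z)$ followed by Riesz representation, which needs $1<p<\infty$). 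You correctly isolate the mean-value lemma as the sole place where the specific scale separation $\varepsilon,\varepsilon^2$ and the continuity of $\psi$ in the fast variables enter; the measurability of $x\mapsto\psi(x,x/\varepsilon,x/\varepsilon^2)$ is the usual technical point to be settled by choosing a strongly measurable representative, which you flag but do not belabor, as is customary.
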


\begin{theorem}
\label{t3.2}Let $(u_{\varepsilon })_{\varepsilon \in E}$ be a bounded in $%
W^{1,p}(\Omega )$ \emph{(}$1<p<\infty $\emph{)}. Then there exist a
subsequence $E^{\prime }$ of $E$ and a triple of functions $u_{0}\in
W^{1,p}(\Omega )$, $u_{1}\in L^{p}(\Omega ;W_{\#}^{1,p}(Y))$ and $u_{2}\in
L^{p}(\Omega \times Y;W_{\#}^{1,p}(Z))$ such that, as $E^{\prime }\ni
\varepsilon \rightarrow 0$, 
\begin{equation*}
u_{\varepsilon }\rightarrow u_{0}\text{ in }W^{1,p}(\Omega )\text{-weak}
\end{equation*}%
\begin{equation*}
\frac{\partial u_{\varepsilon }}{\partial x_{i}}\rightarrow \frac{\partial
u_{0}}{\partial x_{i}}+\frac{\partial u_{1}}{\partial y_{i}}+\frac{\partial
u_{2}}{\partial z_{i}}\text{ reit. in }L^{p}(\Omega )\text{-weak, }1\leq
i\leq N.
\end{equation*}
\end{theorem}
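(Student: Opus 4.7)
I plan to follow the Allaire--Briane identification strategy for reiterated two-scale convergence: extract multiscale limits via Theorem \ref{t3.1}, then pin down their structure via a two-level De Rham argument applied to divergence-free periodic test fields.

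First, I would extract subsequences. Weak compactness of bounded sets in the reflexive space $W^{1,p}(\Omega)$ yields $E_1 \subset E$ and $u_0 \in W^{1,p}(\Omega)$ with $u_\varepsilon \rightharpoonup u_0$ in $W^{1,p}(\Omega)$-weak; Rellich--Kondrachov then upgrades this to strong $L^p(\Omega)$ convergence. Each component $\partial u_\varepsilon/\partial x_i$ being bounded in $L^p(\Omega)$, Theorem \ref{t3.1} applied componentwise extracts a further $E' \subset E_1$ and a vector field $v \in L^p(\Omega \times Y \times Z)^N$ with $\partial u_\varepsilon/\partial x_i \to v_i$ reit.\ in $L^p(\Omega)$-weak. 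The heart of the proof is to show $v(x,y,z) = \nabla u_0(x) + \nabla_y u_1(x,y) + \nabla_z u_2(x,y,z)$ for suitable $u_1 \in L^p(\Omega; W^{1,p}_\#(Y))$ and $u_2 \in L^p(\Omega \times Y; W^{1,p}_\#(Z))$.

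I would identify $u_1$ first. Testing against $\Phi \in \mathcal{D}(\Omega; \mathcal{C}_{\text{per}}^\infty(Y))^N$ with $\operatorname{div}_y \Phi = 0$ and integrating by parts, the singular $\varepsilon^{-1} \operatorname{div}_y \Phi$ term disappears, leaving
\begin{equation*}
\int_\Omega \nabla u_\varepsilon \cdot \Phi(x, x/\varepsilon)\,dx = -\int_\Omega u_\varepsilon \operatorname{div}_x \Phi(x, x/\varepsilon)\,dx.
\end{equation*}
Weak multiscale convergence on the left, and strong $L^p$-convergence of $u_\varepsilon$ combined with two-scale convergence of the oscillating integrand on the right, yield in the limit
\begin{equation*}
\iint_{\Omega \times Y} \left(\bar v(x,y) - \nabla u_0(x)\right) \cdot \Phi(x, y)\,dxdy = 0,
\end{equation*}
where $\bar v(x, y) := \int_Z v(x, y, z)\,dz$. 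The classical one-scale De Rham characterization of $\operatorname{div}_y$-free fields (applied fiber-wise in $x$) then produces $u_1 \in L^p(\Omega; W^{1,p}_\#(Y))$ with $\bar v(x, y) = \nabla u_0(x) + \nabla_y u_1(x, y)$. For $u_2$, I set $w := v - \nabla u_0 - \nabla_y u_1$ (so $\int_Z w\,dz = 0$ by the preceding step) and test against $\Phi \in \mathcal{D}(\Omega) \otimes \mathcal{C}_{\text{per}}^\infty(Y \times Z)^N$ with \emph{both} $\operatorname{div}_y \Phi = 0$ and $\operatorname{div}_z \Phi = 0$. Under these twin constraints, the chain-rule expansion $\operatorname{div}(\Phi(x, x/\varepsilon, x/\varepsilon^2)) = \operatorname{div}_x \Phi + \varepsilon^{-1}\operatorname{div}_y \Phi + \varepsilon^{-2}\operatorname{div}_z \Phi$ collapses to its $\operatorname{div}_x$ part, the limit passage proceeds as above, and one reaches $\iiint w \cdot \Phi = 0$ for every such $\Phi$. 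A reiterated De Rham argument — noting that the orthogonal complement of the doubly divergence-free class consists of $\nabla_x \alpha + \nabla_y \beta + \nabla_z \gamma$ fields, and that the $z$-mean-zero condition on $w$ forces $\nabla_x \alpha + \nabla_y \beta \equiv 0$ — produces $u_2 \in L^p(\Omega \times Y; W^{1,p}_\#(Z))$ with $w = \nabla_z u_2$.

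The main obstacle is the last De Rham step. Its one-scale analogue is classical, but the two-level version requires a careful characterization of the orthogonal complement of the doubly divergence-free test class within the nested periodic Sobolev setting, together with verification that the resulting $u_2$ inherits precisely the regularity $L^p(\Omega \times Y; W^{1,p}_\#(Z))$. Density of tensor-product test fields within this restricted class — needed to extend the orthogonality relation to the full conclusion — is the technical crux of the proof.
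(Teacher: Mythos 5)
The paper does not prove Theorem~\ref{t3.2}; it cites it from \cite{CMP} (and \cite{AB} for $p=2$), where the argument is the Allaire--Briane reiterated compactness proof. Your outline follows exactly that path, and the first identification step is sound: testing against divergence-free $Y$-periodic $\Phi(x,y)$ and using local Rellich compactness correctly yields $\bar v(x,y)-\nabla u_0(x)=\nabla_y u_1(x,y)$ via the classical one-scale periodic De~Rham argument (applied fiberwise in $x$, which requires observing that $\int_{Y\times Z}v\,dy\,dz=\nabla u_0$ so that the mean condition is consistent with testing against constant vectors).

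Two points, however, deserve attention. First, your statement that the orthogonal complement of the doubly divergence-free test class consists of fields of the form $\nabla_x\alpha+\nabla_y\beta+\nabla_z\gamma$ is not correct: the test functions $\Phi$ are compactly supported but otherwise unconstrained in $x$, so $\iiint\nabla_x\alpha\cdot\Phi\,dx\,dy\,dz=-\iiint\alpha\,\operatorname{div}_x\Phi\,dx\,dy\,dz$ does not vanish for nonconstant $\alpha$; the correct annihilator (for a.e.\ fixed $x$) is $\nabla_y\beta+\nabla_z\gamma$. Your error is self-correcting here since the $z$-mean-zero condition on $w$ would kill both spurious terms anyway, but the stated characterization is wrong as written.

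Second, and more substantively, the De~Rham-type lemma you invoke --- that a field in $L^p(\Omega\times Y\times Z)^N$ orthogonal to all $\Phi$ with $\operatorname{div}_y\Phi=\operatorname{div}_z\Phi=0$ decomposes as $\nabla_y\beta+\nabla_z\gamma$ with $\beta\in L^p(\Omega;W^{1,p}_\#(Y))$ and $\gamma\in L^p(\Omega\times Y;W^{1,p}_\#(Z))$ --- is precisely Lemma~4.14 of \cite{AB}, which the paper itself uses explicitly in the proof of Proposition~\ref{p4.1}. You identify this as the ``technical crux'' and leave it unproved. Note that your two-step decomposition (first identify $u_1$ from the $z$-average, then extract $u_2$ from the remainder) does not actually lighten the load: the $z$-mean-zero condition on $w=v-\nabla u_0-\nabla_y u_1$ cannot be used to reduce the reiterated De~Rham lemma to its fiberwise one-scale version, because a tensor-product test field $\theta(y)\psi(z)$ with $\operatorname{div}_z\psi=0$ is generally not $\operatorname{div}_y$-free unless $\theta$ is constant, so one cannot localize in $y$. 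The full Allaire--Briane structure lemma (applied to $v-\nabla u_0$) is still required, and once you have it, it produces $u_1$ and $u_2$ simultaneously, making your first step redundant. The approach is therefore correct in spirit but leaves the genuine core of the theorem --- the reiterated De~Rham characterization and the density of tensor-product test fields within the constrained class --- as an unproved black box, which is exactly the content of the references the paper cites.
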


The proof of the next result is copied on that of \cite[Theorem 6]{DPDE}.

\begin{theorem}
\label{t3.3}Let $1<p,q<\infty $ and $r\geq 1$ be such that $\frac{1}{r}=%
\frac{1}{p}+\frac{1}{q}\leq 1$. Suppose that $u_{\varepsilon }\rightarrow
u_{0}$ reit. in $L^{p}(\Omega )$-weak and $v_{\varepsilon }\rightarrow v_{0}$
reit. in $L^{q}(\Omega )$-strong, where $u_{0}\in L^{p}(\Omega \times
Y\times Z)$ and $v_{0}\in L^{q}(\Omega \times Y\times Z)$. Then $%
u_{\varepsilon }v_{\varepsilon }\rightarrow u_{0}v_{0}$ reit. in $%
L^{r}(\Omega )$-weak.
\end{theorem}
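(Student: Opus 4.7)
The plan is to combine the multiscale compactness of Theorem \ref{t3.1} with a density/approximation argument in the spirit of the classical weak--strong product lemma. By H\"older's inequality and the uniform bounds coming from the hypotheses (the weak multiscale convergence gives $\|u_\varepsilon\|_{L^p(\Omega)}$ bounded, and the norm part of the strong multiscale convergence gives $\|v_\varepsilon\|_{L^q(\Omega)} \to \|v_0\|_{L^q(\Omega \times Y \times Z)}$), the sequence $(u_\varepsilon v_\varepsilon)$ is bounded in $L^r(\Omega)$. Since $r>1$, Theorem \ref{t3.1} yields a subsequence (still indexed by $\varepsilon$) and some $w_0 \in L^r(\Omega \times Y \times Z)$ with $u_\varepsilon v_\varepsilon \to w_0$ weakly multiscale in $L^r(\Omega)$. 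It remains to identify $w_0 = u_0 v_0$; uniqueness of the limit then promotes the subsequential convergence to the full sequence.

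To identify the limit, fix a test function $\psi \in \mathcal{C}_c(\Omega;\mathcal{C}^\infty_{\text{per}}(Y \times Z))$ (a dense subspace of $L^{r'}(\Omega;\mathcal{C}_{\text{per}}(Y \times Z))$), and for $\eta>0$ approximate $v_0$ by $\phi_\eta \in \mathcal{C}(\overline{\Omega};\mathcal{C}_{\text{per}}(Y \times Z))$ with $\|v_0 - \phi_\eta\|_{L^q(\Omega \times Y \times Z)} < \eta$. Writing $\phi_\eta^\varepsilon(x) = \phi_\eta(x,x/\varepsilon,x/\varepsilon^2)$, I would split
\[
\int_\Omega u_\varepsilon v_\varepsilon \psi^\varepsilon\,dx \;=\; \int_\Omega u_\varepsilon (\phi_\eta \psi)^\varepsilon\,dx \;+\; \int_\Omega u_\varepsilon (v_\varepsilon - \phi_\eta^\varepsilon)\psi^\varepsilon\,dx.
\]
For the first piece, $\phi_\eta\psi \in \mathcal{C}_c(\Omega;\mathcal{C}_{\text{per}}(Y \times Z))$ sits in $L^{p'}(\Omega;\mathcal{C}_{\text{per}}(Y \times Z))$, so the weak multiscale convergence of $u_\varepsilon$ gives, in the limit $\varepsilon\to 0$, the quantity $\iiint_{\Omega \times Y \times Z} u_0 \phi_\eta \psi\,dx\,dy\,dz$, which converges to $\iiint u_0 v_0 \psi$ as $\eta\to 0$ by H\"older and $\phi_\eta \to v_0$ in $L^q(\Omega \times Y \times Z)$. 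For the second piece, generalized H\"older yields
\[
\left|\int_\Omega u_\varepsilon (v_\varepsilon - \phi_\eta^\varepsilon)\psi^\varepsilon\,dx\right| \;\leq\; \|u_\varepsilon\|_{L^p(\Omega)}\,\|v_\varepsilon - \phi_\eta^\varepsilon\|_{L^q(\Omega)}\,\|\psi^\varepsilon\|_{L^{r'}(\Omega)},
\]
in which the outer factors are uniformly bounded.

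The main obstacle is therefore to control the middle factor and establish
\[
\limsup_{\varepsilon \to 0}\,\|v_\varepsilon - \phi_\eta^\varepsilon\|_{L^q(\Omega)} \;=\; \|v_0 - \phi_\eta\|_{L^q(\Omega \times Y \times Z)} \;<\; \eta.
\]
In the Hilbertian case $q=2$ this is immediate by expanding $\|v_\varepsilon - \phi_\eta^\varepsilon\|_{L^2}^2$ and combining three ingredients: (i) the norm convergence $\|v_\varepsilon\|_{L^2(\Omega)} \to \|v_0\|_{L^2(\Omega \times Y \times Z)}$ from the definition of strong multiscale convergence, (ii) the weak multiscale convergence of $v_\varepsilon$ tested against the admissible $\phi_\eta$, and (iii) the elementary $\|\phi_\eta^\varepsilon\|_{L^2(\Omega)} \to \|\phi_\eta\|_{L^2(\Omega \times Y \times Z)}$ for such $\phi_\eta$. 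For general $q \in (1,\infty)$ the same three ingredients, combined with a Brezis--Lieb style decomposition of $|v_\varepsilon - \phi_\eta^\varepsilon|^q$, produce the identity above. Plugging this into the preceding estimate shows that the second piece contributes at most $C\eta$ to the limsup, whereupon sending $\eta \to 0$ identifies $w_0 = u_0 v_0$ and closes the argument.
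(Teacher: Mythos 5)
The overall architecture of your argument — uniform $L^r$ bound via H\"older, extraction of a weak multiscale limit, identification on a dense set of test functions by splitting $v_\varepsilon=\phi_\eta^\varepsilon+(v_\varepsilon-\phi_\eta^\varepsilon)$ — is exactly the standard route and matches what \cite{DPDE} (to which the paper defers) does. The case $q=2$ is handled correctly by the Hilbert-space expansion.

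The gap is in the step you flag as ``the main obstacle,'' and your proposed fix for general $q$ does not hold up. You claim that a ``Brezis--Lieb style decomposition of $|v_\varepsilon-\phi_\eta^\varepsilon|^q$'' lets you deduce $\limsup_\varepsilon\|v_\varepsilon-\phi_\eta^\varepsilon\|_{L^q(\Omega)}=\|v_0-\phi_\eta\|_{L^q(\Omega\times Y\times Z)}$ from the three ingredients (i)--(iii). But the Brezis--Lieb lemma requires almost-everywhere convergence of the sequence, and $v_\varepsilon$ is a rapidly oscillating sequence with, in general, no pointwise limit at all: $v_\varepsilon$ converges to $v_0(x,y,z)$ only in the multiscale sense, and the pointwise limit (if it exists) would be the cell-average $\int_{Y\times Z}v_0\,dydz$, not $v_0-\phi_\eta$ evaluated along the oscillating fibers. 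So neither Brezis--Lieb nor any direct expansion of $|v_\varepsilon-\phi_\eta^\varepsilon|^q$ is available once $q\neq 2$, and ingredients (i)--(iii) alone are simply not sufficient in a non-Hilbert $L^q$.

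The correct ingredient is the uniform convexity of $L^q$ for $1<q<\infty$, i.e.\ the Radon--Riesz (Kadec--Klee) property: weak convergence plus norm convergence implies strong convergence. The clean way to invoke it is to pass through the (multiscale) unfolding operator $T_\varepsilon:L^q(\Omega)\to L^q(\Omega\times Y\times Z)$, which is an isometry up to a negligible boundary layer. Weak multiscale convergence of $v_\varepsilon$ is equivalent to $T_\varepsilon v_\varepsilon\rightharpoonup v_0$ in $L^q(\Omega\times Y\times Z)$, and the extra norm condition then gives $T_\varepsilon v_\varepsilon\to v_0$ strongly by Radon--Riesz. Since $T_\varepsilon\phi_\eta^\varepsilon\to\phi_\eta$ strongly as well, one gets $\|v_\varepsilon-\phi_\eta^\varepsilon\|_{L^q}=\|T_\varepsilon v_\varepsilon-T_\varepsilon\phi_\eta^\varepsilon\|_{L^q}\to\|v_0-\phi_\eta\|_{L^q}$, which is what you need. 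Without this (or an equivalent uniform-convexity argument), the identification step is unproved for $q\neq 2$. A secondary, minor point: you invoke Theorem \ref{t3.1} ``since $r>1$,'' but the statement allows $r=1$ (conjugate $p,q$); the compactness extraction and the density of $\mathcal{C}_c$ in $L^{r'}$ both fail for $r=1$ and that endpoint would need a separate treatment.
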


As an immediate consequence of the preceding result, the following holds
true.

\begin{corollary}
\label{c3.1}Let $(u_{\varepsilon })_{\varepsilon \in E}\subset L^{p}(\Omega
) $ and $(v_{\varepsilon })_{\varepsilon \in E}\subset L^{p^{\prime
}}(\Omega )\cap L^{\infty }(\Omega )$ ($1<p<\infty $ and $p^{\prime
}=p/(p-1) $) be two sequences such that: \emph{(i)} $u_{\varepsilon
}\rightarrow u_{0}$ reit. in $L^{p}(\Omega )$-weak; \emph{(ii)} $%
v_{\varepsilon }\rightarrow v_{0}$ reit. in $L^{p^{\prime }}(\Omega )$%
-strong; \emph{(iii)} $(v_{\varepsilon })_{\varepsilon \in E}$ is bounded in 
$L^{\infty }(\Omega )$. Then $u_{\varepsilon }v_{\varepsilon }\rightarrow
u_{0}v_{0}$ reit. in $L^{p}(\Omega )$-weak.
\end{corollary}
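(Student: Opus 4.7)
The plan is to combine Theorem \ref{t3.3} (at its borderline exponent $r=1$) with the compactness statement Theorem \ref{t3.1} and a uniqueness-of-limit argument. First I would observe that $(u_{\varepsilon}v_{\varepsilon})_{\varepsilon\in E}$ is bounded in $L^{p}(\Omega)$: by hypothesis (iii), $M:=\sup_{\varepsilon}\|v_{\varepsilon}\|_{L^{\infty}(\Omega)}<\infty$, while the weak reit. convergence of $(u_{\varepsilon})$ in $L^{p}(\Omega)$ forces a uniform bound on $\|u_{\varepsilon}\|_{L^{p}(\Omega)}$, so that $\|u_{\varepsilon}v_{\varepsilon}\|_{L^{p}(\Omega)}\le M\|u_{\varepsilon}\|_{L^{p}(\Omega)}\le C$.

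Next I would invoke Theorem \ref{t3.1} to extract, from any subsequence of $E$, a further subsequence $E'$ along which $u_{\varepsilon}v_{\varepsilon}\to w$ reit. in $L^{p}(\Omega)$-weak for some $w\in L^{p}(\Omega\times Y\times Z)$. Independently, I would apply Theorem \ref{t3.3} with the choice $q=p'$ and $r=1$ (this case is admitted by the statement since $r\ge 1$): hypotheses (i) and (ii) are exactly the premises required, and therefore $u_{\varepsilon}v_{\varepsilon}\to u_{0}v_{0}$ reit. in $L^{1}(\Omega)$-weak as $E\ni\varepsilon\to 0$, hence also along $E'$.

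To identify $w=u_{0}v_{0}$, I would test both modes of convergence against $\psi\in\mathcal{D}(\Omega)\otimes\mathcal{C}_{\text{per}}^{\infty}(Y\times Z)$. Such a $\psi$ belongs simultaneously to $L^{p'}(\Omega;\mathcal{C}_{\text{per}}(Y\times Z))$ (a legal class of test functions for the $L^{p}$-weak mode) and to $L^{\infty}(\Omega;\mathcal{C}_{\text{per}}(Y\times Z))$ (legal for the $L^{1}$-weak mode), so both passages to the limit yield
\[
\iiint_{\Omega\times Y\times Z} w\,\psi\,dxdydz \;=\;\lim_{E'\ni\varepsilon\to 0}\int_{\Omega} u_{\varepsilon}v_{\varepsilon}\,\psi\!\left(x,\tfrac{x}{\varepsilon},\tfrac{x}{\varepsilon^{2}}\right)dx \;=\;\iiint_{\Omega\times Y\times Z} u_{0}v_{0}\,\psi\,dxdydz.
\]
Density of $\mathcal{D}(\Omega)\otimes\mathcal{C}_{\text{per}}^{\infty}(Y\times Z)$ in $L^{p'}(\Omega\times Y\times Z)$ then forces $w=u_{0}v_{0}$ a.e., and by the usual subsequence principle the whole sequence converges reit. in $L^{p}(\Omega)$-weak to $u_{0}v_{0}$.

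The only genuine obstacle to a one-line deduction from Theorem \ref{t3.3} is that one cannot formally take $q=\infty$ there, so the theorem alone delivers convergence in $L^{1}$-weak rather than in the desired $L^{p}$-weak mode; the boundedness plus compactness argument above is precisely what upgrades the mode, which is consistent with the author's billing of the result as an \emph{immediate consequence} of Theorem \ref{t3.3}.
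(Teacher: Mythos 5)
Your proof is correct and faithfully supplies the details the paper elides by calling the corollary ``an immediate consequence'' of Theorem \ref{t3.3}: as you rightly observe, the direct application with $q=p'$ only gives $r=1$, hence $L^1(\Omega)$-weak reiterated convergence, and the extra step --- boundedness of $(u_\varepsilon v_\varepsilon)_\varepsilon$ in $L^p(\Omega)$ via the $L^\infty$-bound, extraction by Theorem \ref{t3.1}, and identification of the limit through the common test class $\mathcal{D}(\Omega)\otimes\mathcal{C}_{\mathrm{per}}^{\infty}(Y\times Z)$ together with the subsequence principle --- is precisely what upgrades the mode to $L^p(\Omega)$-weak. The paper offers no written proof, so there is no divergence to report; your argument is the natural one implicit in the author's claim.
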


The following result establishes a relationship between the limit of a
multiscale convergent sequence and the limit of its translates.

\begin{theorem}
\label{t3.4}Let $(u_{\varepsilon })_{\varepsilon \in E}\subset L^{p}(\Omega
) $ be a sequence such that $u_{\varepsilon }\rightarrow u_{0}$ reit. in $%
L^{p}(\Omega )$-weak, where $u_{0}\in L^{p}(\Omega \times Y\times Z)$. Let $%
\xi \in \mathbb{R}^{N}$ and define $v_{\varepsilon }(x)=u_{\varepsilon
}(x+\xi )$ for $x\in \Omega -\xi $. Then there exist a subsequence $%
E^{\prime }$ of $E$ and a couple $(r,s)\in Y\times Z$ such that $%
v_{\varepsilon }\rightarrow v_{0}$ reit. in $L^{p}(\Omega )$-weak when $%
E^{\prime }\ni \varepsilon \rightarrow 0$, where $v_{0}\in L^{p}((\Omega
-\xi )\times Y\times Z)$ is defined by $v_{0}(x,y,z)=u_{0}(x+\xi ,y+r,z+s)$
for $(x,y,z)\in (\Omega -\xi )\times Y\times Z$.
\end{theorem}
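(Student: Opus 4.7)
The plan is to exploit the compactness of $\overline{Y}$ and $\overline{Z}$ together with the periodicity of the test functions in the fast variables. The shift $x \mapsto x+\xi$ does not fit the $\varepsilon$-scaling exactly, so after a change of variables one must absorb the residual shifts $\xi/\varepsilon \pmod{1}$ and $\xi/\varepsilon^2 \pmod{1}$ as translations $r$, $s$ in the unit cells.

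First, for each $\varepsilon \in E$ write $\xi/\varepsilon = k_\varepsilon + r_\varepsilon$ and $\xi/\varepsilon^2 = \ell_\varepsilon + s_\varepsilon$ with $k_\varepsilon, \ell_\varepsilon \in \mathbb{Z}^N$ and $r_\varepsilon \in Y$, $s_\varepsilon \in Z$. Since $\overline{Y}$ and $\overline{Z}$ are compact, I extract a subsequence $E' \subset E$ such that $r_\varepsilon \to r \in \overline{Y}$ and $s_\varepsilon \to s \in \overline{Z}$, and define $v_0(x,y,z) = u_0(x+\xi, y+r, z+s)$ using the $Y$- and $Z$-periodic extensions of $u_0$ in $(y,z)$.

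Second, fix $\psi \in L^{p'}(\Omega-\xi; \mathcal{C}_{\text{per}}(Y\times Z))$. The substitution $y = x+\xi$ gives
\begin{equation*}
I_\varepsilon := \int_{\Omega-\xi} v_\varepsilon(x)\,\psi\bigl(x,\tfrac{x}{\varepsilon},\tfrac{x}{\varepsilon^2}\bigr)dx
= \int_\Omega u_\varepsilon(y)\,\psi\bigl(y-\xi,\tfrac{y}{\varepsilon}-\tfrac{\xi}{\varepsilon},\tfrac{y}{\varepsilon^2}-\tfrac{\xi}{\varepsilon^2}\bigr)dy,
\end{equation*}
and $Y$- and $Z$-periodicity of $\psi$ in the last two slots yields $I_\varepsilon = \int_\Omega u_\varepsilon\,\phi_\varepsilon(\cdot,\cdot/\varepsilon,\cdot/\varepsilon^2)\,dy$ with $\phi_\varepsilon(y,\eta,\zeta) := \psi(y-\xi,\eta-r_\varepsilon,\zeta-s_\varepsilon)$. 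Let also $\phi(y,\eta,\zeta) := \psi(y-\xi,\eta-r,\zeta-s) \in L^{p'}(\Omega;\mathcal{C}_{\text{per}}(Y\times Z))$. Split $I_\varepsilon = A_\varepsilon + B_\varepsilon$ according to whether we test against $\phi$ or against $\phi_\varepsilon - \phi$. By the hypothesis $u_\varepsilon \to u_0$ reit. in $L^p(\Omega)$-weak,
\begin{equation*}
A_\varepsilon \to \iiint_{\Omega\times Y\times Z} u_0(y,\eta,\zeta)\,\psi(y-\xi,\eta-r,\zeta-s)\,dy\,d\eta\,d\zeta,
\end{equation*}
and the change of variables $(x,\eta',\zeta') = (y-\xi,\eta-r,\zeta-s)$ combined with the periodicity of $u_0$ rewrites this limit as $\iiint_{(\Omega-\xi)\times Y\times Z} v_0\,\psi$, which is the desired right-hand side.

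Third, I must show $B_\varepsilon \to 0$. The sequence $(u_\varepsilon)$ is bounded in $L^p(\Omega)$ (testing with functions independent of $(\eta,\zeta)$ reduces reit. weak convergence to ordinary weak convergence in $L^p$, so Banach--Steinhaus applies). Hence it suffices to prove $\|(\phi_\varepsilon - \phi)(\cdot,\cdot/\varepsilon,\cdot/\varepsilon^2)\|_{L^{p'}(\Omega)} \to 0$. For $\psi$ of the simple product form $\varphi(y)\theta(\eta,\zeta)$ with $\varphi \in \mathcal{C}_c(\Omega-\xi)$ and $\theta \in \mathcal{C}_{\text{per}}^\infty(Y\times Z)$, the uniform continuity of $\theta$ gives $\|\phi_\varepsilon-\phi\|_\infty \to 0$, hence the $L^{p'}$-norm tends to $0$. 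A density argument in $L^{p'}(\Omega-\xi;\mathcal{C}_{\text{per}}(Y\times Z))$ (using finite sums of such products) combined with the uniform bound on $u_\varepsilon$ extends the conclusion to arbitrary $\psi$.

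The main obstacle is precisely this last step: the shift-in-the-periodic-variable operator is not continuous on $L^{p'}(\Omega-\xi;L^\infty_{\text{per}}(Y\times Z))$ in general, so one cannot simply invoke continuity of translation; the continuity of $\psi$ in $(\eta,\zeta)$ for a.e.\ $y$ is essential, and one must route through a density argument with genuinely continuous (in $(\eta,\zeta)$) approximants while keeping the $L^{p'}$-norm in $y$ under control.
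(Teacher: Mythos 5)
Your proposal is correct and follows essentially the same route as the paper's proof: decompose $\xi/\varepsilon$ and $\xi/\varepsilon^{2}$ into integer plus fractional parts, use compactness of $\overline{Y}\times\overline{Z}$ to extract a subsequence with $(r_{\varepsilon},s_{\varepsilon})\rightarrow(r,s)$, change variables and exploit periodicity to reduce to testing $u_{\varepsilon}$ against the shifted test function, then split into a part that converges by hypothesis and an error term killed by uniform continuity of $\psi$ in the fast variables together with boundedness of $(u_{\varepsilon})$. The only cosmetic difference is that the paper works directly with product test functions $\varphi\otimes\psi$ and invokes boundedness of $(v_{\varepsilon})$ at the very end, which is exactly the density step you make explicit; also note that the dominated-convergence argument you want already works directly on $L^{p'}(\Omega-\xi;\mathcal{C}_{\mathrm{per}}(Y\times Z))$ since these test functions are continuous in the periodic slots by definition, so your closing caveat is safely resolved by the very structure of the test space.
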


\begin{proof}
Let $\mathcal{R}\left( \xi /\varepsilon ^{i}\right) =\xi /\varepsilon ^{i}-%
\left[ \xi /\varepsilon ^{i}\right] $ ($i=1,2$) where $\left[ \xi
/\varepsilon ^{i}\right] $ denotes the integer part of $\frac{\xi }{%
\varepsilon ^{i}}$. Then $\left( \mathcal{R}\left( \xi /\varepsilon
^{i}\right) \right) _{\varepsilon \in E}$ is a bounded sequence in $%
[0,1)^{N} $, hence there exist a subsequence $E^{\prime }$ of $E$ and $%
(r,s)\in Y\times Z$ ($Y=Z=[0,1)^{N}$) such that the following convergence
results hold in the usual topology of $\mathbb{R}$: 
\begin{equation}
\mathcal{R}\left( \frac{\xi }{\varepsilon }\right) \rightarrow r\text{ and }%
\mathcal{R}\left( \frac{\xi }{\varepsilon ^{2}}\right) \rightarrow s\text{
as }E^{\prime }\ni \varepsilon \rightarrow 0.  \label{3.1}
\end{equation}%
This being so, let $\varphi \in \mathcal{C}_{0}^{\infty }(\Omega -\xi )$ and 
$\psi \in \mathcal{C}_{\text{per}}(Y\times Z)$; then 
\begin{eqnarray*}
&&\int_{\Omega -\xi }u_{\varepsilon }(x+\xi )\varphi (x)\psi \left( \frac{x}{%
\varepsilon },\frac{x}{\varepsilon ^{2}}\right) dx \\
&=&\int_{\Omega }u_{\varepsilon }(x)\varphi (x-\xi )\psi \left( \frac{x-\xi 
}{\varepsilon },\frac{x-\xi }{\varepsilon ^{2}}\right) dx \\
&=&\int_{\Omega }u_{\varepsilon }(x)\varphi (x-\xi )\left[ \psi \left( \frac{%
x}{\varepsilon }-\frac{\xi }{\varepsilon },\frac{x}{\varepsilon ^{2}}-\frac{%
\xi }{\varepsilon ^{2}}\right) -\psi \left( \frac{x}{\varepsilon }-r,\frac{x%
}{\varepsilon ^{2}}-s\right) \right] dx \\
&&+\int_{\Omega }u_{\varepsilon }(x)\varphi (x-\xi )\psi \left( \frac{x}{%
\varepsilon }-r,\frac{x}{\varepsilon ^{2}}-s\right) dx \\
&=&(I)+(II).
\end{eqnarray*}%
Set $\phi (y,z)=\psi (y-r,z-s)$, $(y,z)\in Y\times Z$. Then $\phi \in 
\mathcal{C}_{\text{per}}(Y\times Z)$, and thus, as $E^{\prime }\ni
\varepsilon \rightarrow 0$, 
\begin{eqnarray*}
(II) &\rightarrow &\iiint_{\Omega \times Y\times Z}u_{0}(x,y,z)\varphi
(x-\xi )\psi (y-r,z-s)dxdydz \\
&=&\iiint_{(\Omega -\xi )\times Y\times Z}u_{0}(x+\xi ,y+r,z+s)\varphi
(x)\psi (y,z)dxdydz.
\end{eqnarray*}%
On the other hand, 
\begin{eqnarray*}
\left\vert (I)\right\vert &\leq &C\left\Vert \psi \left( \cdot -\frac{\xi }{%
\varepsilon },\cdot -\frac{\xi }{\varepsilon ^{2}}\right) -\psi \left( \cdot
-r,\cdot -s\right) \right\Vert _{\infty } \\
&=&C\left\Vert \psi \left( \cdot -\mathcal{R}\left( \frac{\xi }{\varepsilon }%
\right) ,\cdot -\mathcal{R}\left( \frac{\xi }{\varepsilon ^{2}}\right)
\right) -\psi \left( \cdot -r,\cdot -s\right) \right\Vert _{\infty }
\end{eqnarray*}%
since $\psi $ is $Y\times Z$-periodic, where $\left\Vert \cdot \right\Vert
_{\infty }$ stands for the supremum norm. The uniform continuity of $\psi $
and the convergence results (\ref{3.1}) yield $(I)\rightarrow 0$ as $%
E^{\prime }\ni \varepsilon \rightarrow 0$. The result follows from the
boundedness of $(v_{\varepsilon })_{\varepsilon \in E}$ in $L^{p}(\Omega
-\xi )$.
\end{proof}

The next result deals with the convergence of convolution of sequences.
Before we can proceed further, let $p,q,m\geq 1$ be real numbers satisfying $%
\frac{1}{p}+\frac{1}{q}=1+\frac{1}{m}$. Let $(u_{\varepsilon })_{\varepsilon
\in E}\subset L^{p}(\Omega )$ and $(v_{\varepsilon })_{\varepsilon \in
E}\subset L^{q}(\mathbb{R}^{N})$ be two sequences. Viewing $u_{\varepsilon }$
as defined on the whole $\mathbb{R}^{N}$ (by taking its zero-extension off $%
\Omega $) we define $u_{\varepsilon }\ast v_{\varepsilon }$ on $\mathbb{R}%
^{N}$ by 
\begin{equation*}
(u_{\varepsilon }\ast v_{\varepsilon })(x)=\int_{\mathbb{R}%
^{N}}u_{\varepsilon }(\xi )v_{\varepsilon }(x-\xi )d\xi \ \ \ \ (x\in 
\mathbb{R}^{N}).
\end{equation*}%
Then $u_{\varepsilon }\ast v_{\varepsilon }\in L^{m}(\mathbb{R}^{N})$ and
further 
\begin{equation*}
\left\Vert u_{\varepsilon }\ast v_{\varepsilon }\right\Vert _{L^{m}(\mathbb{R%
}^{N})}\leq \left\Vert u_{\varepsilon }\right\Vert _{L^{p}(\Omega
)}\left\Vert v_{\varepsilon }\right\Vert _{L^{q}(\mathbb{R}^{N})}.
\end{equation*}%
As in \cite{Visintin} where the double convolution is defined, we define the 
\emph{triple convolution} denoted by $\ast \ast $ as follows. For $u\in
L^{p}(\mathbb{R}^{N};L_{\text{per}}^{p}(Y\times Z))\equiv L^{p}(\mathbb{R}%
^{N}\times Y\times Z)$ and $v\in L^{q}(\mathbb{R}^{N}\times Y\times Z)$, $%
u\ast \ast v$ stands for the function 
\begin{equation*}
(u\ast \ast v)(x,y,z)=\iiint_{\mathbb{R}^{N}\times Y\times Z}u(\xi
,r,s)v(x-\xi ,y-r,z-s)d\xi drds,\ \ (x,y,z)\in \mathbb{R}^{N}\times Y\times
Z.
\end{equation*}%
Then $u\ast \ast v$ is well-defined and belongs to $L^{m}(\mathbb{R}%
^{N}\times Y\times Z)$, and further satisfies 
\begin{equation*}
\left\Vert u\ast \ast v\right\Vert _{L^{m}(\mathbb{R}^{N}\times Y\times
Z)}\leq \left\Vert u\right\Vert _{L^{p}(\mathbb{R}^{N}\times Y\times
Z)}\left\Vert v\right\Vert _{L^{q}(\mathbb{R}^{N}\times Y\times Z)}.
\end{equation*}%
Now, if $u\in L^{p}(\Omega \times Y\times Z)$ and $v\in L^{q}(\Omega \times
Y\times Z)$, we define $u\ast \ast v$ just by viewing $\ u$ and $v$ as
defined in the whole $\mathbb{R}^{N}\times Y\times Z$; just take the
zero-extension off $\Omega $.

Bearing this in mind, the next result is in order.

\begin{theorem}
\label{t3.5}Let $(u_{\varepsilon })_{\varepsilon \in E}$ and $%
(v_{\varepsilon })_{\varepsilon \in E}$ be as above. Suppose that, as $E\ni
\varepsilon \rightarrow 0$, $u_{\varepsilon }\rightarrow u_{0}$ reit. in $%
L^{p}(\Omega )$-weak and $v_{\varepsilon }\rightarrow v_{0}$ reit. in $L^{q}(%
\mathbb{R}^{N})$-strong, where $u_{0}\in L^{p}(\Omega \times Y\times Z)$ and 
$v_{0}\in L^{q}(\Omega \times Y\times Z)$. Then, as $E\ni \varepsilon
\rightarrow 0$, 
\begin{equation*}
u_{\varepsilon }\ast v_{\varepsilon }\rightarrow u_{0}\ast \ast v_{0}\text{
reit. in }L^{p}(\Omega )\text{-weak.}
\end{equation*}
\end{theorem}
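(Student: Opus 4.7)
The plan is to use Fubini to convert the pairing $\int(u_\varepsilon\ast v_\varepsilon)\Psi_\varepsilon\,dx$ into a single integral $\int u_\varepsilon(\xi)G_\varepsilon(\xi)\,d\xi$, to identify the strong multiscale limit of $G_\varepsilon$, and then to close via the weak--strong multiscale pairing of Corollary~\ref{c3.1}.

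By Young's inequality, $(u_\varepsilon\ast v_\varepsilon)$ is bounded in $L^m(\mathbb{R}^N)$; Theorem~\ref{t3.1} gives a weakly multiscale convergent subsequence, and uniqueness of the limit will promote the convergence to the full sequence. By density of smooth tensor products in the relevant $L^p$, $L^q$ and test-function spaces, together with the continuity bound $\|u\ast v\|_{L^m}\leq\|u\|_{L^p}\|v\|_{L^q}$, it suffices to assume that both the limit $v_0$ and the test function $\Psi$ are smooth, compactly supported in the slow variable and $Y\times Z$-periodic in the fast ones. Testing $u_\varepsilon\ast v_\varepsilon$ against $\Psi_\varepsilon(x)=\Psi(x,x/\varepsilon,x/\varepsilon^2)$, Fubini together with the translation $y=x-\xi$ yields
\[
\int (u_\varepsilon\ast v_\varepsilon)\Psi_\varepsilon\,dx=\int u_\varepsilon(\xi)G_\varepsilon(\xi)\,d\xi,\qquad G_\varepsilon(\xi):=\int v_\varepsilon(y)\Psi\!\left(y+\xi,\tfrac{y+\xi}{\varepsilon},\tfrac{y+\xi}{\varepsilon^2}\right)dy.
\]

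The central computation is the identification of a strong multiscale limit for $G_\varepsilon$. Writing $(y+\xi)/\varepsilon^i=y/\varepsilon^i+\mathcal{R}(\xi/\varepsilon^i)$ modulo $\mathbb{Z}^N$ and using the $Y\times Z$-periodicity of $\Psi$, the integrand is the product of $v_\varepsilon$ with an oscillating test function shifted by the $\mathcal{R}$-phases. Using $v_\varepsilon\to v_0$ reit.\ in $L^q$-strong, I substitute $v_\varepsilon\approx v_0(y,y/\varepsilon,y/\varepsilon^2)$ (justified by the smoothness of the reduced $v_0$) and apply standard periodic averaging for smooth integrands. A shift of the periodic integration variables absorbs the irregular $\mathcal{R}(\xi/\varepsilon^i)$-phases via the periodicity in $(r,s)$ of
\[
G_0(\xi,r,s):=\iiint v_0(y,y',z')\Psi(y+\xi,y'+r,z'+s)\,dy\,dy'\,dz',
\]
yielding $G_\varepsilon(\xi)-G_0(\xi,\xi/\varepsilon,\xi/\varepsilon^2)\to 0$ in $L^{p'}(\mathbb{R}^N)$.

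Since $G_0$ is smooth in $\xi$ and $Y\times Z$-periodic in $(r,s)$, it is an admissible multiscale test. Corollary~\ref{c3.1} applied to $u_\varepsilon$ (weak) and $G_0(\cdot,\cdot/\varepsilon,\cdot/\varepsilon^2)$ (strong) gives
\[
\int u_\varepsilon G_\varepsilon\,d\xi\to\iiint u_0(\xi,r,s)G_0(\xi,r,s)\,d\xi\,dr\,ds,
\]
and the change of variable $(x,y,z)=(y'+\xi,y''+r,z''+s)$ rewrites the right-hand side as $\iiint\Psi(x,y,z)(u_0\ast\ast v_0)(x,y,z)\,dx\,dy\,dz$, identifying the limit. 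The main obstacle is the uniform-in-$\xi$ strong convergence $G_\varepsilon-G_0(\cdot,\cdot/\varepsilon,\cdot/\varepsilon^2)\to 0$ in $L^{p'}$: Theorem~\ref{t3.4} alone supplies only subsequential pointwise-$\xi$ convergence of the shifted $v_\varepsilon$, and upgrading to an $L^{p'}$ statement requires both the smoothness of the (density-reduced) data and the strong, rather than merely weak, nature of the multiscale convergence of $v_\varepsilon$.
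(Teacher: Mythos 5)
Your overall architecture mirrors the paper's: write the pairing as $\int u_\varepsilon G_\varepsilon\,d\xi$ via Fubini and translation, reduce $v_0$ to a smooth admissible function by density and strong multiscale convergence, and pass to the limit in the remaining term. The change of variables at the end correctly identifies the limit as $\iiint (u_0\ast\ast v_0)\Psi$. The gap you yourself flag, however, is real and is precisely the heart of the proof, and the diagnosis you give of it is slightly off.

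The missing step is the convergence $G_\varepsilon(\xi)-G_0(\xi,\xi/\varepsilon,\xi/\varepsilon^2)\to 0$ in a way that pairs against $u_\varepsilon$. You attribute the difficulty to Theorem~\ref{t3.4} producing only ``subsequential'' convergence of the shifted data. That is not where the obstruction actually lies: for a \emph{fixed} $\xi$, Theorem~\ref{t3.4} does pass to a subsequence along which $\mathcal{R}(\xi/\varepsilon)\to r$ and $\mathcal{R}(\xi/\varepsilon^2)\to s$, but the resulting limit of the \emph{difference} $V_\varepsilon(\xi):=G_\varepsilon(\xi)-G_0(\xi,\mathcal{R}(\xi/\varepsilon),\mathcal{R}(\xi/\varepsilon^2))$ is $0$ irrespective of which cluster point $(r,s)$ is picked, because both $\int \psi_0^\varepsilon(\cdot)f^\varepsilon(\cdot+\xi)$ and $\Phi^\varepsilon(\xi)=\Phi(\xi,\mathcal{R}(\xi/\varepsilon),\mathcal{R}(\xi/\varepsilon^2))$ tend, along the same subsequence, to the same number $\Phi(\xi,r,s)$. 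By the standard ``every subsequence has a sub-subsequence with the same limit'' argument, the \emph{full} sequence $V_\varepsilon(\xi)\to 0$ for each $\xi$. Subsequence-dependence is therefore not the issue, and neither ``smoothness'' nor ``strong convergence'' is the missing mechanism at this stage.

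What is actually missing is the passage from pointwise $V_\varepsilon(\xi)\to 0$ to the integral estimate $\int u_\varepsilon V_\varepsilon\to 0$ (equivalently $\|V_\varepsilon\|_{L^{p'}}\to 0$). This is a genuine step: one must observe that $V_\varepsilon$ is uniformly bounded and has support contained in a fixed compact set (because $\psi_0$ and the test function lie in $\mathcal{K}(\mathbb{R}^N;\mathcal{C}_{\text{per}}(Y\times Z))$), and then invoke Egorov's theorem (as the paper does) or dominated convergence to upgrade pointwise to $L^{p'}$. Without this, asserting ``$\to 0$ in $L^{p'}$'' after ``apply standard periodic averaging'' and ``a shift of the periodic integration variables absorbs the $\mathcal{R}$-phases'' is not a proof: the shift produces the $\varepsilon$-dependent phase $(\mathcal{R}(\xi/\varepsilon),\mathcal{R}(\xi/\varepsilon^2))$ in both terms, but the required uniformity-in-$\xi$ is exactly what the Egorov/DCT step supplies. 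In short, you have the right skeleton and the right reduction to $G_\varepsilon$ vs. $G_0$, but the step you label as the ``main obstacle'' is exactly where the paper's Egorov argument is indispensable, and your proposal neither provides it nor names it.
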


\begin{proof}
The proof is very similar to the one of its homologue Theorem 2.6 in \cite%
{M2AS} (see also \cite{NA2014}). Since Theorem 2.6 in \cite{M2AS} involves
almost periodicity and moreover is checked in the two-scale sense, it is
suitable to repeat the proof here in the periodicity and multiscale
frameworks for completeness. First and foremost, it is easy to see that the
sequence $(u_{\varepsilon }\ast v_{\varepsilon })_{\varepsilon \in E}$ is
bounded in $L^{m}(\Omega )$. Now, let $\eta >0$ and let $\psi _{0}\in 
\mathcal{K}(\mathbb{R}^{N};\mathcal{C}_{\text{per}}(Y\times Z))$ (the space
of continuous functions from $\mathbb{R}^{N}$ into $\mathcal{C}_{\text{per}%
}(Y\times Z)$ with compact support in $\mathbb{R}^{N}$) be such that $%
\left\Vert v_{0}-\psi _{0}\right\Vert _{L^{q}(\mathbb{R}^{N}\times Y\times
Z)}\leq \frac{\eta }{2}$. Since $v_{\varepsilon }\rightarrow v_{0}$ reit. in 
$L^{q}(\mathbb{R}^{N})$-strong, we have that $v_{\varepsilon }-\psi
_{0}^{\varepsilon }\rightarrow v_{0}-\psi _{0}$ reit. in $L^{q}(\mathbb{R}%
^{N})$-strong, hence $\left\Vert v_{\varepsilon }-\psi _{0}^{\varepsilon
}\right\Vert _{L^{q}(\mathbb{R}^{N})}\rightarrow \left\Vert v_{0}-\psi
_{0}\right\Vert _{L^{q}(\mathbb{R}^{N}\times Y\times Z)}$ as $E\ni
\varepsilon \rightarrow 0$. So, there is $\alpha >0$ such that 
\begin{equation}
\left\Vert v_{\varepsilon }-\psi _{0}^{\varepsilon }\right\Vert _{L^{q}(%
\mathbb{R}^{N})}\leq \eta \text{ for }E\ni \varepsilon \leq \alpha \text{.}
\label{3.7}
\end{equation}%
If we still denote by $u_{\varepsilon }$ the extension by zero of $%
u_{\varepsilon }$ outside $\Omega $, then we have, for any $f\in \mathcal{K}%
(\Omega ;\mathcal{C}_{\text{per}}(Y\times Z))$,%
\begin{eqnarray*}
\int_{\Omega }(u_{\varepsilon }\ast v_{\varepsilon })(x)f\left( x,\frac{x}{%
\varepsilon },\frac{x}{\varepsilon ^{2}}\right) dx &=&\int_{\Omega }\left(
\int_{\mathbb{R}^{N}}u_{\varepsilon }(t)v_{\varepsilon }(x-t)dt\right)
f\left( x,\frac{x}{\varepsilon },\frac{x}{\varepsilon ^{2}}\right) dx \\
&=&\int_{\mathbb{R}^{N}}u_{\varepsilon }(t)\left[ \int_{\mathbb{R}%
^{N}}v_{\varepsilon }(x-t)f\left( x,\frac{x}{\varepsilon },\frac{x}{%
\varepsilon ^{2}}\right) dx\right] dt \\
&=&\int_{\mathbb{R}^{N}}u_{\varepsilon }(t)\left[ \int_{\mathbb{R}%
^{N}}v_{\varepsilon }(x)f\left( x+t,\frac{x}{\varepsilon }+\frac{t}{%
\varepsilon },\frac{x}{\varepsilon ^{2}}+\frac{t}{\varepsilon ^{2}}\right) dx%
\right] dt \\
&=&\int_{\mathbb{R}^{N}}u_{\varepsilon }(t)\left[ \int_{\mathbb{R}%
^{N}}(v_{\varepsilon }(x)-\psi _{0}^{\varepsilon }(x))f^{\varepsilon }(x+t)dx%
\right] dt \\
&&+\int_{\mathbb{R}^{N}}u_{\varepsilon }(t)\left( \int_{\mathbb{R}^{N}}\psi
_{0}^{\varepsilon }(x)f^{\varepsilon }(x+t)dx\right) dt \\
&=&(I)+(II).
\end{eqnarray*}%
Firstly, $(I)=\int_{\Omega }[u_{\varepsilon }\ast (v_{\varepsilon }-\psi
_{0}^{\varepsilon })](x)f^{\varepsilon }(x)dx$ and 
\begin{eqnarray*}
\left\vert (I)\right\vert &\leq &\left\Vert u_{\varepsilon }\right\Vert
_{L^{p}(\Omega )}\left\Vert v_{\varepsilon }-\psi _{0}^{\varepsilon
}\right\Vert _{L^{q}(\mathbb{R}^{N})}\left\Vert f^{\varepsilon }\right\Vert
_{L^{m^{\prime }}(\Omega )} \\
&\leq &C\left\Vert v_{\varepsilon }-\psi _{0}^{\varepsilon }\right\Vert
_{L^{q}(\mathbb{R}^{N})},
\end{eqnarray*}%
$C>0$ being independent of $\varepsilon $. It follows from (\ref{3.7}) that 
\begin{equation}
\left\vert (I)\right\vert \leq c\eta \text{ for }0<\varepsilon \leq \alpha 
\text{.}  \label{3.8}
\end{equation}%
Secondly, owing to Theorem \ref{t3.4}, we have, as $E\ni \varepsilon
\rightarrow 0$, 
\begin{equation*}
\int_{\mathbb{R}^{N}}\psi _{0}^{\varepsilon }(x)f^{\varepsilon
}(x+t)dx\rightarrow \iint_{\mathbb{R}^{N}\times Y\times Z}\psi
_{0}(x,y,z)f(x+t,y+r,z+s)dxdydz
\end{equation*}%
where $r=\lim \mathcal{R}\left( \frac{t}{\varepsilon }\right) $ and $s=\lim 
\mathcal{R}\left( \frac{t}{\varepsilon ^{2}}\right) $ (for a suitable
subsequence $E^{\prime }$ of $E$). So let $\Phi :\mathbb{R}^{N}\times
Y\times Z\rightarrow \mathbb{R}$ be defined by 
\begin{equation*}
\Phi (t,r,s)=\iint_{\mathbb{R}^{N}\times Y\times Z}\psi
_{0}(x,y,z)f(x+t,y+r,z+s)dxdydz,\ (t,r,s)\in \mathbb{R}^{N}\times Y\times Z.
\end{equation*}%
Then it is easy to see that $\Phi \in \mathcal{K}(\mathbb{R}^{N};\mathcal{C}%
_{\text{per}}(Y\times Z))$, so that the trace $\Phi ^{\varepsilon }(t)=\Phi
\left( t,\frac{t}{\varepsilon },\frac{t}{\varepsilon ^{2}}\right) $ ($t\in 
\mathbb{R}^{N}$) is well-defined by 
\begin{equation*}
\Phi ^{\varepsilon }(t)=\iint_{\mathbb{R}^{N}\times Y\times Z}\psi
_{0}(x,y,z)f\left( x+t,y+\frac{t}{\varepsilon },z+\frac{t}{\varepsilon ^{2}}%
\right) dxdydz.
\end{equation*}

Next, we have 
\begin{eqnarray*}
(II) &=&\int_{\mathbb{R}^{N}}u_{\varepsilon }(t)\left( \int_{\mathbb{R}%
^{N}}\psi _{0}^{\varepsilon }(x)f^{\varepsilon }(x+t)dx-\Phi ^{\varepsilon
}(t)\right) dt+\int_{\mathbb{R}^{N}}u_{\varepsilon }(t)\Phi ^{\varepsilon
}(t)dt \\
&=&(II_{1})+(II_{2}).
\end{eqnarray*}%
Dealing with $(II_{1})$, let 
\begin{equation*}
V_{\varepsilon }(t)=\int_{\mathbb{R}^{N}}\psi _{0}^{\varepsilon
}(x)f^{\varepsilon }(x+t)dx-\Phi ^{\varepsilon }(t)\text{ for a.e. }t\in 
\mathbb{R}^{N}.
\end{equation*}%
Then the following holds true:

\begin{itemize}
\item[(P1)] For a.e. $t$, $V_{\varepsilon }(t)\rightarrow 0$ as $E^{\prime
}\ni \varepsilon \rightarrow 0$ (possibly up to a subsequence)

\item[(P2)] $\int_{\mathbb{R}^{N}}u_{\varepsilon }(t)V_{\varepsilon
}(t)dt\rightarrow 0$ as $E^{\prime }\ni \varepsilon \rightarrow 0$.
\end{itemize}

Indeed, for (P1), appealing to Theorem \ref{t3.4}, 
\begin{equation*}
\int_{\mathbb{R}^{N}}\psi _{0}^{\varepsilon }(x)f^{\varepsilon
}(x+t)dx\rightarrow \iint_{\mathbb{R}^{N}\times Y\times Z}\psi
_{0}(x,y,z)f(x+t,y+r,z+s)dxdydz\text{ as }E^{\prime }\ni \varepsilon
\rightarrow 0
\end{equation*}%
where $r$ and $s$ are such that $\mathcal{R}(\frac{t}{\varepsilon }%
)\rightarrow r$ and $\mathcal{R}(\frac{t}{\varepsilon ^{2}})\rightarrow s$
for some subsequence of $E^{\prime }$ (not relabeled). Moreover, since $\Phi
^{\varepsilon }(t)=\Phi \left( t,\mathcal{R}(\frac{t}{\varepsilon }),%
\mathcal{R}(\frac{t}{\varepsilon ^{2}})\right) $, we have by the continuity
of $\Phi (t,\cdot ,\cdot )$ that, for the same subsequence, 
\begin{equation*}
\Phi ^{\varepsilon }(t)\rightarrow \iint_{\mathbb{R}^{N}\times Y\times
Z}\psi _{0}(x,y,z)f(x+t,y+r,z+s)dxdydz.
\end{equation*}%
Thus (P1) is justified. As for (P2), first and foremost we have 
\begin{equation*}
\left\vert V_{\varepsilon }(t)\right\vert \leq C\text{ for a.e. }t\in 
\mathbb{R}^{N},
\end{equation*}%
$C>0$ being independent of $t$ and $\varepsilon $. Since $f$ and $\psi _{0}$
belong to $\mathcal{K}(\mathbb{R}^{N};\mathcal{C}_{\text{per}}(Y\times Z))$
we have that $f^{\varepsilon }$ and $\psi _{0}^{\varepsilon }$ lie in $%
\mathcal{K}(\mathbb{R}^{N})$ and their supports are contained in a fixed
compact set of $\mathbb{R}^{N}$. Therefore $\psi _{0}^{\varepsilon }\ast
f^{\varepsilon }\in \mathcal{K}(\mathbb{R}^{N})$. As a result, $%
V_{\varepsilon }\in \mathcal{K}(\mathbb{R}^{N})$ and further its support is
contained in a fixed compact set $L\subset \mathbb{R}^{N}$ independent of $%
\varepsilon $.

With this in mind, let $\gamma >0$. We infer from Egorov's theorem that
there exists $D\subset \mathbb{R}^{N}$ such that meas$(\mathbb{R}%
^{N}\backslash D)<\gamma $ and $V_{\varepsilon }$ converges uniformly to $0$
on $D$. We have the following series of inequalities 
\begin{eqnarray*}
\left\vert \int_{\mathbb{R}^{N}}u_{\varepsilon }(t)V_{\varepsilon
}(t)dt\right\vert &\leq &\left\Vert u_{\varepsilon }\right\Vert
_{L^{p}(D)}\left\Vert V_{\varepsilon }\right\Vert _{L^{p^{\prime
}}(D)}+\left\Vert u_{\varepsilon }\right\Vert _{L^{p}(\mathbb{R}%
^{N}\backslash D)}\left\Vert V_{\varepsilon }\right\Vert _{L^{p^{\prime }}(%
\mathbb{R}^{N}\backslash D)} \\
&\leq &C\left\Vert V_{\varepsilon }\right\Vert _{L^{p^{\prime }}(D\cap L)}+C%
\text{meas}(\mathbb{R}^{N}\backslash D) \\
&\leq &C_{1}\text{meas}(L)\sup_{t\in D}\left\vert V_{\varepsilon
}(t)\right\vert +C_{1}\gamma
\end{eqnarray*}%
where $C_{1}>0$ is independent of both $\varepsilon $ and $D$. It emerges
from the uniform continuity of $V_{\varepsilon }$ in $D$ that there exists $%
\alpha _{1}>0$ with $\alpha _{1}\leq \alpha $ such that 
\begin{equation*}
\left\vert \int_{\mathbb{R}^{N}}u_{\varepsilon }(t)V_{\varepsilon
}(t)dt\right\vert \leq C_{2}\gamma \text{ provided }E^{\prime }\ni
\varepsilon \leq \alpha _{1},
\end{equation*}%
where $C_{2}>0$ is independent of $\varepsilon $. This shows (P2). We
concludes that $(II_{1})\rightarrow 0$ as $E^{\prime }\ni \varepsilon
\rightarrow 0$.

As for $(II_{2})$, 
\begin{equation*}
\int_{\Omega }u_{\varepsilon }(t)\Phi ^{\varepsilon }(t)dt\rightarrow
\iiint_{\Omega \times Y\times Z}u_{0}(t,r,s)\Phi (t,r,s)dtdrds,
\end{equation*}%
and 
\begin{eqnarray*}
&&\iiint_{\Omega \times Y\times Z}u_{0}(t,r,s)\Phi (t,r,s)dtdrds \\
&=&\iiint_{\Omega \times Y\times Z}u_{0}(t,r,s)\left[ \iiint_{\mathbb{R}%
^{N}\times Y\times Z}\psi _{0}(x,y,z)f(x+t,y+r,z+s)dxdydz\right] dtdrds \\
&=&\iiint_{\Omega \times Y\times Z}\left[ \iiint_{\Omega \times Y\times
Z}u_{0}(t,r,s)\psi _{0}(x-t,y-r,z-s)dtdrds\right] f(x,y,z)dxdydz \\
&=&\iiint_{\Omega \times Y\times Z}(u_{0}\ast \ast \psi
_{0})(x,y,z)f(x,y,z)dxdydz.
\end{eqnarray*}%
Thus, there is $0<\alpha _{2}\leq \alpha _{1}$ such that 
\begin{equation}
\left\vert \int_{\Omega }(u_{\varepsilon }\ast \psi _{0}^{\varepsilon
})f^{\varepsilon }dx-\iiint_{\Omega \times Y\times Z}(u_{0}\ast \ast \psi
_{0})fdxdydz\right\vert \leq \frac{\eta }{2}\text{ for }E^{\prime }\ni
\varepsilon \leq \alpha _{2}\text{.}  \label{3.9}
\end{equation}%
Now, let $0<\varepsilon \leq \alpha _{2}$ be fixed. The decomposition 
\begin{eqnarray*}
&&\int_{\Omega }(u_{\varepsilon }\ast v_{\varepsilon })f^{\varepsilon
}dx-\iiint_{\Omega \times Y\times Z}(u_{0}\ast \ast v_{0})fdxdydz \\
&=&\int_{\Omega }\left[ u_{\varepsilon }\ast (v_{\varepsilon }-\psi
_{0}^{\varepsilon })\right] f^{\varepsilon }dx+\iiint_{\Omega \times Y\times
Z}\left[ u_{0}\ast \ast (\psi _{0}-v_{0})\right] fdxdydz \\
&&+\int_{\Omega }(u_{\varepsilon }\ast \psi _{0}^{\varepsilon
})f^{\varepsilon }dx-\iiint_{\Omega \times Y\times Z}(u_{0}\ast \ast \psi
_{0})fdxdydz,
\end{eqnarray*}%
associated to (\ref{3.7})-(\ref{3.9}) lead us to 
\begin{equation*}
\left\vert \int_{\Omega }(u_{\varepsilon }\ast v_{\varepsilon
})f^{\varepsilon }dx-\iiint_{\Omega \times Y\times Z}(u_{0}\ast \ast
v_{0})fdxdydz\right\vert \leq C\eta \text{ for }E^{\prime }\ni \varepsilon
\leq \alpha _{2}.
\end{equation*}%
Here $C$ is a positive constant independent of $\varepsilon $. This
concludes the proof.
\end{proof}

In the present work, we will deal with the following time-dependent version
of multiscale convergence It has been for the first time considered by
Holmbom \cite{Holmbom1} (see also \cite{Holmbom2, CMA, CPAA}). It reads as:
A sequence $(u_{\varepsilon })_{\varepsilon >0}\subset L^{p}(Q)$ ($Q=\Omega
\times (0,T)$, $1\leq p<\infty $) is said to weakly multiscale converge
towards $u_{0}\in L^{p}(Q\times Y\times Z\times \mathcal{T})$ if, as $%
\varepsilon \rightarrow 0$, 
\begin{equation*}
\int_{Q}u_{\varepsilon }(x,t)f\left( x,t,\frac{x}{\varepsilon },\frac{x}{%
\varepsilon ^{2}},\frac{t}{\varepsilon }\right) dxdt\rightarrow
\iiint_{Q\times Y\times Z\times \mathcal{T}}(u_{0}f)(x,t,y,z,\tau
)dxdtdydzd\tau
\end{equation*}%
for all $f\in L^{p^{\prime }}(Q;\mathcal{C}_{\text{per}}(Y\times Z\times 
\mathcal{T}))$, where $\mathcal{T}=[0,1)$.

The conclusions of Theorems \ref{t3.2}, \ref{t3.4} and \ref{t3.5} are still
valid mutatis mutandis in the context of time-dependent multiscale
convergence (change e.g. $\Omega $ into $Q$, $W^{1,p}(\Omega )$ into $%
L^{p}(0,T;W^{1,p}(\Omega ))$ etc.).

Let $a\in \mathbb{R}$ and let $(u_{\varepsilon })_{\varepsilon >0}\subset
L^{p}(Q)$ satisfy $u_{\varepsilon }\rightarrow u_{0}$ reit. in $L^{p}(Q)$%
-weak, where $u_{0}\in L^{p}(Q\times Y\times Z\times \mathcal{T})$. Define $%
v_{\varepsilon }(x,t)=u_{\varepsilon }(x,t+a)$ for $(x,t)\in \Omega \times
(-a,T-a)$. Then $v_{\varepsilon }\rightarrow v_{0}$ reit. in $L^{p}(\Omega
\times (-a,T-a))$-weak, where 
\begin{equation*}
v_{0}(x,t,y,z,\tau )=u_{0}(x,t+a,y,z,\tau +r),\ \ (x,t,y,z,\tau )\in \Omega
\times (-a,T-a)\times Y\times Z\times \mathcal{T},
\end{equation*}%
the micro-translation $r\in \mathcal{T}$ being a cluster point of the
sequence $(\mathcal{R}(\frac{a}{\varepsilon }))_{\varepsilon >0}$.

\section{Derivation of the limiting problem}

Our aim in this section is to pass to the limit in (\ref{2.1})-(\ref{2.8})
as $\varepsilon \rightarrow 0$. Before we can do this, we need a few
preliminary results.

\subsection{Preliminaries}

Let the functions $A_{i}$, $B_{i}$ and $\rho _{j}$ be as in Section 1.
Assuming (\textbf{A3}) (see Section 1), we get that 
\begin{equation}
A_{i},B_{i}\in \mathcal{C}(\overline{Q};L_{\text{per}}^{2}(Y\times \mathcal{T%
})^{N^{2}});\ \ \rho _{j}\in \mathcal{C}(\overline{\Omega };\mathcal{C}_{%
\text{per}}(Y))  \label{4.1}
\end{equation}%
\begin{equation}
(\chi _{Y_{2}},\chi _{Z_{2}})\in L_{\text{per}}^{2}(Y)\times L_{\text{per}%
}^{2}(Z)\text{ with }\left\langle \chi _{Y_{2}}\right\rangle >0\text{ and }%
\left\langle \chi _{Z_{2}}\right\rangle >0  \label{4.2}
\end{equation}%
where $\left\langle \chi _{Y_{2}}\right\rangle =\int_{Y_{2}}dy$ and $%
\left\langle \chi _{Z_{2}}\right\rangle =\int_{Z_{2}}dz$. Recalling that $%
\chi _{j}^{\varepsilon }$ ($j=1,2$) stands for the characteristic function
of the open set $\Omega _{j}^{\varepsilon }$, it comes from (\ref{4.2}) that 
$\chi _{j}^{\varepsilon }\rightarrow \chi _{j}$ reit. in $L^{2}(\Omega )$%
-weak, where $\chi _{2}(y,z)=\chi _{Y_{2}}(y)+\left( 1-\chi
_{Y_{2}}(y)\right) \chi _{Z_{2}}(z)$ and $\chi _{1}(y,z)=1-\chi
_{2}(y,z)\equiv \left( 1-\chi _{Y_{2}}(y)\right) \left( 1-\chi
_{Z_{2}}(z)\right) $. This stems from the fact that $\chi _{2}^{\varepsilon
}(x)=\chi _{Y_{2}}\left( \frac{x}{\varepsilon }\right) +\left( 1-\chi
_{Y_{2}}\left( \frac{x}{\varepsilon }\right) \right) \chi _{Z_{2}}\left( 
\frac{x}{\varepsilon ^{2}}\right) $ and $\chi _{1}^{\varepsilon }(x)=1-\chi
_{2}^{\varepsilon }(x)$ for $x\in \Omega $.

The following result holds true; see e.g. \cite{NG1, ACAP} for its proof.

\begin{lemma}
\label{l4.1}Let $(u_{\varepsilon })_{\varepsilon >0}\subset L^{p}(Q)$ be
such that $u_{\varepsilon }\rightarrow u_{0}$ reit. in $L^{p}(Q)$-weak. Then 
$\chi _{j}^{\varepsilon }u_{\varepsilon }\rightarrow \chi _{j}u_{0}$ reit.
in $L^{p}(Q)$-weak.
\end{lemma}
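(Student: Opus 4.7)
The plan is to present the lemma as a direct consequence of Corollary~\ref{c3.1} (in the time-dependent form whose validity is asserted at the end of Section~3) applied to the pair $u_\varepsilon$ and $v_\varepsilon:=\chi_j^\varepsilon$. With $p'=p/(p-1)$, the hypotheses of that corollary to be verified are that $u_\varepsilon\to u_0$ reit.\ in $L^p(Q)$-weak (given), that $\|\chi_j^\varepsilon\|_{L^\infty(Q)}\le 1$ (trivial), and that
\begin{equation*}
\chi_j^\varepsilon\longrightarrow \chi_j\quad\text{reit.\ in }L^{p'}(Q)\text{-strong}.
\end{equation*}
All the substantive work therefore lies in establishing this last convergence.

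To obtain the reiterated \emph{weak} convergence $\chi_j^\varepsilon\to\chi_j$ in $L^{p'}(Q)$, I would start from the reit.\ $L^2(\Omega)$-weak version already recorded in Section~4 and upgrade it by a standard density argument: approximate $\chi_{Y_2}$ and $\chi_{Z_2}$ in $L^q_{\text{per}}$ (any $1\le q<\infty$) by continuous periodic functions (possible since $\partial Y_1$ and $\partial Z_1$ are Lipschitz, so $\mathcal{C}_{\text{per}}$ is dense in $L^q_{\text{per}}$); insert these approximants into the explicit product formulas for $\chi_1$ and $\chi_2$ recorded in Remark~\ref{r2.2}; and pass to the limit in the test integral using the uniform bound $|\chi_j^\varepsilon|\le 1$ to control the error. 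Because $\chi_j^\varepsilon$ is independent of $t$, the convergence transfers automatically from $L^q(\Omega)$ to $L^q(Q)$ for every $1<q<\infty$; in particular it holds for $q=p'$.

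Strong convergence then reduces to matching of norms. Since $(\chi_j^\varepsilon)^{p'}=\chi_j^\varepsilon$ and $(\chi_j)^{p'}=\chi_j$, testing the weak convergence with $f\equiv 1$ yields
\begin{equation*}
\|\chi_j^\varepsilon\|_{L^{p'}(Q)}^{p'}=T\int_\Omega \chi_j^\varepsilon\,dx\longrightarrow T|\Omega|\iint_{Y\times Z}\chi_j(y,z)\,dydz=\|\chi_j\|_{L^{p'}(Q\times Y\times Z\times\mathcal{T})}^{p'},
\end{equation*}
so $\chi_j^\varepsilon\to\chi_j$ reit.\ in $L^{p'}(Q)$-strong. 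Invoking the time-dependent version of Corollary~\ref{c3.1} then delivers $\chi_j^\varepsilon u_\varepsilon\to\chi_j u_0$ reit.\ in $L^p(Q)$-weak, which is the statement of the lemma.

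The only nonroutine step is the density-based upgrade of the weak multiscale convergence in the second paragraph, and even this is standard once one keeps track of the product structure of $\chi_1$ and $\chi_2$ in Remark~\ref{r2.2}. Beyond that, the argument is bookkeeping and Corollary~\ref{c3.1} is used as a black box.
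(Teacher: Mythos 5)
Your proof is correct. The paper itself does not supply a proof of Lemma~\ref{l4.1}, referring instead to \cite{NG1, ACAP}; your route -- establishing that $\chi_j^\varepsilon\to\chi_j$ reit.\ strongly in $L^{p'}(Q)$ (using the uniform $L^\infty$-bound plus the fact that $(\chi_j^\varepsilon)^{p'}=\chi_j^\varepsilon$ and $(\chi_j)^{p'}=\chi_j$ for the norm-matching step) and then invoking the time-dependent form of Corollary~\ref{c3.1} with $v_\varepsilon=\chi_j^\varepsilon$ -- is exactly the standard argument used in those references and is consistent with the machinery the paper has already set up.
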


Let $h\in L^{1}(\Omega )$ and let $E$ be an ordinary sequence. For $j=1,2$,
the sequence $(h\rho _{j}^{\varepsilon })_{\varepsilon \in E}$ defined by 
\begin{equation*}
(h\rho _{j}^{\varepsilon })(x)=h(x)\rho _{j}\left( x,\frac{x}{\varepsilon }%
\right) \text{ for }x\in \Omega
\end{equation*}%
lies in $L^{1}(\Omega )$ and multiscale converges weakly in $L^{1}(\Omega )$
towards the function $h\odot \rho _{j}\in L^{1}(\Omega ;\mathcal{C}_{\text{%
per}}(Y))$ defined by 
\begin{equation*}
(h\odot \rho _{j})(x,y)=h(x)\rho _{j}(x,y)\text{ for a.e. }(x,y)\in \Omega
\times Y;
\end{equation*}%
see e.g., \cite[Example 4.1]{Hom1}. Thus, 
\begin{eqnarray*}
&&\int_{\Omega }h(x)\left( \chi _{1}^{\varepsilon }\rho _{1}^{\varepsilon
}+\chi _{2}^{\varepsilon }\rho _{2}^{\varepsilon }\right) dx \\
&\rightarrow &\iiint_{\Omega \times Y\times Z}\left[ \chi _{1}(y,z)\rho
_{1}(x,y)+\chi _{2}(y,z)\rho _{2}(x,y)\right] h(x)dxdydz
\end{eqnarray*}%
so that, as $E\ni \varepsilon \rightarrow 0$, 
\begin{equation*}
\rho ^{\varepsilon }\rightarrow \iint_{Y\times Z}\left[ \chi _{1}(y,z)\rho
_{1}(x,y)+\chi _{2}(y,z)\rho _{2}(x,y)\right] dydz=\rho (x)\text{ in }%
L^{\infty }(\Omega )\text{-weak}\ast
\end{equation*}%
(recall that $\rho ^{\varepsilon }(x)=\chi _{1}(\frac{x}{\varepsilon },\frac{%
x}{\varepsilon ^{2}})\rho _{1}(x,\frac{x}{\varepsilon })+\chi _{2}(\frac{x}{%
\varepsilon },\frac{x}{\varepsilon ^{2}})\rho _{2}(x,\frac{x}{\varepsilon })$%
). But 
\begin{equation*}
\rho (x)\geq \Lambda ^{-1}\iint_{Y\times Z}\left[ \chi _{1}(y,z)+\chi
_{2}(y,z)\right] dydz=\Lambda ^{-1}>0.
\end{equation*}%
Thus Proposition \ref{p2.2} applies and there exist a subsequence $E^{\prime
}$ of $E$ and a function $\boldsymbol{u}_{0}\in L^{2}(Q)^{N}$ such that, as $%
E^{\prime }\ni \varepsilon \rightarrow 0$, 
\begin{equation}
\boldsymbol{u}^{\varepsilon }\rightarrow \boldsymbol{u}_{0}\text{ in }%
L^{2}(Q)^{N}\text{-strong.}  \label{4.3}
\end{equation}%
We can extract by a diagonal process a subsequence of $(\boldsymbol{u}%
^{\varepsilon })_{\varepsilon \in E^{\prime }}$ not relabeled, that weakly
converges to $\boldsymbol{u}_{0}$ in $L^{2}(0,T;H_{0}^{1}(\Omega )^{N})$, so
that $\boldsymbol{u}_{0}\in L^{2}(0,T;H_{0}^{1}(\Omega )^{N})$.

Here below and in the sequel we shall use the following notation. For $%
\boldsymbol{w}=(\boldsymbol{w}_{0},\boldsymbol{w}_{1},\boldsymbol{w}_{2})$
with $\boldsymbol{w}_{0}\in L^{2}(0,T;H_{0}^{1}(\Omega )^{N})$, $\boldsymbol{%
w}_{1}\in L^{2}(Q\times \mathcal{T};W_{\#}^{1,2}(Y)^{N})$ and $\boldsymbol{w}%
_{2}\in L^{2}(Q\times Y\times \mathcal{T};W_{\#}^{1,2}(Z)^{N})$, we set 
\begin{equation*}
\mathbb{D}\boldsymbol{w}=\nabla \boldsymbol{w}_{0}+\nabla _{y}\boldsymbol{w}%
_{1}+\nabla _{z}\boldsymbol{w}_{2}\equiv (\mathbb{D}_{j}\boldsymbol{w}%
)_{1\leq j\leq N}
\end{equation*}%
where $\mathbb{D}_{j}\boldsymbol{w}=(\mathbb{D}_{j}\boldsymbol{w}%
^{k})_{1\leq k\leq N}$ and $\mathbb{D}_{j}\boldsymbol{w}^{k}=\frac{\partial
w_{0}^{k}}{\partial x_{j}}+\frac{\partial w_{1}^{k}}{\partial y_{j}}+\frac{%
\partial w_{2}^{k}}{\partial z_{j}}$ for $\boldsymbol{w}_{l}=(w_{l}^{k})_{1%
\leq k\leq N}$, $l=0,1,2$.

Bearing this in mind, the following preliminary result holds.

\begin{proposition}
\label{p4.1}There exist a subsequence $E^{\prime }$ of $E$ and vector
functions $(\boldsymbol{u}_{1},\boldsymbol{v}_{1})\in L^{2}(Q\times \mathcal{%
T};W_{\#}^{1,2}(Y)^{N})^{2}$, $(\boldsymbol{u}_{2},\boldsymbol{v}_{2})\in
L^{2}(Q\times Y\times \mathcal{T};W_{\#}^{1,2}(Z)^{N})^{2}$\ such that, as $%
E^{\prime }\ni \varepsilon \rightarrow 0$, 
\begin{equation}
\chi _{1}^{\varepsilon }\nabla \boldsymbol{u}_{\varepsilon }\rightarrow \chi
_{1}\mathbb{D}\boldsymbol{u}\text{ reit. in }L^{2}(Q)^{N^{2}}\text{-weak}
\label{4.5}
\end{equation}%
and%
\begin{equation}
\chi _{2}^{\varepsilon }\nabla \boldsymbol{v}_{\varepsilon }\rightarrow \chi
_{2}\mathbb{D}\boldsymbol{v}\text{ reit. in }L^{2}(Q)^{N^{2}}\text{-weak }
\label{4.6}
\end{equation}%
where $\boldsymbol{u}_{0}$ is that function defined by \emph{(\ref{4.3})}, $%
\boldsymbol{u}=(\boldsymbol{u}_{0},\boldsymbol{u}_{1},\boldsymbol{u}_{2})$
and $\boldsymbol{v}=(\boldsymbol{u}_{0},\boldsymbol{v}_{1},\boldsymbol{v}%
_{2})$.
\end{proposition}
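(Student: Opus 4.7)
The strategy is to work with the glued velocity $\boldsymbol{u}^{\varepsilon }=\chi _{1}^{\varepsilon }\boldsymbol{u}_{\varepsilon }+\chi _{2}^{\varepsilon }\boldsymbol{v}_{\varepsilon }$ introduced in (\ref{2.21}), to which the time-dependent version of Theorem \ref{t3.2} applies directly, and then to restore the characteristic functions by means of Lemma \ref{l4.1}.

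First I would observe that the interface condition (\ref{2.5}) and the boundary conditions (\ref{2.7}) imply, just as in the proof of Lemma \ref{l0}, that $\boldsymbol{u}^{\varepsilon }(\cdot ,t)\in H_{0}^{1}(\Omega )^{N}$ for a.e.\ $t\in (0,T)$, with $\nabla \boldsymbol{u}^{\varepsilon }=\chi _{1}^{\varepsilon }\nabla \boldsymbol{u}_{\varepsilon }+\chi _{2}^{\varepsilon }\nabla \boldsymbol{v}_{\varepsilon }$. Estimates (\ref{2.14})--(\ref{2.15}) of Lemma \ref{l2.1} then provide a uniform bound of $(\boldsymbol{u}^{\varepsilon })$ in $L^{2}(0,T;H_{0}^{1}(\Omega )^{N})$.

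Next I would invoke the time-dependent version of Theorem \ref{t3.2}, whose validity is recorded at the end of Section 3. Up to a subsequence $E^{\prime }\subset E$, there exist $\boldsymbol{w}_{0}\in L^{2}(0,T;H_{0}^{1}(\Omega )^{N})$, $\boldsymbol{w}_{1}\in L^{2}(Q\times \mathcal{T};W_{\#}^{1,2}(Y)^{N})$ and $\boldsymbol{w}_{2}\in L^{2}(Q\times Y\times \mathcal{T};W_{\#}^{1,2}(Z)^{N})$ with $\boldsymbol{u}^{\varepsilon }\rightarrow \boldsymbol{w}_{0}$ in $L^{2}(0,T;H_{0}^{1}(\Omega )^{N})$-weak and $\nabla \boldsymbol{u}^{\varepsilon }\rightarrow \nabla \boldsymbol{w}_{0}+\nabla _{y}\boldsymbol{w}_{1}+\nabla _{z}\boldsymbol{w}_{2}$ reit.\ in $L^{2}(Q)^{N^{2}}$-weak. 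The identification $\boldsymbol{w}_{0}=\boldsymbol{u}_{0}$ is forced by uniqueness of the weak $L^{2}(Q)^{N}$-limit, since the strong convergence (\ref{4.3}) already provides one such limit.

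Finally, using $\chi _{1}^{\varepsilon }\chi _{2}^{\varepsilon }=0$, I would rewrite $\chi _{1}^{\varepsilon }\nabla \boldsymbol{u}^{\varepsilon }=\chi _{1}^{\varepsilon }\nabla \boldsymbol{u}_{\varepsilon }$ and $\chi _{2}^{\varepsilon }\nabla \boldsymbol{u}^{\varepsilon }=\chi _{2}^{\varepsilon }\nabla \boldsymbol{v}_{\varepsilon }$, then apply Lemma \ref{l4.1} componentwise to the reit.\ limit above. Setting $\boldsymbol{u}=\boldsymbol{v}=(\boldsymbol{u}_{0},\boldsymbol{w}_{1},\boldsymbol{w}_{2})$, i.e.\ $\boldsymbol{u}_{1}=\boldsymbol{v}_{1}=\boldsymbol{w}_{1}$ and $\boldsymbol{u}_{2}=\boldsymbol{v}_{2}=\boldsymbol{w}_{2}$, yields precisely (\ref{4.5}) and (\ref{4.6}). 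The main technical point I expect is the adaptation of Theorem \ref{t3.2} to the time-dependent framework with the two spatial scales $\varepsilon$ and $\varepsilon ^{2}$ coexisting with the single fast time scale $\varepsilon$; once that is granted, the separation of the two limit problems via Lemma \ref{l4.1} is routine.
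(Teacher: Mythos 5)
Your proposal is correct, and it takes a genuinely different route from the paper's.

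The paper does \emph{not} apply Theorem~\ref{t3.2} to the glued velocity. Instead it works with the zero-extended gradients $\widetilde{\nabla u_{\varepsilon}^{i}}=\chi_{1}^{\varepsilon}\nabla u_{\varepsilon}^{i}$ directly: it extracts a reiterated weak $L^{2}$-limit $w^{i}$, observes (via Lemma~\ref{l4.1}) that $w^{i}=\chi_{1}w^{i}$, and then tests against multiscale solenoidal fields $\Phi$ vanishing on $Y_{2}$ and on $Z_{2}$ to show that $w^{i}-\chi_{1}\nabla u_{0}^{i}$ is orthogonal to all such $\Phi$; Lemma~4.14 of~\cite{AB} (a decomposition lemma for multiscale limits of extended gradients on perforated cells) then produces $u_{1}^{i},u_{2}^{i}$ with $w^{i}=(\nabla u_{0}^{i}+\nabla_{y}u_{1}^{i}+\nabla_{z}u_{2}^{i})\chi_{1}$, and the same argument is repeated for $\boldsymbol{v}_{\varepsilon}$. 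Your argument sidesteps this entire perforated-domain machinery: it uses the observation from Lemma~\ref{l0} that $\boldsymbol{u}^{\varepsilon}\in L^{2}(0,T;H_{0}^{1}(\Omega)^{N})$ with no distributional jump on $\Gamma_{12}^{\varepsilon}$, which makes the time-dependent Theorem~\ref{t3.2} applicable as a black box, and then Lemma~\ref{l4.1} restores the factors $\chi_{j}$. This is shorter and conceptually cleaner, and it legitimately proves the proposition by taking $\boldsymbol{u}_{1}=\boldsymbol{v}_{1}$ and $\boldsymbol{u}_{2}=\boldsymbol{v}_{2}$ (the claim is purely existential, and since $\chi_{1}\chi_{2}=0$ the correctors only ever enter the sequel multiplied by the appropriate indicator, so the choice on the complementary cell regions is immaterial). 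What the paper's route ``buys'' is that it treats the two phases independently and therefore does not rely on continuity of the velocity across $\Gamma_{12}^{\varepsilon}$; your route is valid here precisely because condition~(\ref{2.5}) glues the two velocities into a single $H_{0}^{1}$ field, and the uniform bound in $L^{2}(0,T;H_{0}^{1})$ follows from~(\ref{2.14})--(\ref{2.15}) via Poincar\'{e}, as you note.

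One small remark: you should make explicit the identity $\chi_{1}^{\varepsilon}\nabla\boldsymbol{u}^{\varepsilon}=\chi_{1}^{\varepsilon}\nabla\boldsymbol{u}_{\varepsilon}$, which uses both $\nabla\boldsymbol{u}^{\varepsilon}=\chi_{1}^{\varepsilon}\nabla\boldsymbol{u}_{\varepsilon}+\chi_{2}^{\varepsilon}\nabla\boldsymbol{v}_{\varepsilon}$ and $\chi_{1}^{\varepsilon}\chi_{2}^{\varepsilon}=0$, $(\chi_{1}^{\varepsilon})^{2}=\chi_{1}^{\varepsilon}$; you do state the disjointness, so the step is present, just compressed.
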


\begin{proof}
We deduce from Lemma \ref{l4.1} and the convergence result (\ref{4.3}) that $%
\chi _{1}^{\varepsilon }\boldsymbol{u}_{\varepsilon }=\chi _{1}^{\varepsilon
}\boldsymbol{u}^{\varepsilon }\rightarrow \chi _{1}\boldsymbol{u}_{0}$ reit.
in $L^{2}(Q)^{N}$-weak as $E^{\prime }\ni \varepsilon \rightarrow 0$, $%
E^{\prime }$ being the same as in (\ref{4.3}). Let us start by showing (\ref%
{4.5}). The same arguments will \ suffice to verify (\ref{4.6}). To do this,
let us set $\boldsymbol{u}_{0}=(u_{0}^{i})_{1\leq i\leq N}$ and $\boldsymbol{%
u}_{\varepsilon }=(u_{\varepsilon }^{i})_{1\leq i\leq N}$. Fix $1\leq i\leq
N $ arbitrarily. If we denote by $\widetilde{\nabla u_{\varepsilon }^{i}}$
the zero-extension of $\nabla u_{\varepsilon }^{i}$ on the whole $\Omega $,
then $\widetilde{\nabla u_{\varepsilon }^{i}}=\chi _{1}^{\varepsilon }\nabla
u_{\varepsilon }^{i}$. Thus the sequence $(\widetilde{\nabla u_{\varepsilon
}^{i}})_{\varepsilon \in E^{\prime }}$ is bounded in $L^{2}(Q)^{N}$, and
hence there exist a subsequence of $E^{\prime }$ (that we still denote by $%
E^{\prime }$) and a function $w^{i}\in L^{2}(Q\times Y\times Z\times 
\mathcal{T})^{N}$ such that $\widetilde{\nabla u_{\varepsilon }^{i}}%
\rightarrow w^{i}$ reit. in $L^{2}(Q)^{N}$-weak as $E^{\prime }\ni
\varepsilon \rightarrow 0$. In view of Lemma \ref{l4.1}, we have $\chi
_{1}^{\varepsilon }\nabla u_{\varepsilon }^{i}\rightarrow \chi _{1}w^{i}$
reit. in $L^{2}(Q)^{N}$-weak as $E^{\prime }\ni \varepsilon \rightarrow 0$.
It follows that $w^{i}=\chi _{1}w^{i}$. Now, let $\Phi \in (\mathcal{C}%
_{0}^{\infty }(Q)\otimes \mathcal{C}_{\text{per}}^{\infty }(Y\times Z\times 
\mathcal{T}))^{N}$ be such that $\Div_{y}\Phi =\Div_{z}\Phi =0$ and $\Phi
(x,t,y,z,\tau )=0$ for $y\in Y_{2}$ or $z\in Z_{2}$ and $\Phi \cdot \nu
_{1}=0$. Then 
\begin{equation*}
\int_{Q}\chi _{1}^{\varepsilon }\nabla u_{\varepsilon }^{i}\cdot \Phi
^{\varepsilon }dxdt=-\int_{Q}\chi _{1}^{\varepsilon }u_{\varepsilon }^{i}(%
\Div\Phi )^{\varepsilon }dxdt
\end{equation*}%
and letting $E^{\prime }\ni \varepsilon \rightarrow 0$ yields 
\begin{equation*}
\iiint_{Q\times Y\times Z\times \mathcal{T}}(w^{i}-\chi _{1}\nabla 
\boldsymbol{u}_{0})\cdot \Phi dxdtdydzd\tau =0
\end{equation*}%
for all $\Phi \in (\mathcal{C}_{0}^{\infty }(Q)\otimes \mathcal{C}_{\text{per%
}}^{\infty }(Y\times Z\times \mathcal{T}))^{N}$ satisfying $\Div_{y}\Phi =%
\Div_{z}\Phi =0$ and $\Phi (x,t,y,z,\tau )=0$ for $y\in Y_{2}$ or $z\in
Z_{2} $. Lemma 4.14 of \cite{AB} entails the existence of $u_{1}^{i}\in
L^{2}(Q\times \mathcal{T};W_{\#}^{1,2}(Y))$ and $u_{2}^{i}\in L^{2}(Q\times
Y\times \mathcal{T};W_{\#}^{1,2}(Z))$ such that $w^{i}=(\nabla
u_{0}^{i}+\nabla _{y}u_{1}^{i}+\nabla _{z}u_{2}^{i})\chi _{1}$. Setting $%
\boldsymbol{u}_{1}=(u_{1}^{i})_{1\leq i\leq N}$ and $\boldsymbol{u}%
_{2}=(u_{2}^{i})_{1\leq i\leq N}$, (\ref{4.5}) holds true.
\end{proof}

\subsection{Homogenization result}

Assume that the functions $\boldsymbol{u}_{0},\boldsymbol{u}_{1},\boldsymbol{%
u}_{2},\boldsymbol{v}_{1}$ and $\boldsymbol{v}_{2}$ are as in Proposition %
\ref{p4.1}. Let $\psi _{0}=(\psi _{0}^{k})_{1\leq k\leq N}\in \mathcal{C}%
_{0}^{\infty }(Q)^{N}$, $\psi _{1}=(\psi _{1,k})_{1\leq k\leq N},\phi
_{1}=(\phi _{1,k})_{1\leq k\leq N}\in (\mathcal{C}_{0}^{\infty }(Q)\otimes 
\mathcal{C}_{\text{per}}^{\infty }(Y\times \mathcal{T}))^{N}$ and $\psi
_{2}=(\psi _{2,k})_{1\leq k\leq N},\phi _{2}=(\phi _{2,k})_{1\leq k\leq
N}\in (\mathcal{C}_{0}^{\infty }(Q)\otimes \mathcal{C}_{\text{per}}^{\infty
}(Y\times Z\times \mathcal{T}))^{N}$ be such that $\psi _{1}(x,t,y,\tau )=0$
for $y\in Y_{2}$, $\phi _{1}(x,t,y,\tau )=0$ for $y\in Y\backslash Y_{2}$, $%
\psi _{2}(x,t,y,z,\tau )=0$ for $y\in Y_{2}$ or $z\in Z_{2}$ and $\phi
_{2}(x,t,y,z,\tau )=0$ for $y\in Y\backslash Y_{2}$ and $z\in Z\backslash
Z_{2}$. Set $\Psi =(\psi _{0},\psi _{1},\psi _{2})$ and $\Phi =(\psi
_{0},\phi _{1},\phi _{2})$, and define $\Psi _{\varepsilon }=\psi
_{0}+\varepsilon \psi _{1}^{\varepsilon }+\varepsilon ^{2}\psi
_{2}^{\varepsilon }$ and $\Phi _{\varepsilon }=\psi _{0}+\varepsilon \phi
_{1}^{\varepsilon }+\varepsilon ^{2}\phi _{2}^{\varepsilon }$ by 
\begin{eqnarray*}
\Psi _{\varepsilon }(x,t) &=&\psi _{0}(x,t)+\varepsilon \psi _{1}\left( x,t,%
\frac{x}{\varepsilon },\frac{t}{\varepsilon }\right) +\varepsilon ^{2}\psi
_{2}\left( x,t,\frac{x}{\varepsilon },\frac{x}{\varepsilon ^{2}},\frac{t}{%
\varepsilon }\right) , \\
\Phi _{\varepsilon }(x,t) &=&\psi _{0}(x,t)+\varepsilon \phi _{1}\left( x,t,%
\frac{x}{\varepsilon },\frac{t}{\varepsilon }\right) +\varepsilon ^{2}\phi
_{2}\left( x,t,\frac{x}{\varepsilon },\frac{x}{\varepsilon ^{2}},\frac{t}{%
\varepsilon }\right) \\
\text{for }(x,t) &\in &Q.
\end{eqnarray*}%
Then $\Psi _{\varepsilon }$ and $\Phi _{\varepsilon }\in \mathcal{C}%
_{0}^{\infty }(Q)^{N}$ and further $\gamma _{1}^{\varepsilon }\Psi
_{\varepsilon }=\gamma _{2}^{\varepsilon }\Phi _{\varepsilon }$ on $\Gamma
_{12}^{\varepsilon }$ (we have in fact $\varepsilon \psi _{1}^{\varepsilon
}+\varepsilon ^{2}\psi _{2}^{\varepsilon }\in \mathcal{C}_{0}^{\infty
}(\Omega _{1}^{\varepsilon }\times (0,T))^{N}$ and $\varepsilon \phi
_{1}^{\varepsilon }+\varepsilon ^{2}\phi _{2}^{\varepsilon }\in \mathcal{C}%
_{0}^{\infty }(\Omega _{2}^{\varepsilon }\times (0,T))^{N}$) and $\gamma
_{1}^{\varepsilon }\Psi _{\varepsilon }=0$ on $\partial \Omega
_{1}^{\varepsilon }\cap \partial \Omega $, $\gamma _{2}^{\varepsilon }\Phi
_{\varepsilon }=0$ on $\partial \Omega _{2}^{\varepsilon }\cap \partial
\Omega $. Thus the vector-function $(\Psi _{\varepsilon },\Phi _{\varepsilon
})$ can be taken as a test function in the variational form of (\ref{2.1})-(%
\ref{2.8}): 
\begin{eqnarray}
&&-\int_{Q}\left( \chi _{1}^{\varepsilon }\rho _{1}^{\varepsilon }%
\boldsymbol{u}_{\varepsilon }\cdot \frac{\partial \Psi _{\varepsilon }}{%
\partial t}+\chi _{2}^{\varepsilon }\rho _{2}^{\varepsilon }\boldsymbol{v}%
_{\varepsilon }\cdot \frac{\partial \Phi _{\varepsilon }}{\partial t}\right)
dxdt  \label{4.7} \\
&&\ \ \ +\int_{Q}\chi _{1}^{\varepsilon }(A_{0}^{\varepsilon }\nabla 
\boldsymbol{u}_{\varepsilon }+A_{1}^{\varepsilon }\ast \nabla \boldsymbol{u}%
_{\varepsilon })\cdot \nabla \Psi _{\varepsilon }dxdt  \notag \\
&&\ \ \ \ \ +\int_{Q}\chi _{2}^{\varepsilon }(B_{0}^{\varepsilon }\nabla 
\boldsymbol{v}_{\varepsilon }+B_{1}^{\varepsilon }\ast \nabla \boldsymbol{v}%
_{\varepsilon })\cdot \nabla \Phi _{\varepsilon }dxdt  \notag \\
&&\ \ \ \ \ \ \ \ -\int_{Q}(\chi _{1}^{\varepsilon }p_{\varepsilon }\Div\Psi
_{\varepsilon }+\chi _{2}^{\varepsilon }q_{\varepsilon }\Div\Phi
_{\varepsilon })dxdt  \notag \\
&=&\int_{Q}(\chi _{1}^{\varepsilon }\rho _{1}^{\varepsilon }f_{1}\cdot \Psi
_{1}^{\varepsilon }+\chi _{2}^{\varepsilon }\rho _{2}^{\varepsilon
}f_{2}\cdot \Phi _{\varepsilon })dxdt.  \notag
\end{eqnarray}%
Our aim is to pass to the limit in (\ref{4.7}) as $E^{\prime }\ni
\varepsilon \rightarrow 0$ ($E^{\prime }$ being as in Proposition \ref{p4.1}%
). Considering the first integral term in the left-hand side of (\ref{4.7}),
we have 
\begin{eqnarray*}
&&\int_{Q}\left( \chi _{1}^{\varepsilon }\rho _{1}^{\varepsilon }\boldsymbol{%
u}_{\varepsilon }\cdot \frac{\partial \Psi _{\varepsilon }}{\partial t}+\chi
_{2}^{\varepsilon }\rho _{2}^{\varepsilon }\boldsymbol{v}_{\varepsilon
}\cdot \frac{\partial \Phi _{\varepsilon }}{\partial t}\right) dxdt \\
&\rightarrow &\int_{Q}\left( \iint_{Y\times Z}(\chi _{1}\rho _{1}+\chi
_{2}\rho _{2})dydz\right) \boldsymbol{u}_{0}\cdot \frac{\partial \psi _{0}}{%
\partial t}dxdt.
\end{eqnarray*}%
This stems from (\ref{4.3}) associated to Lemma \ref{l4.1}. Next, it is easy
to see that 
\begin{eqnarray*}
\nabla \Psi _{\varepsilon } &\rightarrow &\mathbb{D}\Psi \text{ reit. in }%
L^{2}(Q)^{N^{2}}\text{-strong,} \\
\nabla \Phi _{\varepsilon } &\rightarrow &\mathbb{D}\Phi \text{ reit. in }%
L^{2}(Q)^{N^{2}}\text{-strong.}
\end{eqnarray*}%
Appealing to (\ref{4.5})-(\ref{4.6}), it follows from a suitable statement
of Corollary \ref{c3.1} that 
\begin{equation*}
\chi _{1}^{\varepsilon }\nabla \boldsymbol{u}_{\varepsilon }\cdot \nabla
\Psi _{\varepsilon }\rightarrow \chi _{1}\mathbb{D}\boldsymbol{u}\cdot 
\mathbb{D}\Psi \text{ reit. in }L^{2}(Q)^{N^{2}}\text{-weak}
\end{equation*}%
and 
\begin{equation*}
\chi _{2}^{\varepsilon }\nabla \boldsymbol{v}_{\varepsilon }\cdot \nabla
\Phi _{\varepsilon }\rightarrow \chi _{2}\mathbb{D}\boldsymbol{v}\cdot 
\mathbb{D}\Phi \text{ reit. in }L^{2}(Q)^{N^{2}}\text{-weak.}
\end{equation*}%
Thus using $A_{0}$ and $B_{0}$ as test functions entails 
\begin{eqnarray*}
&&\int_{Q}(\chi _{1}^{\varepsilon }A_{0}^{\varepsilon }\nabla \boldsymbol{u}%
_{\varepsilon }\cdot \nabla \Psi _{\varepsilon }+\chi _{2}^{\varepsilon
}B_{0}^{\varepsilon }\nabla \boldsymbol{v}_{\varepsilon }\cdot \nabla \Phi
_{\varepsilon })dxdt \\
&\rightarrow &\iiint_{Q\times Y\times Z\times \mathcal{T}}(\chi _{1}A_{0}%
\mathbb{D}\boldsymbol{u}\cdot \mathbb{D}\Psi +\chi _{2}B_{0}\mathbb{D}%
\boldsymbol{v}\cdot \mathbb{D}\Phi )dxdtdydzd\tau .
\end{eqnarray*}

Now, for the terms involving convolution, it is a fact that $%
A_{1}^{\varepsilon }\rightarrow A_{1}$ reit. in $L^{1}(Q)^{N^{2}}$-strong.
Thus, owing to Theorem \ref{t3.5} we get 
\begin{equation*}
A_{1}^{\varepsilon }\ast \chi _{1}^{\varepsilon }\nabla \boldsymbol{u}%
_{\varepsilon }\rightarrow A_{1}\ast \ast \chi _{1}\mathbb{D}\boldsymbol{u}%
\text{ reit. in }L^{2}(Q)^{N^{2}}\text{-weak.}
\end{equation*}%
We also have 
\begin{equation*}
B_{1}^{\varepsilon }\ast \chi _{2}^{\varepsilon }\nabla \boldsymbol{v}%
_{\varepsilon }\rightarrow B_{1}\ast \ast \chi _{2}\mathbb{D}\boldsymbol{v}%
\text{ reit. in }L^{2}(Q)^{N^{2}}\text{-weak.}
\end{equation*}%
Therefore the same reasoning conducted before leads to 
\begin{eqnarray*}
&&\int_{Q}[\chi _{1}^{\varepsilon }(A_{1}^{\varepsilon }\ast \nabla 
\boldsymbol{u}_{\varepsilon })\cdot \nabla \Psi _{\varepsilon }+\chi
_{2}^{\varepsilon }(B_{1}^{\varepsilon }\ast \nabla \boldsymbol{v}%
_{\varepsilon })\cdot \nabla \Phi _{\varepsilon }]dxdt \\
&\rightarrow &\iiint_{Q\times Y\times Z\times \mathcal{T}}[\chi
_{1}(A_{1}\ast \ast \mathbb{D}\boldsymbol{u})\cdot \mathbb{D}\Psi +\chi
_{2}(B_{1}\ast \ast \mathbb{D}\boldsymbol{v})\cdot \mathbb{D}\Phi
]dxdtdydzd\tau .
\end{eqnarray*}%
Now, dealing with the terms with pressure, we have 
\begin{eqnarray*}
&&\int_{Q}(\chi _{1}^{\varepsilon }p_{\varepsilon }\Div\Psi _{\varepsilon
}+\chi _{2}^{\varepsilon }q_{\varepsilon }\Div\Phi _{\varepsilon })dxdt \\
&&\qquad =\int_{Q}\pi _{\varepsilon }\Div\psi _{0}dxdt+\int_{Q}\left( \chi
_{1}^{\varepsilon }p_{\varepsilon }(\Div_{y}\psi _{1})^{\varepsilon }+\chi
_{2}^{\varepsilon }q_{\varepsilon }(\Div_{y}\phi _{1})^{\varepsilon }\right)
dxdt \\
&&\ \ \ \ \ \ \ \ \ \ \ +\int_{Q}\left( \chi _{1}^{\varepsilon
}p_{\varepsilon }(\Div_{z}\psi _{2})^{\varepsilon }+\chi _{2}^{\varepsilon
}q_{\varepsilon }(\Div_{z}\phi _{2})^{\varepsilon }\right) dxdt \\
&&\qquad \quad \ +\varepsilon \int_{Q}\left( \chi _{1}^{\varepsilon
}p_{\varepsilon }(\Div\psi _{1})^{\varepsilon }+\chi _{2}^{\varepsilon
}q_{\varepsilon }(\Div\phi _{1})^{\varepsilon }\right) dxdt \\
&&\ \ \ \ \ \ \ \ \ \ \ \ \ \ +\varepsilon ^{2}\int_{Q}\left( \chi
_{1}^{\varepsilon }p_{\varepsilon }(\Div\psi _{2})^{\varepsilon }+\chi
_{2}^{\varepsilon }q_{\varepsilon }(\Div\phi _{2})^{\varepsilon }\right) dxdt
\end{eqnarray*}%
where $\Div_{y}$ (resp. $\Div_{z}$) stands for the divergence operator in $%
\mathbb{R}_{y}^{N}$ (resp. $\mathbb{R}_{z}^{N}$), and $(\Div_{y}\psi
_{1})^{\varepsilon }$ and $(\Div_{y}\phi _{1})^{\varepsilon }$ are defined
for $(x,t)\in Q$ by 
\begin{equation*}
(\Div_{y}\psi _{1})^{\varepsilon }(x,t)=(\Div_{y}\psi _{1})\left( x,t,\frac{x%
}{\varepsilon },\frac{t}{\varepsilon }\right) \text{, }(\Div_{y}\phi
_{1})^{\varepsilon }(x,t)=(\Div_{y}\phi _{1})\left( x,t,\frac{x}{\varepsilon 
},\frac{t}{\varepsilon }\right) ,
\end{equation*}
and where 
\begin{equation}
\pi _{\varepsilon }=\chi _{1}^{\varepsilon }p_{\varepsilon }+\chi
_{2}^{\varepsilon }q_{\varepsilon }  \label{4.19}
\end{equation}%
is the global pressure. By virtue of (\ref{2.16}), the sequence $(\pi
_{\varepsilon })_{\varepsilon \in E^{\prime }}$ is bounded in $L^{2}(Q)$, so
that there exist $p\in L^{2}(Q)$, $p_{1},q_{1}\in L^{2}(Q\times Y\times
Z\times \mathcal{T})$ such that, up to a subsequence of $E^{\prime }$, $\pi
_{\varepsilon }\rightarrow p$ in $L^{2}(Q)$-weak, $\chi _{1}^{\varepsilon
}p_{\varepsilon }\rightarrow p_{1}$ reit. in $L^{2}(Q)$-weak and $\chi
_{2}^{\varepsilon }q_{\varepsilon }\rightarrow q_{1}$ reit. in $L^{2}(Q)$%
-weak. Thus 
\begin{eqnarray*}
&&\int_{Q}(\chi _{1}^{\varepsilon }p_{\varepsilon }\Div\Psi _{\varepsilon
}+\chi _{2}^{\varepsilon }q_{\varepsilon }\Div\Phi _{\varepsilon })dxdt \\
&\rightarrow &\int_{Q}p\Div\psi _{0}dxdt+\iiint_{Q\times Y\times Z\times 
\mathcal{T}}\left( \chi _{1}p_{1}\Div_{y}\psi _{1}+\chi _{2}q_{1}\Div%
_{y}\phi _{1}\right) dxdtdydzd\tau \\
&&\ \ \ \ \ \ \ \ \ \ \ \ \ \ \ \ \ +\iiint_{Q\times Y\times Z\times 
\mathcal{T}}\left( \chi _{1}p_{1}\Div_{z}\psi _{2}+\chi _{2}q_{1}\Div%
_{z}\phi _{2}\right) dxdtdydzd\tau .
\end{eqnarray*}%
Finally 
\begin{equation*}
\int_{Q}(\chi _{1}^{\varepsilon }\rho _{1}^{\varepsilon }f_{1}\cdot \Psi
_{1}^{\varepsilon }+\chi _{2}^{\varepsilon }\rho _{2}^{\varepsilon
}f_{2}\cdot \Phi _{\varepsilon })dxdt\rightarrow \int_{Q}\left(
\iint_{Y\times Z}(\chi _{1}\rho _{1}f_{1}+\chi _{2}\rho
_{2}f_{2})dydz\right) \cdot \psi _{0}dxdt.
\end{equation*}%
In order to formulate the result that we have just proved, we note that $\Div%
\boldsymbol{u}^{\varepsilon }=0$ in $\Omega $ since $\Div\boldsymbol{u}%
_{\varepsilon }=0$ in $\Omega _{1}^{\varepsilon }$ and $\Div\boldsymbol{v}%
_{\varepsilon }=0$ in $\Omega _{2}^{\varepsilon }$. This implies that $\Div%
\boldsymbol{u}_{0}=0$ in $\Omega $, $\Div_{y}\boldsymbol{u}_{1}=0$ in $%
Y\backslash Y_{2}$, $\Div_{z}\boldsymbol{u}_{2}=0$ if $y\notin Y_{2}$ and $%
z\notin Z_{2}$, $\Div_{y}\boldsymbol{v}_{1}=0$ in $Y_{2}$ and $\Div_{z}%
\boldsymbol{v}_{2}=0$ if $y\in Y_{2}$ or $z\in Z_{2}$. So we set 
\begin{equation*}
\begin{array}{l}
\mathbb{F}^{1}=\{(\boldsymbol{u}_{0},\boldsymbol{u}_{1},v_{1},u_{2},v_{2})%
\in F^{1}:\Div\boldsymbol{u}_{0}=0\text{ in }\Omega \text{, }\Div_{y}%
\boldsymbol{u}_{1}=0\text{ in }Y\backslash Y_{2}\text{, } \\ 
\Div_{y}\boldsymbol{v}_{1}=0\text{ in }Y_{2}\text{, }\Div_{z}\boldsymbol{u}%
_{2}=0\text{ in }Y\times Z\backslash (Y_{2}\times Z_{2})\text{ and }\Div_{z}%
\boldsymbol{v}_{2}=0\text{ in } \\ 
\ \ \ \ \ \ \ \ \ \ \ \ \ \ \ \ \ \ \ \ \ \ \ \ \ \ \ \ \ \ \ \ \ \ \ \ \ \
\ \ \ \ \ \ \ \ \ \ \ \ \ \ \ \ \ \ \ \{(y,z)\in Y\times Z:y\in Y_{2}\text{
or }z\in Z_{2}\}\};%
\end{array}%
\end{equation*}%
\begin{equation}
\rho (x)=\iint_{Y\times Z}[\chi _{1}(y,z)\rho _{1}(x,y)+\chi _{2}(y,z)\rho
_{2}(x,y)]dydz,\ \ x\in \overline{\Omega }  \label{4.00}
\end{equation}%
and 
\begin{equation*}
\mathbf{f}(x,t)=\iint_{Y\times Z}(\chi _{1}(y,z)\rho
_{1}(x,y)f_{1}(x,t)+\chi _{2}(y,z)\rho _{2}(x,y)f_{2}(x,t))dydz,\ \text{a.e. 
}(x,t)\in Q
\end{equation*}%
where 
\begin{equation*}
F^{1}=L^{2}(0,T;H_{0}^{1}(\Omega )^{N})\times \lbrack L^{2}(Q\times \mathcal{%
T};W_{\#}^{1,2}(Y)]^{2}\times \lbrack L^{2}(Q\times Y\times \mathcal{T}%
;W_{\#}^{1,2}(Z)]^{2}
\end{equation*}%
and 
\begin{equation*}
\mathcal{F}^{\infty }=\mathcal{C}_{0}^{\infty }(Q)^{N}\times \lbrack (%
\mathcal{C}_{0}^{\infty }(Q)\otimes \mathcal{C}_{\text{per}}^{\infty
}(Y\times \mathcal{T}))^{N}]^{2}\times \lbrack (\mathcal{C}_{0}^{\infty
}(Q)\otimes \mathcal{C}_{\text{per}}^{\infty }(Y\times Z\times \mathcal{T}%
))^{N}]^{2}
\end{equation*}%
its smooth counterpart. We have that $\rho \in \mathcal{C}(\overline{\Omega }%
)$ and $\mathbf{f}\in L^{2}(Q)^{N}$. We have just proved the following
result.

\begin{theorem}
\label{t4.1}The vector function $(\boldsymbol{u}_{0},\boldsymbol{u}%
_{1},v_{1},u_{2},v_{2})\in \mathbb{F}^{1}$ solves the variational problem 
\begin{equation}
\left\{ 
\begin{array}{l}
-\int_{Q}\rho \boldsymbol{u}_{0}\cdot \frac{\partial \boldsymbol{\psi }_{0}}{%
\partial t}dxdt+\iiint_{Q\times Y\times Z\times \mathcal{T}}\chi _{1}[A_{0}%
\mathbb{D}\boldsymbol{u}+A_{1}\ast \ast \mathbb{D}\boldsymbol{u}]\cdot 
\mathbb{D}\Psi dxdtdydzd\tau \\ 
\\ 
\ \ +\iiint_{Q\times Y\times Z\times \mathcal{T}}\chi _{2}[B_{0}\mathbb{D}%
\boldsymbol{v}+B_{1}\ast \ast \mathbb{D}\boldsymbol{v}]\cdot \mathbb{D}\Phi
dxdtdydzd\tau \\ 
\\ 
-\int_{Q}p\Div\boldsymbol{\psi }_{0}dxdt-\iiint_{Q\times Y\times Z\times 
\mathcal{T}}\left( \chi _{1}p_{1}\Div_{y}\psi _{1}+\chi _{2}q_{1}\Div%
_{y}\phi _{1}\right) dxdtdydzd\tau \\ 
\\ 
-\iiint_{Q\times Y\times Z\times \mathcal{T}}\left( \chi _{1}p_{1}\Div%
_{z}\psi _{2}+\chi _{2}q_{1}\Div_{z}\phi _{2}\right) dxdtdydzd\tau =\int_{Q}%
\mathbf{f}\cdot \psi _{0}dxdt \\ 
\\ 
\text{for all }(\psi _{0},\psi _{1},\phi _{1},\psi _{2},\phi _{2})\in 
\mathcal{F}^{\infty }\text{ with }\psi _{1}=0\text{ for }y\in Y_{2}\text{,}
\\ 
\\ 
\phi _{1}=0\text{ for }y\in Y\backslash Y_{2}\text{, }\psi _{2}=0\text{ for }%
y\in Y_{2}\text{ or }z\in Z_{2}\text{ and }\phi _{2}=0\text{ for }y\notin
Y_{2} \\ 
\\ 
\text{and }z\notin Z_{2}.%
\end{array}%
\right.  \label{4.8}
\end{equation}
\end{theorem}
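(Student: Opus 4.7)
The plan is to substitute the admissible test pair $(\Psi_\varepsilon,\Phi_\varepsilon)$ into the weak formulation \eqref{4.7} of \eqref{2.1}--\eqref{2.8} and pass to the limit $E'\ni\varepsilon\to 0$ term by term, then collect the identity obtained and verify the divergence constraints defining $\mathbb{F}^1$. First I would check admissibility: the support hypotheses on $\psi_1,\phi_1,\psi_2,\phi_2$ force $\varepsilon\psi_1^\varepsilon+\varepsilon^2\psi_2^\varepsilon$ to be supported in $\Omega_1^\varepsilon\times(0,T)$ and $\varepsilon\phi_1^\varepsilon+\varepsilon^2\phi_2^\varepsilon$ in $\Omega_2^\varepsilon\times(0,T)$, so that $\Psi_\varepsilon=\Phi_\varepsilon=\psi_0$ on $\Gamma_{12}^\varepsilon$ and both vanish on $\partial\Omega$; hence $(\Psi_\varepsilon,\Phi_\varepsilon)$ is an admissible test pair in the variational form of \eqref{2.1}--\eqref{2.8}.

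Next I would treat the terms of \eqref{4.7} one at a time. For the inertial term, integration by parts in $t$ together with the strong limit \eqref{4.3} of the global velocity $\boldsymbol{u}^\varepsilon$ and the weak-$\ast$ limit $\rho^\varepsilon\to\rho$ from \eqref{4.00} produces $-\int_Q\rho\boldsymbol{u}_0\cdot\partial_t\psi_0\,dxdt$. For the local stress terms, Proposition \ref{p4.1} yields the reit.-weak convergences $\chi_j^\varepsilon\nabla\boldsymbol{u}_\varepsilon\to\chi_1\mathbb{D}\boldsymbol{u}$ and $\chi_2^\varepsilon\nabla\boldsymbol{v}_\varepsilon\to\chi_2\mathbb{D}\boldsymbol{v}$, which I would pair against the reit.-strong limits $\nabla\Psi_\varepsilon\to\mathbb{D}\Psi$ and $\nabla\Phi_\varepsilon\to\mathbb{D}\Phi$ via Corollary \ref{c3.1}, using the reit.-strong convergence of $A_0^\varepsilon,B_0^\varepsilon$ implied by \eqref{4.1}. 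The heart of the argument is the memory term: I would apply the time-dependent version of Theorem \ref{t3.5} to conclude that $A_1^\varepsilon\ast(\chi_1^\varepsilon\nabla\boldsymbol{u}_\varepsilon)\to A_1\ast\ast(\chi_1\mathbb{D}\boldsymbol{u})$ reit.-weakly in $L^2(Q)^{N^2}$, and then pair with $\mathbb{D}\Psi$ once again through Corollary \ref{c3.1}; the analogous recipe handles the $B_1$-contribution.

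For the pressure terms I would expand $\Div\Psi_\varepsilon$ and $\Div\Phi_\varepsilon$ in powers of $\varepsilon$, keeping the $O(1)$, $\varepsilon^{-1}\cdot\varepsilon$ (i.e.\ $\Div_y$), and $\varepsilon^{-1}\cdot\varepsilon^2/\varepsilon$ (i.e.\ $\Div_z$) contributions and discarding the $O(\varepsilon)$, $O(\varepsilon^2)$ remainders; the $L^2(Q)$-weak limits $\pi_\varepsilon\to p$, $\chi_1^\varepsilon p_\varepsilon\to p_1$ reit., and $\chi_2^\varepsilon q_\varepsilon\to q_1$ reit., extracted from \eqref{2.16}, then deliver the three pressure integrals in \eqref{4.8}. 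The forcing term converges by the same reasoning as the inertial one, yielding $\int_Q\mathbf{f}\cdot\psi_0\,dxdt$. Finally, I would establish membership in $\mathbb{F}^1$ by testing $\Div\boldsymbol{u}_\varepsilon=0$ in $\Omega_1^\varepsilon$ and $\Div\boldsymbol{v}_\varepsilon=0$ in $\Omega_2^\varepsilon$ against oscillating correctors of the form $\varepsilon\varphi_1^\varepsilon+\varepsilon^2\varphi_2^\varepsilon$ with appropriate supports in $Y\setminus Y_2$, $Y_2$, $Y\times Z\setminus(Y_2\times Z_2)$, etc., and passing to the reiterated-weak limit.

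The main obstacle is the nonlocal-in-time term: Theorem \ref{t3.5} is stated in a purely spatial setting, and its transposition to the time-dependent reiterated framework, although announced in the paper, must be executed carefully so that the triple convolution $\ast\ast$ correctly encodes the joint $(y,z,\tau)$-oscillations of both the kernel and the memory-integrand. Once that step is in place, everything else reduces to routine bookkeeping with Proposition \ref{p4.1}, Corollary \ref{c3.1}, and Lemma \ref{l4.1}.
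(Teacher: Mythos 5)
Your proposal reproduces the paper's proof: substitute the oscillating test pair $(\Psi_\varepsilon,\Phi_\varepsilon)$ into the weak formulation \eqref{4.7}, pass to the limit term by term using the strong convergence \eqref{4.3}, the corrector convergences of Proposition \ref{p4.1}, Lemma \ref{l4.1}, Corollary \ref{c3.1}, and the time-dependent convolution result (Theorem \ref{t3.5} mutatis mutandis) for the memory terms, extract the pressure limits from \eqref{2.16}, and obtain the divergence constraints defining $\mathbb{F}^1$ from $\Div\boldsymbol{u}^\varepsilon=0$. The caveat you raise about transposing Theorem \ref{t3.5} to the time-dependent reiterated setting (where $\ast\ast$ should be a convolution in the pair $(t,\tau)$ rather than a spatial triple convolution) is a real subtlety, but the paper handles it at exactly the same ``mutatis mutandis'' level, so your route is the paper's route.
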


Our next purpose is to find the equation satisfied by the function $%
\boldsymbol{u}_{0}$. To do so, we need to construct the \emph{effective
homogenized viscosity tensor}. Before we can do that, let us however recall
that the functions $\chi _{1}$ and $\chi _{2}$ are expressed as follows: 
\begin{equation}
\begin{array}{l}
\chi _{2}(y,z)=\chi _{Y_{2}}(y)+(1-\chi _{Y_{2}}(y))\chi _{Z_{2}}(z) \\ 
\\ 
\chi _{1}(y,z)=1-\chi _{2}(y,z)=(1-\chi _{Y_{2}}(y))(1-\chi _{Z_{2}}(z))%
\text{ for }(y,z)\in Y\times Z.%
\end{array}
\label{4.0}
\end{equation}

With this in mind, if in (\ref{4.8}) we choose consecutively the functions $%
(\psi _{0},\psi _{1},\phi _{1},\psi _{2},\phi _{2})\in \mathcal{F}^{\infty }$
such that: 1) $\psi _{1}=\phi _{1}=\psi _{2}=\phi _{2}=0$, 2) $\psi
_{0}=\phi _{1}=\psi _{2}=\phi _{2}=0$, 3) $\psi _{0}=\psi _{1}=\phi
_{1}=\phi _{2}=0$ and 4) $\psi _{0}=\psi _{1}=\phi _{1}=\psi _{2}=0$, then
we get the system consisting of problems (\ref{4.9})-(\ref{4.13}) below: 
\begin{equation}
\begin{array}{l}
-\int_{Q}\rho \boldsymbol{u}_{0}\cdot \frac{\partial \boldsymbol{\psi }_{0}}{%
\partial t}dxdt+\iiint_{Q\times Y\times Z\times \mathcal{T}}\chi _{1}\left(
A_{0}\mathbb{D}\boldsymbol{u}+A_{1}\ast \ast \mathbb{D}\boldsymbol{u}\right)
\cdot \nabla \psi _{0}dxdtdydzd\tau \\ 
\\ 
+\iiint_{Q\times Y\times Z\times \mathcal{T}}\chi _{2}\left( B_{0}\mathbb{D}%
\boldsymbol{v}+B_{1}\ast \ast \mathbb{D}\boldsymbol{v}\right) \cdot \nabla
\phi _{0}dxdtdydzd\tau -\int_{Q}p\Div\boldsymbol{\psi }_{0}dxdt \\ 
\\ 
\ \ \ =\int_{Q}\mathbf{f}\cdot \psi _{0}dxdt\text{ for all }\psi _{0}\in 
\mathcal{C}_{0}^{\infty }(Q)^{N},%
\end{array}
\label{4.9}
\end{equation}%
\medskip 
\begin{equation}
\left\{ 
\begin{array}{l}
\iiint_{Q\times Y\times Z\times \mathcal{T}}\chi _{1}\left( A_{0}\mathbb{D}%
\boldsymbol{u}+A_{1}\ast \ast \mathbb{D}\boldsymbol{u}\right) \cdot \nabla
_{y}\psi _{1}dxdtdydzd\tau \\ 
\\ 
-\iiint_{Q\times Y\times Z\times \mathcal{T}}\chi _{1}p_{1}\Div_{y}\psi
_{1}dxdtdydzd\tau =0 \\ 
\\ 
\text{for all }\psi _{1}\in (\mathcal{C}_{0}^{\infty }(Q)\otimes \mathcal{C}%
_{\text{per}}^{\infty }(Y\times \mathcal{T}))^{N}\text{ with }\psi _{1}=0%
\text{ for }y\in Y_{2},%
\end{array}%
\right.  \label{4.10}
\end{equation}%
\medskip 
\begin{equation}
\left\{ 
\begin{array}{l}
\iiint_{Q\times Y\times Z\times \mathcal{T}}\chi _{1}\left( A_{0}\mathbb{D}%
\boldsymbol{u}+A_{1}\ast \ast \mathbb{D}\boldsymbol{u}\right) \cdot \nabla
_{z}\psi _{2}dxdtdydzd\tau \\ 
\\ 
-\iiint_{Q\times Y\times Z\times \mathcal{T}}\chi _{1}p_{1}\Div_{z}\psi
_{2}dxdtdydzd\tau =0 \\ 
\\ 
\text{for all }\psi _{2}\in (\mathcal{C}_{0}^{\infty }(Q)\otimes \mathcal{C}%
_{\text{per}}^{\infty }(Y\times Z\times \mathcal{T}))^{N}\text{ with }\psi
_{2}=0 \\ 
\\ 
\text{for }y\in Y_{2}\text{ or }z\in Z_{2}\text{,}%
\end{array}%
\right.  \label{4.11}
\end{equation}%
\medskip 
\begin{equation}
\left\{ 
\begin{array}{l}
\iiint_{Q\times Y\times Z\times \mathcal{T}}\chi _{2}\left( B_{0}\mathbb{D}%
\boldsymbol{v}+B_{1}\ast \ast \mathbb{D}\boldsymbol{v}\right) \cdot \nabla
_{y}\phi _{1}dxdtdydzd\tau \\ 
\\ 
-\iiint_{Q\times Y\times Z\times \mathcal{T}}\chi _{2}q_{1}\Div_{y}\phi
_{1}dxdtdydzd\tau =0 \\ 
\\ 
\text{for all }\phi _{1}\in (\mathcal{C}_{0}^{\infty }(Q)\otimes \mathcal{C}%
_{\text{per}}^{\infty }(Y\times \mathcal{T}))^{N}\text{ with }\phi _{1}=0%
\text{ for }y\in Y\backslash Y_{2},%
\end{array}%
\right.  \label{4.12}
\end{equation}%
and 
\begin{equation}
\left\{ 
\begin{array}{l}
\iiint_{Q\times Y\times Z\times \mathcal{T}}\chi _{2}\left( B_{0}\mathbb{D}%
\boldsymbol{u}+B_{1}\ast \ast \mathbb{D}\boldsymbol{u}\right) \cdot \nabla
_{z}\phi _{2}dxdtdydzd\tau \\ 
\\ 
-\iiint_{Q\times Y\times Z\times \mathcal{T}}\chi _{2}q_{1}\Div_{z}\phi
_{2}dxdtdydzd\tau =0 \\ 
\\ 
\text{for all }\phi _{2}\in (\mathcal{C}_{0}^{\infty }(Q)\otimes \mathcal{C}%
_{\text{per}}^{\infty }(Y\times Z\times \mathcal{T}))^{N}\text{ with }\phi
_{2}=0 \\ 
\\ 
\text{for }y\notin Y_{2}\text{ and }z\notin Z_{2}\text{.}%
\end{array}%
\right.  \label{4.13}
\end{equation}%
Conversely, since (\ref{4.9})-(\ref{4.13}) are made of linear equations,
summing them up we get (\ref{4.8}). Thus, (\ref{4.8}) is equivalent to (\ref%
{4.9})-(\ref{4.13}).

This being so, let us observe that the problems (\ref{4.10}) (resp. (\ref%
{4.11})) and (\ref{4.12}) (resp. (\ref{4.13})) are very similar, so that the
analysis that will be made for the couple (\ref{4.10})-(\ref{4.11}), will be
exactly the same for the couple (\ref{4.12})-(\ref{4.13}), and will
therefore be omitted for the latter couple. With this in mind, let us first
and foremost deal with (\ref{4.11}). If in (\ref{4.11}) we choose $\psi
_{2}(x,t,y,z,\tau )=\varphi (x,t)\theta (y)\boldsymbol{w}(z)\chi (\tau )$
with $\varphi \in \mathcal{C}_{0}^{\infty }(Q)$, $\theta \in \mathcal{C}_{%
\text{per}}^{\infty }(Y)$, $\boldsymbol{w}\in \mathcal{C}_{\text{per}%
}^{\infty }(Z)^{N}$, $\chi \in \mathcal{C}_{\text{per}}^{\infty }(\mathcal{T}%
)$ and $\theta =0$ in $Y_{2}$ or $\boldsymbol{w}=0$ in $Z_{2}$, then (\ref%
{4.11}) becomes (owing to (\ref{4.0})) 
\begin{equation*}
\iint_{Y\times Z}(1-\chi _{Y_{2}}(y))(1-\chi _{Z_{2}}(z))[\left( A_{0}%
\mathbb{D}\boldsymbol{u}+A_{1}\ast \ast \mathbb{D}\boldsymbol{u}\right)
\cdot \nabla _{z}\boldsymbol{w}-p_{1}\Div_{z}\boldsymbol{w}]\theta dydz=0
\end{equation*}%
or equivalently, 
\begin{equation}
\int_{Z\backslash Z_{2}}\left( A_{0}\mathbb{D}\boldsymbol{u}+A_{1}\ast \ast 
\mathbb{D}\boldsymbol{u}\right) \cdot \nabla _{z}\boldsymbol{w}%
dz-\int_{Z\backslash Z_{2}}p_{1}\Div_{z}\boldsymbol{w}dz=0\text{ a.e. in }%
Q\times (Y\backslash Y_{2})\times \mathcal{T}.  \label{4.14}
\end{equation}%
This being so, let $\mathcal{V}_{2,\text{div}_{z}}=\{\psi \in \mathcal{C}_{%
\text{per}}^{\infty }(Z)^{N}:\Div_{z}\psi =0$ in $Z\backslash Z_{2}$ and $%
\psi =0$ in $Z_{2}\}$, and define the space $\mathbb{B}_{\text{per}%
}^{1,2}(Z\backslash Z_{2})$ to be the strong closure in $W_{\text{per}%
}^{1,2}(Z)^{N}$ of $\mathcal{V}_{2,\text{div}_{z}}$. Next, let $\xi \in 
\mathbb{R}^{N^{2}}$ and consider the variational cell problem for $%
\boldsymbol{u}_{2}$: 
\begin{equation}
\left\{ 
\begin{array}{l}
\text{Find }u^{2}(\xi )\in \mathbb{B}_{\text{per}}^{1,2}(Z\backslash Z_{2})%
\text{ such that} \\ 
\\ 
\int_{Z\backslash Z_{2}}\left( A_{0}[\xi +\nabla _{z}u^{2}(\xi )]+A_{1}\ast
\ast (\xi +\nabla _{z}u^{2}(\xi ))\right) \cdot \nabla _{z}\boldsymbol{w}dz=0
\\ 
\\ 
\text{for all }\boldsymbol{w}\in \mathcal{V}_{2,\text{div}_{z}}\text{.}%
\end{array}%
\right.  \label{4.15}
\end{equation}%
Then (\ref{4.15}) is the equivalent version of (\ref{4.14}) but with test
functions $\boldsymbol{w}$ taken in $\mathcal{V}_{2,\text{div}_{z}}$. In
view of the properties of the matrices $A_{0}$ and $A_{1}$, if we proceed
exactly as in the proof of Theorem \ref{t2.1}, then we infer from \cite[%
Theorem 3.2]{Orlik} the existence of $u^{2}(\xi )$ solution (\ref{4.15})
which is unique up to an additive constant. On the other hand, if in (\ref%
{4.15}) we choose $\xi =\nabla \boldsymbol{u}_{0}(x,t)+\nabla _{y}%
\boldsymbol{u}_{1}(x,t,y,\tau )$ for a fixed $(x,t,y,\tau )\in Q\times
(Y\backslash Y_{2})\times \mathcal{T}$ and compare the resulting equation
with (\ref{4.14}) (for test functions taken in $\mathcal{V}_{2,\text{div}%
_{z}}$), then we get from the uniqueness argument that $\boldsymbol{u}%
_{2}=u^{2}(\nabla \boldsymbol{u}_{0}+\nabla _{y}\boldsymbol{u}_{1})$, where
the right-hand side of the preceding equality stands for the function $%
(x,t,y,\tau )\mapsto u^{2}(\nabla \boldsymbol{u}_{0}(x,t)+\nabla _{y}%
\boldsymbol{u}_{1}(x,t,y,\tau ))$ from $Q\times (Y\backslash Y_{2})\times 
\mathcal{T}$ into $\mathbb{B}_{\text{per}}^{1,2}(Z\backslash Z_{2})$.

Let us now consider the variational problem for (\ref{4.10}). If we define
the matrices $C_{0}$ and $C_{1}$ by setting (for $\xi \in \mathbb{R}^{N^{2}}$%
) 
\begin{equation*}
C_{0}\xi =\int_{Z}(1-\chi _{Z_{2}}(z))A_{0}(\xi +\nabla _{z}u^{2}(\xi ))dz
\end{equation*}%
and 
\begin{eqnarray*}
C_{1}\xi &=&\int_{Z}(1-\chi _{Z_{2}}(z))(A_{1}\ast \ast (\xi +\nabla
_{z}u^{2}(\xi )))dz \\
&\equiv &A_{1}\ast \ast \int_{Z}(1-\chi _{Z_{2}}(z))(\xi +\nabla
_{z}u^{2}(\xi ))dz
\end{eqnarray*}%
for a.e. $(x,t,y,\tau )\in Q\times \mathbb{R}_{y,\tau }^{N+1}$, then we see
that $\boldsymbol{u}_{1}(x,t,\cdot ,\tau )$ is the solution to the equation 
\begin{equation*}
\left\{ 
\begin{array}{l}
\int_{Y}(1-\chi _{Y_{2}}(y))[C_{0}[\nabla \boldsymbol{u}_{0}+\nabla _{y}%
\boldsymbol{u}_{1}]+C_{1}\ast \ast (\nabla \boldsymbol{u}_{0}+\nabla _{y}%
\boldsymbol{u}_{1}]\cdot \nabla _{y}\boldsymbol{w}dy=0 \\ 
\\ 
\text{for all }\boldsymbol{w}\in \mathcal{V}_{1,\text{div}_{y}}=\{\psi \in 
\mathcal{C}_{\text{per}}^{\infty }(Y)^{N}:\Div_{z}\psi =0\text{ in }%
Y\backslash Y_{2}\text{ and }\psi =0\text{ in }Y_{2}\}.%
\end{array}%
\right.
\end{equation*}%
So, by fixing once again $\xi \in \mathbb{R}^{N^{2}}$, a similar study
conducted for (\ref{4.10}) reveals that the cell problem 
\begin{equation*}
\left\{ 
\begin{array}{l}
\text{Find }u^{1}(\xi )\in \mathbb{B}_{\text{per}}^{1,2}(Y\backslash Y_{2})%
\text{ such that} \\ 
\\ 
\int_{Y\backslash Y_{2}}\left( C_{0}[\xi +\nabla _{y}u^{1}(\xi )]+C_{1}\ast
\ast (\xi +\nabla _{y}u^{1}(\xi ))\right) \cdot \nabla _{y}\boldsymbol{w}dz=0
\\ 
\\ 
\text{for all }\boldsymbol{w}\in \mathcal{V}_{1,\text{div}_{y}}%
\end{array}%
\right.
\end{equation*}%
possesses a unique solution in $\mathbb{B}_{\text{per}}^{1,2}(Y\backslash
Y_{2})$ (the strong closure in $W_{\text{per}}^{1,2}(Y)^{N}$ of $\mathcal{V}%
_{1,\text{div}_{y}}$) up to a constant. One also obtains that $\boldsymbol{u}%
_{1}=u^{1}(\nabla \boldsymbol{u}_{0})$ in $Q\times \mathcal{T}$. We also
set, for $\xi \in \mathbb{R}^{N^{2}}$ and $(x,t)\in Q$, 
\begin{equation}
D_{0}\xi =\int_{Y\times \mathcal{T}}(1-\chi _{Y_{2}}(y))C_{0}[\xi +\nabla
_{y}u^{1}(\xi )]dyd\tau ,  \label{4.16}
\end{equation}%
\begin{equation}
D_{1}\xi =\int_{Y\times \mathcal{T}}(1-\chi _{Y_{2}}(y))C_{1}[\xi +\nabla
_{y}u^{1}(\xi )]dyd\tau .  \label{4.17}
\end{equation}%
Then we easily see that, for $(x,t)\in Q$, 
\begin{equation*}
D_{0}\xi =\iint_{Y\times Z\times \mathcal{T}}\chi _{1}A_{0}[\xi +\nabla
_{y}u^{1}(\xi )]+\nabla _{z}u^{2}(\xi +\nabla _{y}u^{1}(\xi ))dydzd\tau
\end{equation*}%
and 
\begin{equation*}
D_{1}\xi =\iint_{Y\times Z\times \mathcal{T}}\chi _{1}\left\{ A_{1}\ast \ast
(\xi +\nabla _{y}u^{1}(\xi ))+\nabla _{z}u^{2}(\xi +\nabla _{y}u^{1}(\xi
))\right\} dydzd\tau .
\end{equation*}%
Similar arguments used for (\ref{4.12}) and (\ref{4.13}) lead to the
existence of unique $v^{1}(\xi )$ and $v^{2}(\xi )$ (for $\xi \in \mathbb{R}%
^{N^{2}}$), solutions to the cell problems for (\ref{4.12}) and (\ref{4.13})
respectively, so that $\boldsymbol{v}_{2}=v^{2}(\nabla \boldsymbol{v}%
_{0}+\nabla _{y}\boldsymbol{v}_{1})$ and $\boldsymbol{v}_{1}=v^{1}(\nabla 
\boldsymbol{v}_{0})$. We also define the corresponding homogenized matrices 
\begin{equation*}
E_{0}\xi =\iint_{Y\times Z\times \mathcal{T}}\chi _{2}B_{0}[\xi +\nabla
_{y}v^{1}(\xi )]+\nabla _{z}v^{2}(\xi +\nabla _{y}v^{1}(\xi ))dydzd\tau ,\
(x,t)\in Q
\end{equation*}%
and 
\begin{equation*}
E_{1}\xi =\iint_{Y\times Z\times \mathcal{T}}\chi _{2}[B_{1}\ast \ast (\xi
+\nabla _{y}v^{1}(\xi ))+\nabla _{z}v^{2}(\xi +\nabla _{y}v^{1}(\xi
))]dydzd\tau ,\ (x,t)\in Q.
\end{equation*}%
It is worth noticing that the matrices $D_{0}$ and $D_{1}$ (and the same for 
$E_{0}$ and $E_{1}$) are defined by $D_{0}=(d_{ij}^{0})_{1\leq i,j\leq N}$, $%
D_{1}=(d_{ij}^{1})_{1\leq i,j\leq N}$ where the $d_{ij}^{0}$ and $d_{ij}^{1}$
are obtained by choosing in (\ref{4.16}) and (\ref{4.17}) $\xi =(\delta
_{ij})_{1\leq i,j\leq N}$ (the identity matrix), $\delta _{ij}$ being the
Kronecker delta. Finally, set $\mathcal{A}_{0}=D_{0}+E_{0}$ and $\mathcal{A}%
_{1}=D_{1}+E_{1}$, that is, for any $\xi \in \mathbb{R}^{N^{2}}$, $\mathcal{A%
}_{0}\xi =D_{0}\xi +E_{0}\xi $ and $\mathcal{A}_{1}\xi =D_{1}\xi +E_{1}\xi $%
. Set also 
\begin{equation*}
m_{c}=\int_{Y_{2}}dy\text{ and }m_{p}=\int_{Z_{2}}dz.
\end{equation*}%
The positive constants $m_{c}$ and $m_{p}$ are the porosity of the crack and
pore spaces respectively. The function $\rho $ defined by (\ref{4.00}) is
the effective homogenized density while the matrices $\mathcal{A}_{0}$ and $%
\mathcal{A}_{1}$ are the effective homogenized elasticity tensors which
depend continuously on $(x,t)\in Q$ as seen in the next result whose easy
and classical proof is left to the reader.

\begin{proposition}
\label{p4.2}It holds that

\begin{itemize}
\item[(i)] $\mathcal{A}_{i}$ $(i=0,1)$ are symmetric and further $\mathcal{A}%
_{i}\in \mathcal{C}(Q)^{N^{2}}$;

\item[(ii)] $\mathcal{A}_{0}\lambda \cdot \lambda \geq \alpha \left\vert
\lambda \right\vert ^{2}$ for all $(x,t)\in Q$ and all $\lambda \in \mathbb{R%
}^{N}$, where $\alpha $ is the same as in assumption \emph{(\textbf{A1})};

\item[(iii)] $\rho \in \mathcal{C}(\overline{\Omega })$ and further $\Lambda
^{-1}\leq \rho (x)\leq \Lambda $ for all $x\in \overline{\Omega }$.
\end{itemize}
\end{proposition}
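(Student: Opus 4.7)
The plan is to dispatch the three items in order of increasing difficulty: first (iii), then (i), then (ii).

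For (iii), the explicit formula for $\rho$ together with the identity $\chi_1(y,z) + \chi_2(y,z) \equiv 1$ on $Y\times Z$ gives at once
\begin{equation*}
\Lambda^{-1} = \Lambda^{-1}\iint_{Y\times Z}(\chi_1+\chi_2)\,dydz \le \rho(x) \le \Lambda,
\end{equation*}
by (\textbf{A2}). For continuity on $\overline{\Omega}$ I would use (\textbf{A2})--(\textbf{A3}), which together yield $\rho_j\in \mathcal{C}(\overline{\Omega};\mathcal{C}_{\text{per}}(Y))$, so integration against the bounded cutoffs $\chi_1,\chi_2$ preserves continuity in $x$.

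For (i), the continuity of $\mathcal{A}_0,\mathcal{A}_1$ on $\overline{Q}$ reduces to continuous dependence of the cell problems on the parameters $(x,t)$. Since (\textbf{A1})--(\textbf{A3}) give $A_0,A_1,B_0,B_1\in \mathcal{C}(\overline{Q}; L^{\infty}_{\text{per}}(Y\times \mathcal{T})^{N\times N})$, the unique solutions $u^1(\xi),u^2(\xi),v^1(\xi),v^2(\xi)$ of the cell problems depend continuously on $(x,t)$ in the relevant Banach spaces, and that continuity transfers to their integral averages $D_i,E_i$ and hence to $\mathcal{A}_i=D_i+E_i$. For symmetry, I would test the cell problem defining $u^2(\xi)$ against $u^2(\eta)$ and vice-versa; exploiting the symmetry of $A_0$ and $A_1$ ensured by (\textbf{A1}) together with the fact that the Volterra convolution $A_1\ast\ast$ is symmetric as a bilinear form on $L^2(\mathcal{T})$, the usual swap yields $D_0\xi\cdot\eta=D_0\eta\cdot\xi$, and the same argument handles $D_1,E_0,E_1$, whence $\mathcal{A}_0$ and $\mathcal{A}_1$ are symmetric.

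For (ii), the coercivity is the real work. I would use the classical corrector plus self-testing argument adapted to the reiterated double-porosity framework. Define, for $\lambda\in \mathbb{R}^{N^2}$, the global corrector
\begin{equation*}
\widehat{U}(\lambda)=\lambda+\nabla_y u^1(\lambda)+\nabla_z u^2(\lambda+\nabla_y u^1(\lambda))
\end{equation*}
on the matrix cell, and its analogue $\widetilde{U}(\lambda)$ built from $v^1,v^2$ on the fluid cell. The definitions of $D_0,E_0$ yield
\begin{equation*}
\mathcal{A}_0\lambda\cdot\lambda=\iiint_{Y\times Z\times \mathcal{T}}\bigl[\chi_1 A_0\widehat{U}\cdot\lambda+\chi_2 B_0\widetilde{U}\cdot\lambda\bigr]\,dydzd\tau.
\end{equation*}
Testing the cell problems (\ref{4.10})--(\ref{4.13}) against $u^1(\lambda),u^2(\cdot),v^1(\lambda),v^2(\cdot)$ themselves (and using that these are orthogonality relations, so the $A_0$-part splits from the $A_1\ast\ast$-part on the $\lambda$-side) lets me replace $\lambda$ by $\widehat{U},\widetilde{U}$ respectively on the right. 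Coercivity of $A_0,B_0$ from (\textbf{A1}) then gives
\begin{equation*}
\mathcal{A}_0\lambda\cdot\lambda\ge \alpha\iiint_{Y\times Z\times \mathcal{T}}\bigl[\chi_1|\widehat{U}|^2+\chi_2|\widetilde{U}|^2\bigr]\,dydzd\tau.
\end{equation*}
I would close the estimate with the mean-value identity $\iiint[\chi_1\widehat{U}+\chi_2\widetilde{U}]=\lambda$, which follows from $\chi_1+\chi_2\equiv 1$ and the vanishing-mean condition encoded in $W^{1,2}_{\#}(Y)$ and $W^{1,2}_{\#}(Z)$, combined with Jensen's inequality applied to the probability measure $(\chi_1+\chi_2)\,dydzd\tau$.

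The main obstacle I expect is keeping the memory term $A_1\ast\ast$ (and its $B_1$ analogue) from polluting the coercivity bound. My expectation is that when the cell problems are tested by their own solutions, the convolutive cross-terms either cancel by the symmetry of $A_1$ in the $\tau$-variable or assemble into a nonnegative Volterra quadratic form that can be discarded, leaving only the strictly positive $A_0,B_0$ part; verifying this rigorously, which presumably is what the authors regard as ``classical'', is the one nontrivial calculation I would need to carry out in full.
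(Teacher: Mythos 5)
The paper calls the proof of this proposition ``easy and classical'' and leaves it to the reader, so there is no authorial argument to compare against. Your part (iii) is correct, and your route to the continuity in (i) (continuous dependence of the cell solutions on the parameters $(x,t)$, propagated through the averages) is the right idea. Your Jensen/mean-value closure in (ii), based on the identity $\iiint(\chi_1\widehat{U}+\chi_2\widetilde{U})\,dydzd\tau=\lambda$, is also sound as far as it goes.

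The genuine gap is in (ii) — and, by the same mechanism, in the symmetry claim in (i). You assert parenthetically that the cell orthogonality relations ``split the $A_0$-part from the $A_1\ast\ast$-part on the $\lambda$-side.'' They do not: the cell problem (\ref{4.15}) and its analogues force only the vanishing of the \emph{sum}
\[
\iiint\chi_1\bigl(A_0\widehat{U}+A_1\ast\ast\widehat{U}\bigr)\cdot(\nabla_y u^1+\nabla_z u^2)\,dydzd\tau=0,
\]
not of the $A_0$-term and the $A_1\ast\ast$-term separately. Carrying the self-test through correctly therefore yields
\[
D_0\lambda\cdot\lambda=\iiint\chi_1\,A_0\widehat{U}\cdot\widehat{U}\,dydzd\tau+\iiint\chi_1\,(A_1\ast\ast\widehat{U})\cdot(\nabla_y u^1+\nabla_z u^2)\,dydzd\tau,
\]
and analogously for $E_0$. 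The first integral is bounded below by $\alpha\iiint\chi_1|\widehat{U}|^2$, but the second is a one-sided Volterra cross-term whose sign is a priori indeterminate. Assumptions (\textbf{A1})--(\textbf{A3}) guarantee symmetry of $A_1,B_1$ and coercivity of $A_0,B_0$ only; they say nothing about the sign or positive-definiteness of the memory kernels, and the Volterra convolution is one-sided in $t$ and $\tau$, so neither the ``cancellation by $\tau$-symmetry'' nor the ``nonnegative Volterra quadratic form'' you contemplate follows from the standing hypotheses. The same residual obstructs the swap argument you sketch for symmetry: after the usual exchange one is left with $\iiint\chi_1\,(A_1\ast\ast\widehat{U}(\xi))\cdot\nabla u^2(\eta)$, whose symmetry in $(\xi,\eta)$ is not a consequence of the pointwise symmetry of $A_1$. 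Until these memory cross-terms are shown to vanish, to be sign-controlled, or to be absent from the correct definition of $\mathcal{A}_0$, the argument for (i)--(ii) is incomplete.
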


\bigskip Now we consider the anisotropic nonlocal Stokes system 
\begin{equation}
\left\{ 
\begin{array}{l}
\rho \frac{\partial \boldsymbol{u}_{0}}{\partial t}-\Div\left( \mathcal{A}%
_{0}\nabla \boldsymbol{u}_{0}+\int_{0}^{t}\mathcal{A}_{1}(x,t-\tau )\nabla 
\boldsymbol{u}_{0}(x,\tau )d\tau \right) +\nabla p=\mathbf{f}\text{ in }Q \\ 
\\ 
\ \ \ \ \ \ \ \ \ \ \ \ \ \ \ \ \ \ \ \ \ \ \ \ \ \ \ \ \ \ \ \ \ \ \ \ \ \
\ \ \ \ \ \Div\boldsymbol{u}_{0}=0\text{ in }Q \\ 
\\ 
\ \ \ \ \ \ \ \ \ \ \ \ \ \ \ \ \ \ \ \ \ \ \ \ \ \ \ \ \ \ \ \ \ \ \ 
\boldsymbol{u}_{0}=0\text{ on }\partial \Omega \times (0,T) \\ 
\\ 
\boldsymbol{u}_{0}(x,0)=(1-m_{c})(1-m_{p})\boldsymbol{u}%
^{0}(x)+(m_{c}+m_{p}(1-m_{c}))\boldsymbol{v}^{0}(x),\ x\in \Omega .%
\end{array}%
\right.  \label{4.18}
\end{equation}%
In (\ref{4.18}) the function $\boldsymbol{u}_{0}$ is the strong limit of the
global velocity field $\boldsymbol{u}^{\varepsilon }=\chi _{1}^{\varepsilon }%
\boldsymbol{u}_{\varepsilon }+\chi _{2}^{\varepsilon }\boldsymbol{v}%
_{\varepsilon }$ while $p$ is the weak limit of the global pressure $\pi
_{\varepsilon }=\chi _{1}^{\varepsilon }p_{\varepsilon }+\chi
_{2}^{\varepsilon }q_{\varepsilon }$. Moreover, since $\int_{\Omega
_{1}^{\varepsilon }}p_{\varepsilon }dx=\int_{\Omega _{2}^{\varepsilon
}}q_{\varepsilon }dx=0$, we have $\int_{\Omega }\pi _{\varepsilon }dx=0$, so
that $\int_{\Omega }pdx=0$.

Now, in view of (i)-(iii) in Proposition \ref{p4.2} and owing to the fact
that $\int_{\Omega }pdx=0$, we can argue as in the proof of Theorem \ref%
{t2.1} to show that Problem (\ref{4.18}) possesses a unique solution $(%
\boldsymbol{u}_{0},p)$ such that $\boldsymbol{u}_{0}\in
L^{2}(0,T;H_{0}^{1}(\Omega )^{N})$ and $p\in L^{2}(0,T;L^{2}(\Omega )/%
\mathbb{R})$ where $L^{2}(\Omega )/\mathbb{R}$ stands for the space of $v\in
L^{2}(\Omega )$ satisfying $\int_{\Omega }vdx=0$. We can therefore state the
main homogenization result.

\begin{theorem}
\label{t4.2}Assume \emph{(\textbf{A1})-(\textbf{A3})} hold. For any $%
\varepsilon >0$, let $\boldsymbol{u}_{\varepsilon }$ (resp. $\boldsymbol{v}%
_{\varepsilon }$), the velocity field of the fluid in $\Omega
_{1}^{\varepsilon }$ (resp. $\Omega _{2}^{\varepsilon }$) be given by the
system \emph{(\ref{2.1})-(\ref{2.8})}. Let $\pi _{\varepsilon }$ be the
global pressure given by \emph{(\ref{4.19})}. There exist $\boldsymbol{u}\in
L^{\infty }(0,T;L^{2}(\Omega )^{N})$ -- the velocity of the fluid in the
skeleton, $\boldsymbol{v}\in L^{\infty }(0,T;L^{2}(\Omega )^{N})$ -- the
velocity of the fluid in the pores and cracks system, and $p\in
L^{2}(0,T;L^{2}(\Omega )/\mathbb{R})$ such that, as $\varepsilon \rightarrow
0$, $\chi _{1}^{\varepsilon }\boldsymbol{u}_{\varepsilon }\rightarrow 
\boldsymbol{u}$ in $L^{2}(Q)^{N}$-weak, $\chi _{2}^{\varepsilon }\boldsymbol{%
v}_{\varepsilon }\rightarrow \boldsymbol{v}$ in $L^{2}(Q)^{N}$-weak and $\pi
_{\varepsilon }\rightarrow p$ in $L^{2}(Q)$-weak. Moreover $\boldsymbol{u}%
=(1-m_{c})(1-m_{p})\boldsymbol{u}_{0}$ and $\boldsymbol{v}=\boldsymbol{v}%
_{c}+\boldsymbol{v}_{p}$ where $\boldsymbol{v}_{c}=m_{c}\boldsymbol{u}_{0}$
is the velocity of the fluid in the crack space and $\boldsymbol{v}%
_{p}=(1-m_{c})m_{p}\boldsymbol{u}_{0}$ is the velocity of the fluid in the
pore space, and $m_{p}$ (resp. $m_{c}$) is the porosity of the pore (resp.
crack) space and $(\boldsymbol{u}_{0},p)$ is the unique solution to Problem 
\emph{(\ref{4.18})}.
\end{theorem}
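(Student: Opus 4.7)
The plan is to combine the variational identity of Theorem~\ref{t4.1}, the cell problems (\ref{4.10})--(\ref{4.13}), and the weak/strong limit identifications from Lemma~\ref{l4.1} and Proposition~\ref{p2.2}, and then close the argument by uniqueness for (\ref{4.18}).

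First I would exploit the uniform bounds of Lemma~\ref{l2.1} to extract a subsequence $E'\subset E$ along which $\pi_\varepsilon\to p$ in $L^2(Q)$-weak, $\chi_1^\varepsilon\boldsymbol{u}_\varepsilon\to\boldsymbol{u}$ and $\chi_2^\varepsilon\boldsymbol{v}_\varepsilon\to\boldsymbol{v}$ in $L^2(Q)^N$-weak, together with the strong limit $\boldsymbol{u}^\varepsilon\to\boldsymbol{u}_0$ in $L^2(Q)^N$ granted by Proposition~\ref{p2.2}. Since $\chi_1^\varepsilon\boldsymbol{u}_\varepsilon=\chi_1^\varepsilon\boldsymbol{u}^\varepsilon$, Lemma~\ref{l4.1} combined with (\ref{4.3}) identifies the weak $L^2(Q)^N$-limit as $\left(\iint_{Y\times Z}\chi_1\,dydz\right)\boldsymbol{u}_0=(1-m_c)(1-m_p)\boldsymbol{u}_0$, which gives the announced formula for $\boldsymbol{u}$. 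Applying the same reasoning to $\chi_2^\varepsilon\boldsymbol{v}_\varepsilon$ and exploiting the splitting $\chi_2=\chi_{Y_2}+(1-\chi_{Y_2})\chi_{Z_2}$ from (\ref{4.0}) separates the weak limit into the crack contribution $\boldsymbol{v}_c=m_c\boldsymbol{u}_0$ and the pore contribution $\boldsymbol{v}_p=(1-m_c)m_p\boldsymbol{u}_0$.

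To derive the homogenized equation (\ref{4.18}) I would eliminate the correctors $\boldsymbol{u}_1,\boldsymbol{u}_2,\boldsymbol{v}_1,\boldsymbol{v}_2$ from the macroscopic identity (\ref{4.9}) by inserting the cell-problem representations $\boldsymbol{u}_1=u^1(\nabla\boldsymbol{u}_0)$, $\boldsymbol{u}_2=u^2(\nabla\boldsymbol{u}_0+\nabla_y\boldsymbol{u}_1)$ and their analogues for $\boldsymbol{v}_j$. Upon integration in $Y\times Z\times\mathcal{T}$, the contributions weighted by $\chi_1$ collapse, by construction of the tensors in (\ref{4.16})--(\ref{4.17}), to $D_0\nabla\boldsymbol{u}_0+D_1\ast_t\nabla\boldsymbol{u}_0$, and those weighted by $\chi_2$ to $E_0\nabla\boldsymbol{u}_0+E_1\ast_t\nabla\boldsymbol{u}_0$, giving the effective law with $\mathcal{A}_i=D_i+E_i$. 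Since $\boldsymbol{u}_0$ is independent of $(y,z,\tau)$, the triple convolution $A_1\ast\ast\nabla\boldsymbol{u}_0$ obtained from Theorem~\ref{t3.5} (in its time-dependent form) reduces to the single time convolution $\int_0^t\mathcal{A}_1(x,t-\tau)\nabla\boldsymbol{u}_0(x,\tau)\,d\tau$ appearing in (\ref{4.18}). Taking $\psi_0$ divergence-free first delivers the momentum balance without pressure; dropping this constraint introduces $\nabla p$, while $\Div\boldsymbol{u}_0=0$ is inherited directly from $\Div\boldsymbol{u}^\varepsilon=0$.

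The initial datum is obtained by integrating the time derivative in (\ref{4.7}) by parts against a test function $\psi_0$ that does not vanish at $t=0$ and passing to the multiscale limit in the boundary contribution $\int_\Omega(\chi_1^\varepsilon\rho_1^\varepsilon\boldsymbol{u}^0+\chi_2^\varepsilon\rho_2^\varepsilon\boldsymbol{v}^0)\cdot\psi_0(x,0)\,dx$; the microscopic averages $\langle\chi_1\rangle=(1-m_c)(1-m_p)$ and $\langle\chi_2\rangle=m_c+(1-m_c)m_p$ then produce exactly the initial condition of (\ref{4.18}). Uniqueness of $(\boldsymbol{u}_0,p)$ in $L^2(0,T;H_0^1(\Omega)^N)\times L^2(0,T;L^2(\Omega)/\mathbb{R})$ for (\ref{4.18})---which follows from Proposition~\ref{p4.2} exactly as in the proof of Theorem~\ref{t2.1}---then promotes the subsequential convergences to convergence of the whole original sequence. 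The main obstacle I expect is the third paragraph: the bookkeeping required to match the triple convolution $\ast\ast$ of Theorem~\ref{t3.5} against the single-time convolution kernel $\mathcal{A}_1$ in (\ref{4.18}), tracking the micro-translations in $Y\times Z\times\mathcal{T}$ and their limits, must be done carefully so that the homogenized memory kernel emerges in precisely the stated form.
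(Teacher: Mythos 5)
Your proposal follows the same route as the paper: substitute the cell-problem representations $\boldsymbol{u}_1=u^1(\nabla\boldsymbol{u}_0)$, $\boldsymbol{u}_2=u^2(\nabla\boldsymbol{u}_0+\nabla_y\boldsymbol{u}_1)$ (and their $\boldsymbol{v}$-analogues) into the macroscopic identity (\ref{4.9}), thereby recognizing the effective tensors $\mathcal{A}_0=D_0+E_0$ and $\mathcal{A}_1=D_1+E_1$; invoke uniqueness of the solution to (\ref{4.18}) (argued as in Theorem~\ref{t2.1} from Proposition~\ref{p4.2}) to upgrade subsequential convergence of $(\boldsymbol{u}^\varepsilon,\pi_\varepsilon)$ to convergence of the whole sequence; and identify $\boldsymbol{u}$, $\boldsymbol{v}_c$, $\boldsymbol{v}_p$ from $\chi_j^\varepsilon\boldsymbol{u}^\varepsilon\to\chi_j\boldsymbol{u}_0$ (reiterated weak) via Lemma~\ref{l4.1}, the convergence (\ref{4.3}), and $\iint\chi_1=(1-m_c)(1-m_p)$, $\iint\chi_2=m_c+(1-m_c)m_p$, finishing with the $L^\infty(0,T;L^2)$ bound from (\ref{2.14}). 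This is precisely what the paper does.

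One caveat in the part you expand beyond the paper's "mere computations": passing to the multiscale limit in the boundary term of (\ref{4.7}) with $\psi_0(\cdot,0)\neq 0$ yields
\begin{equation*}
\rho(x)\,\boldsymbol{u}_0(x,0)=\Bigl(\iint_{Y\times Z}\chi_1\rho_1\,dydz\Bigr)\boldsymbol{u}^0(x)+\Bigl(\iint_{Y\times Z}\chi_2\rho_2\,dydz\Bigr)\boldsymbol{v}^0(x),
\end{equation*}
i.e.\ a density-weighted average, which reduces to the form $(1-m_c)(1-m_p)\boldsymbol{u}^0+(m_c+m_p(1-m_c))\boldsymbol{v}^0$ only when the weights cancel (e.g.\ $\rho_1\equiv\rho_2$, or more generally when $\iint\chi_j\rho_j\,dydz=\langle\chi_j\rangle\,\rho$). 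You should not assert that the multiscale limit \emph{produces exactly} the initial condition of (\ref{4.18}) without this reconciliation; the paper suppresses the point, but your explicit account must either impose the compatibility or carry the $\rho$-weighted datum.
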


\begin{proof}
First, if we substitute in (\ref{4.9}) $\boldsymbol{u}_{1}=u^{1}(\nabla 
\boldsymbol{u}_{0})$, $\boldsymbol{u}_{2}=u^{2}(\nabla \boldsymbol{u}%
_{0}+\nabla _{y}\boldsymbol{u}_{1})$, $\boldsymbol{v}_{1}=v^{1}(\nabla 
\boldsymbol{u}_{0})$ and $\boldsymbol{v}_{2}=v^{2}(\nabla \boldsymbol{u}%
_{0}+\nabla _{y}\boldsymbol{v}_{1})$, we get, after mere computations, the
variational formulation of (\ref{4.18}). Moreover, owing to the uniqueness
of the solution to (\ref{4.18}), we infer that the whole sequence $(%
\boldsymbol{u}^{\varepsilon },\pi _{\varepsilon })$ (where $\boldsymbol{u}%
^{\varepsilon }$ is the global velocity field defined by (\ref{2.21}))
converges as $\varepsilon \rightarrow 0$, in the following way: $\boldsymbol{%
u}^{\varepsilon }\rightarrow \boldsymbol{u}_{0}$ in $L^{2}(Q)^{N}$-strong
and $\pi _{\varepsilon }\rightarrow p$ in $L^{2}(Q)$-weak. Second, because
of both Lemma \ref{l4.1} and the convergence result (\ref{4.3}), we have $%
\chi _{1}^{\varepsilon }\boldsymbol{u}_{\varepsilon }=\chi _{1}^{\varepsilon
}\boldsymbol{u}^{\varepsilon }\rightarrow \chi _{1}\boldsymbol{u}_{0}$ reit.
in $L^{2}(Q)^{N}$-weak when $\varepsilon \rightarrow 0$, hence $\chi
_{1}^{\varepsilon }\boldsymbol{u}_{\varepsilon }\rightarrow \left(
\iint_{Y\times Z}\chi _{1}dydz\right) \boldsymbol{u}_{0}$ in $L^{2}(Q)^{N}$%
-weak, and 
\begin{equation*}
\iint_{Y\times Z}\chi _{1}dydz=(1-m_{c})(1-m_{p}).
\end{equation*}%
Also, as $\varepsilon \rightarrow 0$, $\chi _{2}^{\varepsilon }\boldsymbol{v}%
_{\varepsilon }=\chi _{2}^{\varepsilon }\boldsymbol{u}^{\varepsilon
}\rightarrow \chi _{2}\boldsymbol{u}_{0}$ reit. in $L^{2}(Q)^{N}$-weak,
hence $\chi _{2}^{\varepsilon }\boldsymbol{v}_{\varepsilon }\rightarrow
\left( \iint_{Y\times Z}\chi _{2}dydz\right) \boldsymbol{u}_{0}$ in $%
L^{2}(Q)^{N}$-weak, and 
\begin{equation*}
\iint_{Y\times Z}\chi _{2}dydz=m_{c}+(1-m_{c})m_{p}.
\end{equation*}%
We may therefore set $\boldsymbol{u}=(1-m_{c})(1-m_{p})\boldsymbol{u}_{0}$, $%
\boldsymbol{v}=\boldsymbol{v}_{c}+\boldsymbol{v}_{p}$ with $\boldsymbol{v}%
_{c}=m_{c}\boldsymbol{u}_{0}$ and $\boldsymbol{v}_{p}=(1-m_{c})m_{p}%
\boldsymbol{u}_{0}$. The fact that $\boldsymbol{u}$ and $\boldsymbol{v}$
belong to $L^{\infty }(0,T;L^{2}(\Omega )^{N})$ follows from both the
boundedness of the sequences $(\chi _{1}^{\varepsilon }\boldsymbol{u}%
_{\varepsilon })_{\varepsilon >0}$ and $(\chi _{2}^{\varepsilon }\boldsymbol{%
v}_{\varepsilon })_{\varepsilon >0}$ in $L^{\infty }(0,T;L^{2}(\Omega )^{N})$
(see Lemma \ref{l2.1}; see especially (\ref{2.14}) therein) and uniqueness
of the weak limit. This concludes the proof of the theorem.
\end{proof}

\begin{remark}
\label{r4.1}\emph{We see from the statement of Theorem \ref{t4.2} that the
limiting velocity in skeleton as well as in pores and cracks are both
proportional.}
\end{remark}

\end{document}